\definecolor{Myblue}{rgb}{0.0,0,0.9}
\definecolor{Mygreen}{rgb}{0.2,1,0}
\newcommand{\JC}[1]{{\color{black}#1}}
\newcommand{\I}[1]{{\color{black}#1}}
\newcommand{\IP}[1]{{\color{black}#1}}
\newcommand{\MIP}[1]{{\color{black}#1}}
\newcommand{\MI}[1]{{\color{black}#1}}
\newtheorem{thm}{Theorem}[section]
\newtheorem{lem}[thm]{Lemma}
\newtheorem{prop}[thm]{Proposition}
\newtheorem{cor}[thm]{Corollary}
\theoremstyle{definition}
\theoremstyle{definition}
\newtheorem{exmp}[thm]{Example}
\newtheorem{remark}[thm]{Remark}
\numberwithin{equation}{section}
\newcommand{\La}{\Lambda}
\newcommand{\lra}{\longrightarrow}
\newcommand{\benu}{\begin{enumerate}}
\newcommand{\enu}{\end{enumerate}}
\newcommand{\bema}{\left[\begin{array}}
\newcommand{\ema}{\end{array}\right]}
\renewcommand{\mod}{\operatorname{mod}}
\newcommand{\add}{\operatorname{add}}
\newcommand{\End}{\operatorname{End}}
\newcommand{\Hom}{\operatorname{Hom}}
\newcommand{\Ext}{\operatorname{Ext}}
\newcommand{\Ker}{\operatorname{Ker}}
\newcommand{\Der}{\operatorname{D^b}}
\newcommand{\HomD}{\operatorname{Hom}_{\Der(H)}}
\newcommand{\dual}{\mathrm{D}}
\newcommand{\rad}{\operatorname{rad}}
\newcommand{\soc}{\operatorname{soc}}
\newcommand{\ind}{\operatorname{ind}}
\newcommand{\gldim}{\operatorname{gldim}}
\newcommand{\ra}{\rightarrow}
\newcommand{\HVCenter}[1]{\setbox 0=\hbox{#1}%
        \dimen0=\wd0%
        \dimen1=\ht0%
        \divide\dimen0 by 2%
        \divide\dimen1 by 2%
        \hskip -\dimen0%
        \lower \dimen1%
        \box0%
        \hskip -\dimen0}
\newcommand{\HBCenter}[1]{\setbox 0=\hbox{#1}%
        \dimen0=\wd0%
        \dimen1=\ht0%
        \divide\dimen0 by 2%
        \hskip -\dimen0%
        \box0%
        \hskip -\dimen0}
\newcommand{\LTCenter}[1]{\setbox 0=\hbox{#1}%
        \dimen1=\ht0%
        \lower \dimen1%
        \box0%
        \hskip -\dimen0}
\newcommand{\HTCenter}[1]{\setbox 0=\hbox{#1}%
        \dimen0=\wd0%
        \dimen1=\ht0%
        \divide\dimen0 by 2%
        \hskip -\dimen0%
        \lower \dimen1%
        \box0%
        \hskip -\dimen0}
\newcommand{\RTCenter}[1]{\setbox 0=\hbox{#1}%
        \dimen0=\wd0%
        \dimen1=\ht0%
        \hskip -\dimen0%
        \lower \dimen1%
        \box0%
        \hskip -\dimen0}
\newcommand{\RBCenter}[1]{\setbox 0=\hbox{#1}%
        \dimen0=\wd0%
        \dimen1=\ht0%
        \hskip -\dimen0%
        \box0%
        \hskip -\dimen0}
\newcommand{\RVCenter}[1]{\setbox 0=\hbox{#1}%
        \dimen0=\wd0%
        \dimen1=\ht0%
        \divide\dimen1 by 2%
        \hskip -\dimen0%
        \lower \dimen1%
        \box0%
        \hskip -\dimen0}
\newcommand{\LVCenter}[1]{\setbox 0=\hbox{#1}%
        \dimen1=\ht0%
        \divide\dimen1 by 2%
        \lower \dimen1%
        \box0%
        \hskip -\dimen0}
\begin{document}
\sloppy 
\title{fundamental domains  of cluster categories inside module categories}

\author[Cappa]{Juan \'Angel Cappa}
\address{Juan \'Angel Cappa, Instituto de
    Matem\'atica, Universidad Nacional del Sur, 8000, Bah\'{\i}a
    Blanca, Argentina.}
\email{juancappa2010@gmail.com}

\author[Platzeck]{Mar\'{\i}a In\'es Platzeck}
\address{Mar\'{\i}a In\'es Platzeck, Instituto de
    Matem\'atica, Universidad Nacional del Sur, 8000, Bah\'{\i}a
    Blanca, Argentina.}
\email{platzeck@uns.edu.ar}

\author[Reiten]{Idun Reiten}
\address{Idun Reiten, Institutt for Matematiske Fag,
Norges Teknisk-Naturvitenskapelige Universitet,
N-7491 Trondheim
Norway}
\email{idunr@math.ntnu.no}

\begin{abstract}

Let $H$ be a finite dimensional hereditary algebra over an algebraically
closed field, and let $\mathcal{C}_{H}$ be the corresponding cluster
category. We give a description of the (standard) fundamental domain of $\mathcal{C}_{H}
$ in the bounded derived category $\mathcal{D}^{b}(H)$, and of the
cluster-tilting objects, in terms of the category $\mod\Gamma $\ of
finitely generated modules over a suitable tilted algebra $%
\Gamma .$
Furthermore, we apply this description to obtain (the quiver of) an arbitrary
cluster-tilted algebra.

\end{abstract}

\thanks{ J.~A.~Cappa and M.~I.~Platzeck  thankfully acknowledge partial support from CONICET, Argentina and
  from Universidad Nacional del Sur, Argentina. M.~I.~Platzeck is researcher from CONICET. I.~Reiten was supported by the project 196600/v30 from the Research Council of Norway and by a Humboldt Research Prize. }

\subjclass[2000]{Primary:
16G20, 
Secondary: 16G70, 
18E30 
}

\maketitle
\section{Introduction}
Let $k$ be an algebraically closed field and $Q$ a finite acyclic quiver with n vertices. Let $H=kQ$ be the associated path algebra. The cluster category $\mathcal C_H$ was introduced and investigated in \cite{BMRRT}, motivated by the cluster algebras of Fomin-Zelevinsky \cite{FZ1}. By definition we have $\mathcal C_H = {\rm D}^b(H)/\tau^{-1}[1]$, where $\tau$ denotes the AR-translation. An important class of objects are the cluster tilting objects $T$, wich are the objects $T$  with $\Ext^1_\MI{C_H}(T,T)=0$, and $T$ maximal with this property. They are shown to be exactly the objects induced by tilting objects over some path algebra $kQ'$ derived equivalent to $kQ$.

A crucial property of the cluster tilting objects $T=T_1 \oplus \cdots \oplus T_j$ where the $T_i$ are indecomposable, and $T_i $ is not isomorphic to $ T_{i'}$ for $i\neq i'$, is that $j=n$ and for each $i=1,\cdots ,n$ there is a unique indecomposable object $T_i^* $ not isomorphic to $     T_i$ in $\mathcal C_H$, such that $(T/T_i) \oplus T_i^*$ is a cluster tilting object. This is a more regular behavior than what we have for tilting modules (of projective dimension at  most one) over a finite dimensional algebra $A$. In general there is {\it at most one}
replacement for each indecomposable summand.

The maps in $\mathcal C_H$ are defined as follows, as usual for orbit categories. Choose the fundamental domain $\mathcal D$ of $\mathcal C_H$  inside ${\rm D}^b(H)$, whose indecomposable objects are the indecomposable $H$-modules, together with \MI{$ P_1[1], \dots , P_n[1]$, where the $P_j$ are the indecomposable projective $H$-modules}. Let $X$ and $Y$ be in $\mathcal D$. Then $ \Hom_{\mathcal C_H}(X,Y) = \bigoplus_{\in Z} \Hom_{{\rm D}^b(H)}(X,\MI{(\tau^{-1}[1])}^i(Y)) $.

In \cite{ABST} the authors considered the triangular matrix algebra $
\Lambda =
\begin{pmatrix}
H & 0 \\ 
DH & H%
\end{pmatrix}
$, where ${\rm D} =\Hom_k(-,k)$. They chose a fundamental domain for ${\mathcal C_H}$ inside the category $\mod \Lambda$ of finite dimensional $\MI{\Lambda}$-modules, by using the $H$-modules together with $\I{\ind} \,\tau^{-1}_\Lambda({ D}H) $. They established a bijection between cluster tilting objects in ${\mathcal C_H}$ and a certain class of tilting modules in $\mod\Lambda$, which was shown in \cite{ABST2}   to be all tilting modules (of projective dimension at most $1$).

The present paper is inspired by \cite{ABST}. Instead of using the algebra $\Lambda$ which normally has global dimension $3$, we use \MI{a} smaller triangular matrix algebra $\Gamma$ which has global dimension at most $2$, and is a tilted algebra. We obtain a similar connection between cluster tilting objects in $\mathcal C_H$ and tilting modules in $\mod\Gamma$ and give an alternative proof for the special property of complements mentioned above.  The projective injective modules play a crucial role here, as in \cite{ABST} .

If $T$ is a tilting $H$-module, a description of the quiver of $\End_{\mathcal C_H}(T)$, on the basis of the quiver of $\End_H(T)$, is given in \cite{ABS}  (See \cite{BR} for finite type). For each \MI{relation in a minimal set of} relations in $\add T$, an arrow is added in the opposite direction. We obtain a similar description for $T$ in the fundamental domain, but not necessarily being an $H$-module. Again the projective injective modules play an essential role. Now we  consider  relations where we allow factoring also through the projective injective modules, in addition to $\add T$. Then we obtain the same result about adding arrows in the opposite direction as before. When $T$ is a tilting $H$-module, then no maps in $\add T$ factor through projective injective modules.

We now describe the content section by section. In section 2 we give some preliminary results on describing the indecomposable $\Lambda$-modules of projective dimension at most $1$. In particular, we show that all predecessors of a module of projective dimension 1 have projective dimension at most $1$. In section $3$ we introduce the algebra $\Gamma$ which replaces $\Lambda$ in our work, starting with motivation on how to choose $\Gamma$ smallest possible, without losing essential information. We show that the indecomposable $\Gamma$-modules of projective dimension at most $1$ are exactly the  modules in the left part $\mathcal L_\Gamma$ of indecomposable modules where the predecessors have projective dimension at most $1$. Further, this class consists of the indecomposable modules in our fundamental domain, together with the indecomposable projective injective $\Gamma$-modules. In section $4$ we show how to describe the quiver of $\End_{\mathcal C_H}(T)$ for any $T$ in the fundamental domain.

\section{Duplicated algebras}
{ In this section we recall work from \cite{ABST} and improve the statement of the main theorem in \cite{ABST}.} {Throughout the paper we  assume that }
 $H$ is a {basic} hereditary  algebra \MI{over an algebraically closed field $k$} and $\Lambda $ \MIP{is} the duplicated algebra of $H$, \MIP {that is,} $
\Lambda =
\begin{pmatrix}
H & 0 \\ 
DH & H%
\end{pmatrix}
$.  {We denote by $\mod \Lambda$ the category of finitely generated left $\Lambda$-modules,} and we use the usual description of the left $\Lambda -$modules as triples $%
(X,Y,f)$, with \MIP{ $X, Y$ in $\mod H$ and} $f\in $ Hom$_{H}(DH\otimes _{H}X,Y)$ \MIP{(see {\cite{FGR}, or \cite{ARS}, III,2)}. }Then the full
subcategory of $\mod\Lambda $ generated by the modules of the form $(0,Y,0)$
is closed under predecessors and canonically isomorphic to $\mod H$. We
will use this isomorphism to identify $\mod H$ with the corresponding
full subcategory of $\mod \Lambda$ and give some alternative proofs. 
\JC{The opposite algebra $\Lambda ^{op}$ is isomorphic to the triangular matrix {algebra}
$\begin{pmatrix}H^{op} & 0 \\ DH & H^{op}
\end{pmatrix}$.
Under these identifications, the duality $\dual:\mod\Lambda\lra\mod\Lambda ^{op}$ is {given by} $\dual (X,Y,f)=(\dual Y,\dual X,\dual f)$, where $\dual f\in\Hom_{H^{op}}(\dual Y,\dual(\dual H\otimes_{H} X))\cong\Hom_{H^{op}}(\dual Y,\Hom_{H}(\dual H,\dual X))$
$
\cong\Hom_{H^{op}}(\dual Y\otimes_{H}\dual H,\dual X)\cong\Hom_{H^{op}}(\dual H\otimes_{H^{op}}\dual Y,\dual X)$.}

We recall (see {\cite{FGR} or \cite{ARS}}, {III, Proposition 2.5}) that the {indecomposable} projective $\Lambda$-modules are given by triples isomorphic to those of the form $(0,P,0)$ or $(P,D H\otimes _H P,1_{\dual H\otimes _H P })$, with $P$ {indecomposable} projective in $\mod H$. The former are the projective $H$-modules, and the latter are {projective-injective $\Lambda$-modules}. The remaining {indecomposable }injective $\Lambda$-modules are of the form $(I,0,0)$ with $I$ injective in $\mod H.$

We  denote by $pd_\Lambda M$ \MIP{and $id_\Lambda M$ the projective dimension and the injective dimension of the $\Lambda$-module $M$, respectively}. When $M$ is in $\mod H$ we have $pd_H M = pd_\Lambda M$, and for that reason  we will write just $pd\, M$.  
We  denote by $\rad X$ and $\soc X$ the radical and the socle of the $\Lambda$-module $X$, respectively.

Let $\ind\Lambda$ denote the full subcategory of mod$\Lambda$ where the objects are a chosen set of nonisomorphic indecomposable $\Lambda$-modules. 

We  denote by D$^b(H)$ the bounded derived category of $H$, by $\mathcal C_H$ the cluster category of $H$, and by $\tau$ the Auslander-Reiten translation in $\mod \Lambda$ or  D$^b(H)$. Note that the injective $H-$modules are not $\Lambda -$injective, so that $\tau_\Lambda ^{-1}(I_i)$ is indecomposable for each indecomposable $H$-module $I_i = I_0(S_i)$, where $S_i$ is a simple $H$-module. Then $ \{\tau_\Lambda ^{-1}(I_i)\}$ in $\mod \Lambda$ will play a similar role as 
$\{ P_i[1]\}$ in the derived category D$^b(H)$. In particular, 
$\add (\ind H\cup\{\tau_\Lambda ^{-1}\dual H\})\subseteq\mod\Lambda$ can be considered as a fundamental domain $\mathcal D_\I{\Lambda}$ inside mod $\Lambda$ of the cluster category  $\mathcal C_H$   (see \cite{ABST}).

We recall that given  $X,Y\in \ind \Lambda$, a {\it path from $Y$ to $X$} is a sequence of nonzero morphisms  $Y\overset{f_{0}}{\longrightarrow }X_{1}$ $
\overset{f_{1}}{\longrightarrow }X_{2}\longrightarrow ...\longrightarrow
X_{t}\overset{f_{t}}{\longrightarrow }X$, with the $X_i\in \ind\Lambda$. When such a path exists,  $Y$ is a {\it predecessor} of $X$, and $X$ is a {\it successor} of $Y$. The {\it left part}  $\mathcal L_\Lambda$ of $\mod\Lambda$,  defined in \cite{HRS}, is the full subcategory of $\ind\Lambda$ consisting of the modules whose predecessors have projective dimension at most $1$. That is, 
$\mathcal L_\Lambda =\{X\in\mathrm{ind}\Lambda\ |\ {\rm pd}\,Y\leq 1\ {\rm for \ any \
predecessor}\ Y \ {\rm of}\   X\}$. 
 The main result of \cite{ABST} is the following.
\begin{thm} \label{1} (a) The fundamental domain \I{$\mathcal D_\Lambda$} of $\mathcal C_H$ lies in $\add \mathcal L_\Lambda$, and the only other indecomposable  $\Lambda$-modules  in $\mathcal L_\Lambda$ are projective-injective.

(b) There is induced a bijection between cluster-tilting objects in $\mathcal C_H$ and tilting modules in $\mod \Lambda$ whose indecomposable non projective-injective summands lie in $\mathcal L_\Lambda$.
\end{thm}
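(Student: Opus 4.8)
The plan for (a) is to pin down $\mathcal L_\Lambda$ explicitly by computing projective dimensions. First, every $M\in\mod H$, regarded as $(0,M,0)$, satisfies $pd_\Lambda M\le 1$: a minimal projective resolution $0\to P_1\to P_0\to M\to 0$ over the hereditary algebra $H$ consists of modules $(0,P_j,0)$, which are $\Lambda$-projective, and stays exact in $\mod\Lambda$. Since $\mod H$ is closed under predecessors, every predecessor of an $H$-module is again an $H$-module, whence $\ind H\subseteq\mathcal L_\Lambda$. For the modules $\tau_\Lambda^{-1}(I_i)$ I would compute a projective presentation — dualizing an injective copresentation of $I_i$ over $H$ and using the relation between $\tau_\Lambda^{-1}$ and the Nakayama functor — to get $pd_\Lambda\tau_\Lambda^{-1}(I_i)\le 1$, and then check that the predecessors of $\tau_\Lambda^{-1}(I_i)$ are exactly $H$-modules, projective-injective modules $\widehat P_j=(P_j,\dual H\otimes_H P_j,1)$, and other $\tau_\Lambda^{-1}(I_j)$, all of projective dimension at most $1$, so that $\tau_\Lambda^{-1}(I_i)\in\mathcal L_\Lambda$.

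The projective-injective modules $\widehat P_j$ themselves lie in $\mathcal L_\Lambda$, being projective with only $H$-module predecessors. For the reverse inclusion I would exploit that $\mathcal D_\Lambda$ is a fundamental domain: any indecomposable $N$ outside $\mathcal D_\Lambda\cup\{\widehat P_j\}$ sits beyond the projective-injectives in the Auslander--Reiten quiver and admits a predecessor of projective dimension $2$, so $N\notin\mathcal L_\Lambda$. This yields $\mathcal L_\Lambda=\mathcal D_\Lambda\cup\{\widehat P_1,\dots,\widehat P_n\}$, which is precisely (a).

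The heart of (b) is a comparison of extension groups. For $X,Y$ in the fundamental domain I would establish a functorial isomorphism
$$\Ext^1_{\mathcal C_H}(X,Y)\cong\Ext^1_\Lambda(X,Y)\oplus\dual\Ext^1_\Lambda(Y,X),$$
so that the single (symmetric, by $2$-Calabi--Yau duality) cluster extension group vanishes exactly when both one-sided $\Lambda$-extension groups vanish. For $X,Y$ both $H$-modules this specializes to the known $\Ext^1_{\mathcal C_H}(X,Y)\cong\Ext^1_H(X,Y)\oplus\dual\Ext^1_H(Y,X)$ together with $\Ext^1_\Lambda(X,Y)\cong\Ext^1_H(X,Y)$; the new content is the case where one or both arguments are $\tau_\Lambda^{-1}(I_i)$, which I would treat using the identification of $\tau_\Lambda^{-1}(I_i)$ with $P_i[1]$ under the passage to $\mathcal C_H$, together with the almost split structure of $\Lambda$.

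Granting the lemma, (b) becomes a counting argument. Given a cluster-tilting object $T=T_1\oplus\cdots\oplus T_n$, realized in $\mathcal D_\Lambda$, set $\bar T=T\oplus\widehat P_1\oplus\cdots\oplus\widehat P_n$. Then $pd_\Lambda\bar T\le 1$ by (a); $\Ext^1_\Lambda(\bar T,\bar T)=0$ because $\Ext^1_\Lambda(\widehat P_j,-)=0$ and $\Ext^1_\Lambda(-,\widehat P_j)=0$ (the $\widehat P_j$ are projective and injective), while $\Ext^1_\Lambda(T_i,T_j)=0$ follows from $\Ext^1_{\mathcal C_H}(T,T)=0$ through the lemma; and $\bar T$ has $2n$ pairwise nonisomorphic indecomposable summands, so by the Bongartz counting criterion it is a tilting $\Lambda$-module whose non projective-injective summands lie in $\mathcal L_\Lambda$. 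Conversely, for such a tilting module $U$ each $\widehat P_j$ is a summand: the defining sequence $0\to\widehat P_j\to U^0\to U^1\to 0$ with $U^0,U^1\in\add U$ splits since $\widehat P_j$ is injective, forcing $\widehat P_j\in\add U$. Hence $U=T\oplus\bigoplus_j\widehat P_j$ with the $n$ remaining summands in $\mathcal D_\Lambda$ by (a); the lemma converts $\Ext^1_\Lambda(T,T)=0$ into $\Ext^1_{\mathcal C_H}(T,T)=0$, and a rigid object with $n$ indecomposable summands in $\mathcal C_H$ is cluster-tilting. These two constructions are mutually inverse. I expect the main obstacle to be the comparison isomorphism of the third paragraph — in particular verifying it when the arguments are the non-$H$-module objects $\tau_\Lambda^{-1}(I_i)$ — together with the projective-dimension and predecessor computations underlying (a).
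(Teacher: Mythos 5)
Your part (a) contains a genuinely false step. You assert that every projective-injective $\widehat P_j=(P_j,\dual H\otimes_H P_j,1)$ lies in $\mathcal L_\Lambda$, ``being projective with only $H$-module predecessors,'' and conclude $\mathcal L_\Lambda=\mathcal D_\Lambda\cup\{\widehat P_1,\dots,\widehat P_n\}$. Both claims fail in general. Take $H=k(1\to 2\to 3)$: then $\rad \widehat P_1=(P_2,I_1,f)$ with $f:\dual H\otimes_H P_2\cong I_2\twoheadrightarrow I_1=S_1$ is indecomposable, is not an $H$-module, and has minimal projective resolution $0\to(0,P_3,0)\to(0,P_2,0)\to\widehat P_2\to\rad\widehat P_1\to 0$, so $pd_\Lambda(\rad\widehat P_1)=2$; since $\rad\widehat P_1$ maps nontrivially into $\widehat P_1$, we get $\widehat P_1\notin\mathcal L_\Lambda$. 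This is exactly why the theorem asserts only the one-sided statement (the indecomposables of $\mathcal L_\Lambda$ outside $\mathcal D_\Lambda$ are projective-injective, not that all projective-injectives occur), and why the paper remarks in the proof of Proposition \ref{IR2} that only \emph{some} projective-injectives may belong to $\mathcal L_\Lambda$. The correct easy half is: $X\in\mathcal L_\Lambda$ implies $pd_\Lambda X\le 1$, which by Proposition \ref{Z} ($pd_\Lambda X\le 1$ iff $\tau_\Lambda X\in\mod H$) forces $X\in\mathcal D_\Lambda\cup\{\text{projective-injectives}\}$; your appeal to $N$ ``sitting beyond the projective-injectives in the AR quiver'' should be replaced by this homological characterization.

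The second gap is that your proof of $\mathcal D_\Lambda\subseteq\mathcal L_\Lambda$ consists of the instruction to ``check that the predecessors of $\tau_\Lambda^{-1}(I_i)$ are exactly $H$-modules, projective-injectives, and other $\tau_\Lambda^{-1}(I_j)$, all of projective dimension at most $1$'' --- but that is precisely the nontrivial content of the theorem, not a routine verification: a path into $\tau_\Lambda^{-1}(I_i)$ may enter and leave projective-injective modules, where $\tau$-based induction breaks down since projectives have no translate. Note that the paper itself does not reprove Theorem \ref{1}, quoting it from \cite{ABST}; its own supporting machinery is designed exactly for this point: Lemma \ref{Y} ($\Hom_\Lambda(X,Y)\ne 0$ and $pd_\Lambda Y=1$ imply $pd_\Lambda X\le 1$, proved by climbing along irreducible maps and using that $\rad\End(\tau_\Lambda^{-1}\dual H)$ is nilpotent) together with the minimal-path argument of Proposition \ref{X}, which handles passages through projective-injectives via $\rad X_t$ and $X_1/\soc X_1$. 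Your part (b), by contrast, is essentially the standard and correct route: the comparison $\Ext^1_{\mathcal C_H}(X,Y)\cong\Ext^1_\Lambda(X,Y)\oplus\dual\Ext^1_\Lambda(Y,X)$ on the domain does hold (via extension-closedness of $\mod\Lambda$ in $\mod\hat H$ and Happel's equivalence $\underline{\mod}\,\hat H\simeq{\rm D}^b(H)$), the $2n$-summand count is sound, and splitting off the injective $\widehat P_j$ from any tilting module is the same device as in the paper's Lemma \ref{Juan}. But you defer that comparison lemma entirely, and (b) quietly uses the hard direction of (a), so the gap above propagates: as it stands, the heart of the argument is missing.
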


Note that in \cite{ABST2} it is shown that the bijection in (b) is with all tilting modules.   Using the following  results for  $\Lambda$-modules with projective dimension at most  one, we give a different approach to the improved version.

\begin{prop} \label{Z}

Let $X\in \ind\Lambda .$ Then $pd_\Lambda X\leq 1$ \MIP{if and only if} $\tau_\Lambda X\in 
\mod H.$ In other words, the indecomposable $\Lambda -$modules $X$ such
that $pd_\Lambda X\leq 1$ are those in  the fundamental domain of $\mathcal{C}_{H}$, together with 
the indecomposable projective-injective $\Lambda -$modules.
\end{prop}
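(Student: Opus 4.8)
The plan is to reduce the statement to a general homological criterion and then to a short computation with the triple description of $\mod\Lambda$. The criterion I would use is that for any Artin algebra $A$ and any $M\in\mod A$,
\[
pd_A M\leq 1 \iff \Hom_A(DA,\tau M)=0 .
\]
Granting this for $A=\Lambda$, the proposition becomes the assertion that, writing $N=\tau_\Lambda X$, one has $\Hom_\Lambda(D\Lambda,N)=0$ exactly when $N$ lies in $\mod H$; the equivalence with $pd_\Lambda X\le 1$ then follows at once (for $X$ projective both conditions hold trivially, since $\tau_\Lambda X=0$).

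To establish the criterion I would take a minimal projective presentation $P_1\xrightarrow{\,d\,}P_0\to M\to 0$, so that $\tau M=D\operatorname{Tr}M$ with $\operatorname{Tr}M=\operatorname{coker}(d^{\ast})$ and $(-)^{\ast}=\Hom_A(-,A)$. Applying the duality $D$ gives $\Hom_A(DA,\tau M)\cong\Hom_{A^{\op}}(\operatorname{Tr}M,A)=(\operatorname{Tr}M)^{\ast}$, and dualizing the presentation of $\operatorname{Tr}M$ (using $P_i^{\ast\ast}\cong P_i$ and $d^{\ast\ast}\cong d$) identifies $(\operatorname{Tr}M)^{\ast}$ with $\ker(d\colon P_1\to P_0)$. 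Since $pd_A M\le 1$ is equivalent to $d$ being injective, i.e.\ to $\ker d=0$, the criterion follows.

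Next I would compute $\Hom_\Lambda(D\Lambda,N)$ using that the indecomposable injective $\Lambda$-modules are the projective-injectives $(P,DH\otimes_H P,1)$ and the modules $(I,0,0)$ with $I$ injective in $\mod H$. Recall that a morphism of triples $(\alpha,\beta)\colon (X_1,Y_1,f_1)\to(X_2,Y_2,f_2)$ is a pair with $f_2(1\otimes\alpha)=\beta f_1$. Writing $N=(X_N,Y_N,f_N)$, a morphism from $(P,DH\otimes_H P,1)$ to $N$ is determined by its first component $\alpha\colon P\to X_N$, with $\beta=f_N(1\otimes\alpha)$ forced and no further constraint, so $\Hom_\Lambda((P,DH\otimes_H P,1),N)\cong\Hom_H(P,X_N)$. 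Hence these groups all vanish iff $X_N=0$, i.e.\ iff $N\in\mod H$; and once $X_N=0$ the remaining injectives $(I,0,0)$ admit only the zero map into $N$. Therefore $\Hom_\Lambda(D\Lambda,N)=0\iff N\in\mod H$, which combined with the criterion yields $pd_\Lambda X\le 1\iff\tau_\Lambda X\in\mod H$.

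For the reformulation I would argue that the modules with $\tau_\Lambda X\in\mod H$ are exactly the fundamental domain together with the projective-injectives: the $H$-modules $(0,Y,0)$ have $pd_\Lambda\le 1$ because $\mod H$ is closed under predecessors, so $\tau_\Lambda(0,Y,0)\in\mod H$; among the $(0,Y,0)$ the only ones whose $\tau_\Lambda^{-1}$ leaves $\mod H$ are the injective ones, giving the extra objects $\tau_\Lambda^{-1}(0,I_i,0)$, while for $Y$ non-injective the AR sequence ending at $(0,\tau_H^{-1}Y,0)$ stays in $\mod H$ so $\tau_\Lambda^{-1}(0,Y,0)=(0,\tau_H^{-1}Y,0)$ is again an $H$-module; and the projective-injectives have $\tau_\Lambda=0$. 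This matches the description of $\mathcal{L}_\Lambda$ in Theorem~\ref{1}. The main obstacle I anticipate is essentially bookkeeping rather than conceptual: pinning down $\tau_\Lambda=D\operatorname{Tr}$ and the duality over the triangular matrix algebra, and keeping the triple-morphism compatibility $f_2(1\otimes\alpha)=\beta f_1$ straight; once the criterion above is in place the homological content is light.
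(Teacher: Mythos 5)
Your proposal is correct and follows essentially the same route as the paper: both rest on the criterion $pd_\Lambda X\leq 1$ if and only if $\Hom_\Lambda(D\Lambda,\tau_\Lambda X)=0$, combined with the observation that this vanishing holds precisely when $\tau_\Lambda X\in\mod H$ (the paper gets this from the description of the injective $\Lambda$-modules and the closure of $\mod H$ under predecessors, while you verify it by the explicit triple computation $\Hom_\Lambda((P,DH\otimes_H P,1),N)\cong\Hom_H(P,X_N)$ — a cosmetic difference). Your extra material — the transpose proof of the criterion and the detailed check of the ``in other words'' reformulation via $\tau_\Lambda^{-1}(0,Y,0)=(0,\tau_H^{-1}Y,0)$ for $Y$ non-injective — is sound and simply makes explicit what the paper leaves implicit.
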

\begin{proof} We have that $pd_\Lambda X>1$ \MIP{if and only if} Hom$_{\Lambda }(D\Lambda ,\tau X)\neq 0,$ and this last
condition implies $\tau X\notin \mod H$, since the injective
$\Lambda$-modules do not belong to $\mod H.$ Conversely, if $\tau X\notin \mod H,$ there is a projective-injective  $\Lambda -$module $E$ such that
Hom$_{\Lambda }(E,\tau X)\neq 0,$ and this implies $pd_\Lambda X>1.$ 
\end{proof}
\bigskip

\begin{lem}\label{Y}

Let $X,Y\in \ind\Lambda $ be such that Hom$_{\Lambda }(X,Y)\neq 0$ and $%
pd_\Lambda Y=1.$ Then $pd_\Lambda X\leq 1.$
\end{lem}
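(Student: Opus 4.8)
The plan is to deduce everything from Proposition~\ref{Z}, which converts the conclusion $pd_\Lambda X\le 1$ into the statement $\tau_\Lambda X\in\mod H$, together with the fact recalled above that the subcategory $\mod H$ of $\mod\Lambda$ is closed under predecessors. First I would dispose of the trivial cases: if $X$ is projective there is nothing to prove, and since $pd_\Lambda Y=1$ forces $Y$ to be non-projective, I may assume that both $X$ and $Y$ are indecomposable and non-projective. By Proposition~\ref{Z} the hypothesis $pd_\Lambda Y=1$ gives $\tau_\Lambda Y\in\mod H$, so the goal reduces to producing a nonzero morphism out of $\tau_\Lambda X$ landing in $\mod H$: predecessor-closure of $\mod H$ then yields $\tau_\Lambda X\in\mod H$, and Proposition~\ref{Z} gives $pd_\Lambda X\le 1$.

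I would then split according to whether $Y$ lies in $\mod H$. If $Y\in\mod H$, the nonzero map $X\to Y$ already exhibits $X$ as a predecessor of an object of $\mod H$, whence $X\in\mod H$; then $\tau_\Lambda X$, being itself a predecessor of $X$, also lies in $\mod H$, and Proposition~\ref{Z} finishes this case. The substantive case is $Y\notin\mod H$. Here I would invoke that $\tau_\Lambda=D\operatorname{Tr}$ induces an equivalence between the stable categories $\underline{\mod}\,\Lambda$ and $\overline{\mod}\,\Lambda$, giving a natural isomorphism $\underline{\Hom}_\Lambda(X,Y)\cong\overline{\Hom}_\Lambda(\tau_\Lambda X,\tau_\Lambda Y)$. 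Thus, provided the given nonzero map $X\to Y$ does not factor through a projective, it transports to a nonzero morphism $\tau_\Lambda X\to\tau_\Lambda Y$ in $\mod\Lambda$; since $\tau_\Lambda Y\in\mod H$, predecessor-closure again gives $\tau_\Lambda X\in\mod H$ and hence $pd_\Lambda X\le 1$.

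The main obstacle is precisely the possibility that every nonzero morphism $X\to Y$ factors through a projective, so that the stable map vanishes and transport through $\tau_\Lambda$ yields nothing. To handle it I would factor such a map through an indecomposable projective $P$. If $P$ is of the form $(0,P',0)$, i.e.\ lies in $\mod H$, then $X$ is once more a predecessor of an object of $\mod H$ and we conclude as before. The remaining possibility is that the map factors through a projective-injective $E=(P,DH\otimes_H P,1)$, and this is the genuinely delicate point: a projective-injective is projective (so a map through it is stably zero) and injective (so $\tau_\Lambda E=0$), hence it carries no information through the stable equivalence. I expect to settle this using the explicit triple description of $E$ and of the composite $X\to E\to Y$ — exploiting that in this case $Y\notin\mod H$ while $\tau_\Lambda Y\in\mod H$ — to show that such a factorization cannot be genuinely new, i.e.\ that it again forces $X$ to be a predecessor of some object of $\mod H$, thereby reducing it to the cases already treated.
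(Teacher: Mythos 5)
Your easy cases are fine: the reductions via Proposition~\ref{Z}, predecessor-closure of $\mod H$, and the stable equivalence $\underline{\Hom}_\Lambda(X,Y)\cong\overline{\Hom}_\Lambda(\tau X,\tau Y)$ (plus the observation that a factorization through a projective of the form $(0,P',0)$ puts $X$ in $\mod H$) are all correct. But the proof is not complete: the case where every nonzero map $X\to Y$ factors through a projective-injective module is exactly the heart of the lemma, and there you offer only an expectation (``I expect to settle this\dots''), not an argument. Worse, the conclusion you aim for in that case is the wrong shape: since $\mod H$ is closed under predecessors, ``$X$ is a predecessor of some object of $\mod H$'' is equivalent to $X\in\mod H$, in which case $pd_\Lambda X\le 1$ is trivial because $H$ is hereditary. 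So what you would actually have to prove is that the delicate case can never occur with $X\notin\mod H$ --- and since in every other case you are already done, that claim is essentially equivalent to the lemma itself in the critical range. The triple description does not obviously give traction here: from $X\to E\to Y$ with $E=(P,DH\otimes_H P,1)$ you get a nonzero map $X\to\rad E$ (the map $X\to E$ cannot be surjective as $X$ is indecomposable non-projective), but for an \emph{arbitrary} factorization $\rad E$ need not be related to $\tau Y$ nor lie in $\mod H$, so neither a contradiction with $pd_\Lambda Y=1$ nor a predecessor conclusion follows.

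The paper circumvents precisely this difficulty by never considering an arbitrary factorization. Assuming $pd_\Lambda X>1$ (so $X\notin\mod H$), it factors $f$ through the \emph{minimal right almost split} map $E\to Y$ and picks an indecomposable summand $E_0$ with nonzero composite $X\to E_0\to Y$. If $E_0$ is projective (hence projective-injective), the irreducibility of $E_0\to Y$ identifies $\tau Y\cong\rad E_0$; since $X\to E_0$ lands in $\rad E_0$ and $X\notin\mod H$, predecessor-closure gives $\tau Y\cong\rad E_0\notin\mod H$, contradicting $pd_\Lambda Y=1$ by Proposition~\ref{Z}. If $E_0$ is not projective, then $\tau E_0$ is a predecessor of $\tau Y\in\ind H$, so $pd_\Lambda E_0\le 1$ and the argument repeats with $E_0$ in place of $Y$; termination is guaranteed because all the modules $E_i$ produced are direct summands of $\tau_\Lambda^{-1}\dual H$ and $\rad\End(\tau_\Lambda^{-1}\dual H)$ is nilpotent, while the composites of the irreducible maps remain nonzero. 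Your sketch has no analogue of either the identification $\tau Y\cong\rad E_0$ (which is what converts a projective-injective factorization into information about $\tau Y$) or this nilpotency-based termination mechanism; without them, the delicate case --- and hence the lemma --- remains unproved.
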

\begin{proof}
Let $f:X\lra Y$ be a nonzero morphism. We want to show that $pd_\La X\leq 1$. Suppose that this is not the case. Then $f$ is not an isomorphism, and so it factors through the minimal right almost split morphism $E\lra Y$. Since $f\neq 0$, we can choose an indecomposable direct summand $E_{0}$ of $E$, and morphisms $g_{0}:E_{0}\lra Y$ and $h_{0}:X\lra E_{0}$ with $g_{0} h_{0}\neq 0$ and $g_{0}$ irreducible. 
Then $E_{0}\notin\mod H$, because its predecessor $X$ is not in $\mod H$. Now suppose $E_{0}$ is projective. Then it is also injective, \I {since all indecomposable projective $\La$-modules which are not in $\mod H$ are injective.} Hence $\tau Y\cong\rad E_{0}$, because $g_{0}:E_{0}\lra Y$ is irreducible. 
Now $h_{0}:X\lra E_{0}$ factors through $\rad E_{0}\hookrightarrow E_{0}$, so rad$E_0 \notin \mod H$, and we conclude that $\tau Y\cong\rad E_{0}\notin\mod H$. By Proposition \ref{Z}, this contradicts our hypothesis $\MIP{pd\, } Y=1$. 
\MIP{Therefore $E_{0}$ is not projective, and then there is an irreducible morphism $\tau E_{0}\lra\tau Y$. On the other hand,  $pd\, Y = 1$ implies that $\tau Y $ is in $\ind H$.   Hence $\tau E_0$ is in $\ind H$, and thus $pd\, E_0 = 1$.} Therefore 
 our original  morphism  $f:X\lra Y$ can be replaced by $h_{0}:X\lra E_{0}$,  and so we can iterate the process to obtain an arbitrarily long path of irreducible morphisms $E_{m}\overset{g_{m}}{\lra}E_{m-1}\overset{g_{m-1}}{\lra}\dots\overset{g_{2}}{\lra }E_{1}\overset{g_{1}}{\lra}E_{0}$ with $g_{1}g_{2}\dots g_{m}\neq 0$.
But this is a contradiction, because each $E_{i}\in\tau^{-1}\ind H\setminus\mod H$, i.e. it is a direct summand of $\tau^{-1}\dual H$, and this implies that all the $g_{i}$ are in $\rad\End(\tau^{-1}\dual H)$, which is nilpotent.
\end{proof}

{Proposition \ref{Z} can be used to give the following relationship between $\mathcal L_\Lambda$ and the modules of projective dimension at most $1$.}
\bigskip

\begin{prop}\label{X}

\JC{If $X\in\ind\Lambda$ and $pd_\La X=1$, then} $X\in \mathcal{L}_{\Lambda }$\MIP{.}
\end{prop}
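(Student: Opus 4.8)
The plan is to prove the equivalent statement that every predecessor $Y$ of $X$ satisfies $pd_\Lambda Y\leq 1$, arguing backwards along a path $Y=Z_0\to Z_1\to\cdots\to Z_t=X$ of nonzero maps between indecomposables. First I would dispose of the case $X\in\mod H$: here $pd_\Lambda X=pd_H X\leq 1$ automatically, and since the subcategory $\mod H$ is closed under predecessors every $Z_i$ lies in $\mod H$, so every predecessor has projective dimension at most $1$ (as $H$ is hereditary). By the description following Proposition \ref{Z}, the remaining possibility is that $X$ is a non-projective indecomposable of the fundamental domain not lying in $\mod H$, that is $X\cong\tau_\Lambda^{-1}I$ for some indecomposable injective $H$-module $I$.

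For such $X$ I would run a downward induction on $i$, establishing $pd_\Lambda Z_i\leq 1$, the base case $pd_\Lambda Z_t=1$ being the hypothesis. The argument then rests on a single step: given a nonzero map $W\to W'$ of indecomposables with $pd_\Lambda W'\leq 1$, deduce $pd_\Lambda W\leq 1$. When $pd_\Lambda W'=1$ this is exactly Lemma \ref{Y}. When $W'$ is projective of the form $(0,P,0)$, the map forces $W\in\mod H$ (again by closure under predecessors), whence $pd_\Lambda W=pd_H W\leq 1$. Tracking the induction, if it never stalls at a projective-injective target we are done.

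The one genuinely delicate step, and the one I expect to be the crux, is when $W'=E$ is an indecomposable projective-injective module. One cannot argue purely locally here: when $\gldim\Lambda=3$ some radical $\rad E$ has a summand of projective dimension $2$ (indeed $pd_\Lambda$ of a top simple is $1+pd_\Lambda(\rad E)$), so such an $E$ may possess predecessors of projective dimension exceeding $1$ and need not itself lie in $\mathcal{L}_\Lambda$. Thus the point to be proved is global: a projective-injective occurring as a predecessor of $X\cong\tau_\Lambda^{-1}I$ cannot have a predecessor of projective dimension $\geq 2$. To prove this I would translate the path by $\tau_\Lambda$, using the mesh relations to convert a path ending at $\tau_\Lambda^{-1}I$ into one ending at $I\in\mod H$; the projective-injective vertices are exactly where the translation is interrupted, and there I would use the explicit description $E\cong(P,\dual H\otimes_H P,1)$ together with its $\soc$ and $\rad$ data to show that any predecessor reaching $E$ before the interruption still translates into $\mod H$, forcing projective dimension at most $1$ via Proposition \ref{Z}. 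Controlling the translation across the projective-injectives is where the special structure of the duplicated algebra is indispensable, and where the real work of the proof lies.
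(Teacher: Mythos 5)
You correctly dispose of the easy cases ($X\in\mod H$; targets of projective dimension exactly $1$ via Lemma \ref{Y}; projective targets lying in $\mod H$), and you correctly locate the crux at the projective-injective vertices, where Lemma \ref{Y} gives nothing and where, as you rightly note, $pd_\La S=1+pd_\La(\rad E)$ shows a projective-injective $E$ can have predecessors of projective dimension $2$, so no purely local argument can work. But at exactly that point the proposal stops proving and starts announcing: ``translate the path by $\tau_\La$ using the mesh relations \dots where the real work of the proof lies.'' The claim that ``any predecessor reaching $E$ before the interruption still translates into $\mod H$'' is essentially equivalent to the statement being proved, and no argument for it is given. So what you have is a correct reduction to the hard case, not a proof. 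Worse, the sketched mechanism has a genuine obstruction you do not address: $\tau_\La$ only induces an equivalence of stable categories, so a nonzero map $W\to W'$ between indecomposables induces a possibly zero map $\tau W\to\tau W'$ --- it is killed precisely when $W\to W'$ factors through a projective --- and the vertices where you need to translate are themselves projective. Mesh relations govern compositions of irreducible maps, and nothing in your plan converts an arbitrary path of nonzero morphisms into such a path.

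The paper's proof avoids translating the path altogether. It takes a counterexample path of minimal length from $Y$ with $pd\,Y>1$ to $X$ with $pd\,X=1$ (all vertices outside $\mod H$, since $\mod H$ is closed under predecessors). Minimality plus Lemma \ref{Y} forces $t\geq 1$ and every intermediate vertex $X_i$ to be projective, hence projective-injective; the map into $X_t$ factors through $\rad X_t$, which is neither injective nor projective, so minimality forces $pd(\rad X_t)>1$, and the length-two path $\rad X_t\hookrightarrow X_t\to X$ then forces $t=1$. Finally, for the path $Y\to X_1\to X$, the map $X_1\to X$ factors through $X_1/\soc X_1$, which is not projective, so Lemma \ref{Y} gives $pd(X_1/\soc X_1)=1$; Proposition \ref{Z} then puts $\tau\left(X_1/\soc X_1\right)$ in $\mod H$, and the almost split sequence attached to the projective-injective $X_1$ identifies $\tau\left(X_1/\soc X_1\right)$ with $\rad X_1$, yielding the contradiction $1<pd(\rad X_1)\leq 1$. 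Note that this identification $\tau(E/\soc E)\cong\rad E$ at a projective-injective $E\cong(P,DH\otimes_H P,1)$, combined with Lemma \ref{Y} applied downstream of $E$, is exactly the missing local-to-global datum your sketch would need; without it, or the minimal-path device that deploys it, your argument does not close.
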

\begin{proof} Suppose \MIP{that the result does not hold}, and let $Y\overset{f_{0}}{\longrightarrow }X_{1}$ $
\overset{f_{1}}{\longrightarrow }X_{2}\longrightarrow ...\longrightarrow
X_{t}\overset{f_{t}}{\longrightarrow }X$ be a path in $\ind \Lambda $
, with $pd \, X=1$ and $pd\,Y>1.$ Then $Y\notin {\mod}H$, because $H$ is hereditary. Since ${\mod}
H$ is closed under predecessors in $\mod \Lambda ,$ then the $X_{i}$ and $X$
do not belong to ${\mod}H$ either. Now, let us choose the path so that it
has minimal length among those with $pd\, X=1$ and $pd\,Y>1.$ By Lemma \ref{Y}, $
t\geq 1.$ By minimality, $X_{i}$ is projective (and hence injective) for $
1\leq i\leq t.$ The map $f_{t-1}$ factors through ${\rm rad}X_{t}$,  which is not
injective, and thus not projective. Then, by minimality we must have $
pd({\rm rad}X_{t})>1$ and $t=1.$ Since $f_{1}$ factors through $\frac{X_{1}}{\mathrm{
soc}X_{1}}$ -which is not projective- we must have $pd\left( \frac{X_{1}}{
\mathrm{soc}X_{1}}\right) =1,$ by Lemma \ref{Y}. By Proposition \ref{Z}, $\tau \left( 
\frac{X_{1}}{\mathrm{soc}X_{1}}\right) \in \mod H.$ But then $%
1<pd\, ( {\rm rad}X_{1})=pd\left( \tau \left( \frac{X_{1}}{\mathrm{soc}X_{1}}\right)
\right) \leq 1$. \MIP{ This contradiction ends the proof of the proposition}.
\end{proof}
\bigskip

It follows that the only indecomposable $\Lambda$-modules $X$ with pd$_\Lambda X \leq 1$ and $X$ not in $\mathcal L_\Lambda$, are projective-injective. Note that we do not necessarily have that $\mathcal L_\Lambda$ consists exactly of the indecomposable $\Lambda$-modules of projective dimension at most $1$. (See Example in \cite{ABST}). We now have the following improvement of Theorem 2.1.
\begin{thm} \label{1.5} Let  $
\Lambda =
\begin{pmatrix}
H & 0 \\ 
DH & H%
\end{pmatrix}
$ as before.

(a) If $X$ is indecomposable and not projective-injective in $\mod \Lambda$, then $X$ is in $\mathcal L_\Lambda$ if and only if pd$_\Lambda X \leq 1$.

(b) The fundamental domain $\mathcal D\I{_ \Lambda}$ of $\mathcal C_H$ inside $\mod \La$ lies in $\add \mathcal L_\Lambda$, and the remaining modules in $\mathcal L_\Lambda$ are projective-injective.

(c) There is a one to one correspondence between the multiplicity-free cluster tilting objects  in $\mathcal C_H$ and the basic tilting $\Lambda$-modules.

\end{thm}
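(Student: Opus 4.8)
The plan is to prove Theorem 2.4 by assembling the three parts from the results already established, treating (a) and (b) as essentially immediate corollaries and concentrating the real work on the bijection in (c). For part (a), I would combine Proposition 2.3 with Lemma 2.2: Proposition 2.3 gives that $pd_\Lambda X = 1$ implies $X \in \mathcal{L}_\Lambda$, and conversely, if $X \in \mathcal{L}_\Lambda$ is not projective-injective, then by definition all its predecessors (including $X$ itself) have projective dimension at most $1$, so $pd_\Lambda X \leq 1$; since $H$ is hereditary and $X$ is not projective-injective, the case $pd_\Lambda X = 0$ would force $X$ to be a non-projective-injective projective, which among indecomposables over $\Lambda$ means $X \in \mod H$ is projective, and such modules do lie in $\mathcal{L}_\Lambda$, so the equivalence holds once one checks this boundary case. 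For part (b), I would invoke Theorem 2.1(a) directly, or re-derive it from Proposition 2.3: the fundamental domain $\mathcal{D}_\Lambda = \add(\ind H \cup \{\tau_\Lambda^{-1}\dual H\})$ consists of modules $X$ with $\tau_\Lambda X \in \mod H$, hence of projective dimension at most $1$ by Proposition 2.3, hence in $\add \mathcal{L}_\Lambda$; the only remaining indecomposables in $\mathcal{L}_\Lambda$ are projective-injective by part (a).

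The substance is in part (c). My approach is to build on Theorem 2.1(b), which already furnishes a bijection between cluster-tilting objects in $\mathcal{C}_H$ and tilting $\Lambda$-modules whose indecomposable non projective-injective summands lie in $\mathcal{L}_\Lambda$, and to upgrade this to a bijection with \emph{all} basic tilting $\Lambda$-modules. The key reduction is to show that every basic tilting $\Lambda$-module $T$ automatically has the property that its non projective-injective indecomposable summands lie in $\mathcal{L}_\Lambda$. Since $\Lambda$ has finite global dimension and a tilting module of projective dimension at most one consists of summands of projective dimension at most one, each indecomposable summand $T_i$ satisfies $pd_\Lambda T_i \leq 1$; by part (a) of the present theorem, any such $T_i$ that is not projective-injective lies in $\mathcal{L}_\Lambda$. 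This is precisely the statement that the class of tilting modules appearing in Theorem 2.1(b) is in fact all basic tilting modules, which is the content noted from \cite{ABST2}, and here it follows directly from part (a) rather than requiring the separate argument of that reference.

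First I would record that a basic tilting $\Lambda$-module has all summands of projective dimension at most one (this uses that tilting here means classical tilting of projective dimension $\leq 1$, consistent with the usage throughout the excerpt). Then I would apply part (a): each indecomposable summand is either projective-injective or, having $pd_\Lambda \leq 1$, lies in $\mathcal{L}_\Lambda$. This shows the target class in Theorem 2.1(b) coincides with the class of all basic tilting $\Lambda$-modules, so the bijection of Theorem 2.1(b) is exactly the asserted bijection in (c). The main obstacle I anticipate is the boundary bookkeeping in part (a) around projective dimension zero: one must verify carefully that no indecomposable tilting summand of projective dimension zero escapes the dichotomy, i.e., that the indecomposable projective $\Lambda$-modules split cleanly into those in $\mod H$ (which lie in $\mathcal{L}_\Lambda$) and the projective-injectives (which are excluded by hypothesis), using the explicit description of the indecomposable projectives recalled after Theorem 2.1. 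Once this case analysis is clean, parts (a), (b), and (c) follow without further difficulty, and the improvement over Theorem 2.1 is exactly the removal of the extra hypothesis on summands, now subsumed by part (a).
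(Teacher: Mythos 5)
Your proposal is correct and follows essentially the same route as the paper, which states Theorem \ref{1.5} without a separate proof precisely because it is the assembly you describe: Proposition \ref{Z} and Proposition \ref{X} (together with the trivial observation that indecomposable projectives split into projective $H$-modules, which lie in $\mathcal L_\Lambda$, and projective-injectives) yield (a) and (b), and then every summand of a basic tilting $\Lambda$-module, having projective dimension at most $1$, automatically satisfies the hypothesis of Theorem \ref{1}(b), giving (c). Your handling of the projective-dimension-zero boundary case matches the paper's intent, and your remark that this replaces the appeal to \cite{ABST2} is exactly the ``different approach to the improved version'' the paper announces.
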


It was proven in \cite{ABST} that the global dimension gldim$\Lambda$ of $\Lambda$ is at most 3. { We end this section with a more precise description of gldim$\Lambda$. }We will give a proof of this result using the description of $\Lambda$-modules as triples, which allows us to calculate  the global dimension of $\Lambda$ more precisely. {This shows that $\Lambda$ is normally of global dimension  3.}

\begin{prop}
\label{W}   ${\rm gldim}\, \Lambda \leq 3.$ Moreover:

(a) ${\rm gldim}\, \Lambda =1$ if and only if $H$ is semisimple.

(b)  ${\rm gldim}\, \Lambda =2$ if and only if $\tau _{H}^{2}=0$ and $H$ is not semisimple.
\end{prop}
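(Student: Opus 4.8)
The plan is to compute the global dimension of $\Lambda$ directly from projective resolutions of the indecomposable $\Lambda$-modules, using the triple description $(X,Y,f)$ and the identification of $\mod H$ with the subcategory of triples $(0,Y,0)$. Since $\gldim\Lambda = \sup\{\pdim_\Lambda M : M \in \ind\Lambda\}$, I first partition the indecomposables into three families: the projective-injective modules (which have $\pdim = 0$), the modules lying in $\mod H$ (i.e. of the form $(0,Y,0)$), and the remaining indecomposables, which by Proposition \ref{Z} are exactly those in the fundamental domain with $\tau_\Lambda X \in \mod H$, hence of projective dimension at most $1$. So the only source of large projective dimension must be the modules in $\mod H$, and the whole computation reduces to understanding $\pdim_\Lambda(0,Y,0)$ for $Y \in \ind H$.

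The key computation is therefore the projective resolution of an $H$-module $Y$, viewed as the triple $(0,Y,0)$, over $\Lambda$. First I would take a minimal projective $H$-presentation $P_1 \to P_0 \to Y \to 0$ and lift it to $\Lambda$: the projective cover of $(0,Y,0)$ is built from the projective-injective triples $(P, \dual H \otimes_H P, 1)$ covering the top of $Y$, so the syzygy acquires a nonzero first component. Tracking this through, one sees that $\pdim_\Lambda(0,Y,0)$ is governed by how many times one must apply the inverse AR-translate before landing on a projective; concretely the extra homological degree over $\Lambda$ compared to $H$ (where $\pdim_H Y \le 1$) comes from resolving the $\dual H \otimes_H(-)$ terms, and this is controlled by $\tau_H$. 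The cleanest way to phrase the outcome is that $\pdim_\Lambda(0,Y,0) \le 3$ always, giving $\gldim\Lambda \le 3$, with the precise value on each module read off from the AR-quiver of $H$ via $\tau_H$.

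For the two equivalences I would argue as follows. For (a), $\gldim\Lambda = 1$ means every simple has projective dimension at most $1$; since the projective-injectives force relations unless $\dual H \otimes_H P$ is itself already accounted for, semisimplicity of $H$ makes $\dual H$ behave trivially and every indecomposable becomes projective or has a one-step resolution, and conversely if $H$ is not semisimple one exhibits a module of projective dimension $2$. For (b), the condition $\tau_H^2 = 0$ is exactly the statement that applying $\tau_H$ twice to any $H$-module gives zero, i.e. no $H$-module has two nonprojective $\tau_H$-predecessors; translating the resolution count above, $\pdim_\Lambda(0,Y,0) \le 2$ for all $Y$ precisely when the $\tau_H$-iteration terminates after one step, which is $\tau_H^2 = 0$, and ruling out the semisimple case (already covered by (a)) pins down $\gldim\Lambda = 2$.

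The main obstacle I anticipate is the explicit bookkeeping in the lifted projective resolution: correctly identifying the second and third syzygies of $(0,Y,0)$ as $\Lambda$-modules and showing that the nonvanishing of the top homological term is equivalent to $\tau_H^2 Y \ne 0$. This requires care with the functor $\dual H \otimes_H(-)$ and with which syzygies land in $\mod H$ versus acquiring a projective-injective part; the identification $\soc$ and $\TOP$ behavior under the triple description, together with Proposition \ref{Z} to control $\tau_\Lambda$, should make the induction go through, but verifying that the resolution is minimal (so that the computed length is the actual projective dimension) is the delicate point.
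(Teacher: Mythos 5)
Your reduction contains a structural error which, taken literally, would prove $\gldim\,\Lambda\leq 1$ for every $H$. First, your trichotomy of $\ind\Lambda$ is not exhaustive: Proposition \ref{Z} says that the indecomposables of projective dimension at most $1$ are exactly those in the fundamental domain together with the projective-injectives; it does \emph{not} say that every indecomposable outside $\mod H$ and the projective-injectives lies in the fundamental domain. The modules your partition omits --- the injectives $(I,0,0)$ and, more generally, the triples $(X,0,0)$ with $X\in\ind H$ --- are precisely where projective dimensions $2$ and $3$ occur. Second, you have the two copies of $\mod H$ interchanged. In the convention of the paper, the predecessor-closed copy of $\mod H$ consists of the triples $(0,Y,0)$; since such a module has no composition factors in the first coordinate, its projective cover is $(0,P_{0}(Y),0)$, built from the projectives $(0,P,0)$ and not from the projective-injectives $(P,\dual H\otimes_{H}P,1)$. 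Consequently $pd_{\Lambda}(0,Y,0)=pd_{H}Y\leq 1$, the syzygy never ``acquires a nonzero first component,'' and these modules are never a source of large projective dimension --- so the computation you propose to carry out on $(0,Y,0)$ cannot produce the bound $3$, and combined with the faulty trichotomy every module in your three families has projective dimension at most $1$.

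The correct reduction, which the paper uses, is $\gldim\,\Lambda=\max\{pd_{\Lambda}S : S \text{ simple}\}$, i.e.\ the maximum of $pd(0,X,0)$ and $pd(X,0,0)$; the whole content is then the explicit minimal resolution of $(X,0,0)$, which is the family your outline gestures at but misattributes. For $X$ projective one has
$$0\to(0,P_{1}(\dual H\otimes X),0)\to(0,P_{0}(\dual H\otimes X),0)\to(X,\dual H\otimes X,1)\to(X,0,0)\to 0,$$
so $\gldim\,\Lambda\leq 1$ forces $P_{1}(\dual H\otimes X)=0$ for all projective $X$, and since $\dual H\otimes-$ is the Nakayama equivalence this says every injective $H$-module is projective, i.e.\ $H$ is semisimple --- which is part (a). For $X$ non-projective the minimal resolution has length $3$ with leftmost term $(0,P_{1}(\tau X),0)$, so $pd(X,0,0)\leq 2$ if and only if $\tau X$ is projective, i.e.\ $\tau^{2}X=0$, which gives part (b) and the global bound $\gldim\,\Lambda\leq 3$. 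Your instincts that $\tau_{H}$ governs the extra homological degree and that minimality of the lifted resolution is the delicate point are both correct, but as written the bookkeeping is attached to the wrong coordinate and the wrong family of modules, so the gap is genuine.
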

\begin{proof}
 We calculate ${\rm gldim}\, \Lambda =\max \{pd\,S:S$ simple $\Lambda -$%
module$\}$

= $\max \{pd\,(X,0,0),pd\,(0,X,0):X\in \ind H\}.$


\MI{For $M$ in $\mod H$, $  P_1(M)\ra P_0(M) \ra M \ra 0 $ denotes a minimal projective presentation.}

Let $X\in \ind H$. Then $pd\,(0,X,0)\leq 1,$ for $\mod H$ is closed
under predecessors in $\mod \Lambda .$ Suppose $X$ is  projective in $\mod H$.  Then
the following is a minimal projective resolution:
{\footnotesize $$0\longrightarrow (0,P_{1}(DH\otimes X),0)\longrightarrow (0,P_{0}(DH\otimes
X),0)\longrightarrow (X,DH\otimes X,1)\longrightarrow (X,0,0)\longrightarrow
0.$$}

Now, if $\gldim\Lambda\leq1$, then $P_{1}(\dual H\otimes X)=0$ for every projective X. Since $\dual H\otimes-$ is the Nakayama equivalence between projective and injective $H$-modules, this is to say that every injective $H$-module is also projective, i.e. $H$ is semisimple. This establishes (a), since $\Lambda$ is clearly hereditary when $H$ is semisimple.

 Assume now that  $X$ is
not projective. Then we have an exact sequence
$ 0 \rightarrow \tau X \rightarrow D(P_1(X)^*) \rightarrow D(P_0(X)^*)$. Using  that for \JC{projective $P$}  there  is a functorial isomorphism $DH \otimes P \simeq DP^*$,   we obtain the following   minimal projective resolution:

$0\longrightarrow (0,P_{1}(\tau X),0)\longrightarrow (0,P_{0}(\tau
X),0)\longrightarrow (P_{1}(X),DH\otimes P_{1}(X),1)\longrightarrow
(P_{0}(X),DH\otimes P_{0}(X),1)\longrightarrow (X,0,0)\longrightarrow 0.$ Thus pd\,$(X,0,0) \leq 2$ if and only if $P_{1}(\tau X)=0$, if and only if $\tau^2X =0$. The  proposition {now }follows  right away. 
\end{proof}
\medskip

\begin{cor} \label{V} For each algebraically closed field $k$, there are only a finite
number of basic indecomposable hereditary $k-$algebras $H$ such that  ${\rm gldim}\, \Lambda \leq 2.$ 
\end{cor}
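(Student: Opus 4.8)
The plan is to combine Proposition~\ref{W} with the classical Drozd--Tits--Gabriel type finiteness for hereditary algebras of bounded type. By Proposition~\ref{W}(b), the condition $\gldim\Lambda\le 2$ forces $\tau_H^2=0$ (the case $\gldim\Lambda=1$, where $H$ is semisimple, contributes only finitely many algebras for a fixed $k$, namely products of matrix algebras of bounded size once we fix the number of simples; and in fact over an algebraically closed field a basic hereditary algebra is $kQ$ for an acyclic quiver $Q$, so semisimplicity means $Q$ has no arrows). So the real content is: for a connected acyclic quiver $Q$, the condition $\tau_H^2=0$ on $H=kQ$ holds for only finitely many $Q$.

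First I would translate $\tau_H^2=0$ into a statement about the Auslander--Reiten quiver of $H$. The condition $\tau_H^2 M=0$ for every indecomposable $H$-module $M$ says that every nonprojective indecomposable $M$ has $\tau_H M$ projective, i.e. the AR-quiver has a very restricted shape: every indecomposable lies within one $\tau$-step of a projective. In particular $H$ is representation-finite, since an algebra with $\tau_H^2=0$ has all indecomposables among the projectives and their $\tau^{-1}$'s, of which there are at most $2n$ (where $n$ is the number of vertices). Representation-finite hereditary algebras are exactly the path algebras of Dynkin quivers $A_n, D_n, E_6, E_7, E_8$, so already the \emph{underlying graph} is forced to be Dynkin.

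Next I would rule out all but finitely many Dynkin types by bounding the rank $n$. The key observation is that $\tau_H^2=0$ is far stronger than representation-finiteness: it caps the number of indecomposables at $2n$, whereas a Dynkin quiver of rank $n$ has a fixed (and for the $A$ and $D$ series, growing) number of positive roots, namely $\binom{n+1}{2}$ for type $A_n$ and $n(n-1)$ for type $D_n$. Comparing the count of indecomposables (= number of positive roots) against the bound $2n$ eliminates all sufficiently large $n$: for type $A_n$ one needs $\binom{n+1}{2}\le 2n$, which fails for $n\ge 4$, and similarly the $D$ and $E$ series survive only in small rank. Since for each fixed underlying Dynkin graph there are only finitely many orientations (acyclic quivers), and since over an algebraically closed field $H\cong kQ$ is determined up to isomorphism by $Q$, this leaves only finitely many $H$.

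The main obstacle, and the step requiring the most care, is the passage from the combinatorial bound to a clean finiteness statement: one must check that $\tau_H^2=0$ genuinely implies representation-finiteness (so that the Dynkin classification applies) rather than merely assuming it, and then verify that the counting inequality is valid \emph{orientation by orientation}, since the exact number of indecomposables with $\tau_H^2\ne 0$ can depend a priori on the orientation through the projective--injective structure. I expect this to be routine once the AR-quiver description is in hand, but it is where the argument must actually be nailed down; the reduction to Dynkin type and the final enumeration of small cases are then straightforward bookkeeping.
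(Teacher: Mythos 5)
Your proposal is correct, but the second half takes a genuinely different route from the paper. Both arguments start identically: Proposition \ref{W} reduces the question to $\tau_H^2=0$, which (as you observe) confines every indecomposable to the projectives and their $\tau^{-1}$-shifts, forcing representation-finiteness and hence, by Gabriel, a Dynkin quiver. From there the paper argues \emph{locally} on the quiver: $\tau_H^2=0$ is equivalent to every indecomposable being projective or injective, and this forbids any path $i\to j\to k$ (the simple $S_j$ would be neither) and any subquiver $i\leftarrow j\to k\leftarrow l$ (the module $\begin{smallmatrix} j \\ k \end{smallmatrix}$ would be neither), which pins down the \emph{exact} list: $\rm{A_1}$, $\rm{A_2}$, and the two nonlinearly oriented $\rm{A_3}$'s. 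You instead argue \emph{globally}, comparing the bound of $2n$ indecomposables with the positive-root counts $\binom{n+1}{2}$, $n(n-1)$, etc.; this cleanly eliminates all large ranks and the entire $D$ and $E$ series (note: no $D$ or $E$ type survives at all, since $D_4$ already has $12>8$), leaving only the graphs $\rm{A_1},\rm{A_2},\rm{A_3}$ with their finitely many orientations, which suffices for the finiteness claim as stated. Two remarks: first, the orientation-by-orientation verification you flag as the delicate step is actually unnecessary for finiteness, since the number of indecomposables of a Dynkin path algebra equals the number of positive roots independently of orientation; second, your counting bound genuinely cannot recover the paper's sharper conclusion, because linearly oriented $\rm{A_3}$ passes the test $6\le 6$ yet fails $\tau_H^2=0$ (its middle simple is neither projective nor injective, and the coincidence $P_1=I_3$ makes the projectives and injectives overlap). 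So your approach buys a quicker, computation-light proof of the stated corollary, while the paper's local analysis buys the precise classification of the four algebras.
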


\begin{proof} By Proposition \ref{W}, ${\rm gldim}\,  \Lambda \leq 1$ if and only if $\tau _{\Lambda
}^{2}=0,$ i.e. if each $\Lambda -$module is either projective or injective.
Hence $H$ is of finite representation type and \JC{so} its ordinary quiver $Q$ has no multiple arrows. In
addition, $(i\longrightarrow j\longrightarrow k)$ is not a subquiver of $Q$,
because the simple module $S_{j}$ would be neither projective nor injective
in such case. Finally, $(_{i}\swarrow ^{j}\searrow _{k}\swarrow ^{l})$ is
not a subquiver of $Q$, because otherwise the module {$\begin{smallmatrix} j \\ k \end{smallmatrix}$}  would be neither projective nor injective. Therefore $Q$ must be one of
the following four quivers: \JC{$\rm{A_{1}}, \rm{A_{2}},\rm{A_{3}}$ with nonlinear order $(\searrow\swarrow$ and $\swarrow \searrow)$.}\end{proof}

Denote by $\hat H = \left( \begin{smallmatrix}
 \ddots & & & & \\ & H & & &\\ 
&DH&H& &\\ 
& &DH&H &\\
& & &  &   \ddots
\end{smallmatrix} \right )$ 
the infinite dimensional repetitive algebra associated with the finite dimensional hereditary algebra $H$. As explained in \cite{ABST}, we have the following relationship:

$  \mod H \subset \mathcal D \subset \mathcal L_\La \subset \mod \La \subset \mod \hat H    \twoheadrightarrow     {\underline{\mod} \hat H} \xrightarrow{ \simeq} {\rm D}^b(H)  \twoheadrightarrow \mathcal C_H.$

The following more precise relationship will be useful.  

\begin{prop} \label{IR}  Let $\alpha: (X,Y,f) \rightarrow (X',Y',f')$  be a nonzero map in $\mod \La$. Then $\alpha $ factors through a projective injective $\La$-module if and only if it factors through a projective module in $\mod \hat H$.
\end{prop}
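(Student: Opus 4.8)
The plan is to read off both conditions from the level description of $\mod\hat H$ together with the known shape of the projective $\hat H$-modules. Recall that a $\hat H$-module is a family $(M_i)_{i\in\ZZ}$ of $H$-modules with $H$-linear maps $f_i\colon\dual H\otimes_H M_i\to M_{i+1}$ subject to $f_{i+1}(\id\otimes f_i)=0$, that $\La$ is a quotient of $\hat H$, and that under the resulting full embedding $\mod\La\hookrightarrow\mod\hat H$ the triple $(X,Y,f)$ becomes the family with $M_0=X$, $M_1=Y$, $f_0=f$ and $M_i=0$ for $i\neq0,1$. Since the embedding is full, $\Hom_\La$ and $\Hom_{\hat H}$ agree on such families. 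One implication is then immediate: the indecomposable projective-injective $\La$-module $(P,\dual H\otimes_H P,1)$ is exactly the indecomposable projective $\hat H$-module concentrated in levels $0$ and $1$, so any factorization of $\alpha$ through a projective-injective $\La$-module is in particular a factorization through a projective $\hat H$-module.

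First I would record the structure of the indecomposable projective $\hat H$-modules. For a vertex $v$ and $i\in\ZZ$, write $e_v^{(i)}$ for the corresponding primitive idempotent and $\hat{P}_v^{(i)}=\hat H e_v^{(i)}$. Then $\hat{P}_v^{(i)}$ is concentrated in levels $i$ and $i+1$, with level-$i$ part the projective $P_v$ and level-$(i+1)$ part $\dual H\otimes_H P_v$; its top is $S_v$ in level $i$, and, since $\hat H$ is self-injective, its socle is $S_v$ in level $i+1$, so that $\hat{P}_v^{(i)}\cong\dual(e_v^{(i+1)}\hat H)$ as the indecomposable injective with that socle. In particular the modules $\hat{P}_v^{(0)}$ are precisely the indecomposable projective-injective $\La$-modules.

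For the remaining implication, suppose $\alpha=\beta\gamma$ factors through a projective $\hat H$-module. Since the source and target are finite dimensional I may take this projective finite dimensional, hence a finite sum $\hat{P}=\bigoplus_k\hat{P}_{v_k}^{(i_k)}$; writing $\gamma=(\gamma_k)$ and $\beta=(\beta_k)$ componentwise gives $\alpha=\sum_k\beta_k\gamma_k$, so it suffices to show that each summand with $i_k\neq0$ contributes $\beta_k\gamma_k=0$. The two key computations are, for $\La$-modules $M,N$ (concentrated in levels $0,1$),
\[ \Hom_{\hat H}(\hat{P}_v^{(i)},N)\cong e_v^{(i)}N=(N_i)_v,\qquad \Hom_{\hat H}(M,\hat{P}_v^{(i)})\cong\dual(e_v^{(i+1)}M)=\dual\big((M_{i+1})_v\big), \]
the first being $\Hom(\hat H e_v^{(i)},N)=e_v^{(i)}N$ and the second the injective-Hom formula $\Hom(M,\dual(eA))\cong\dual(eM)$ applied to $e=e_v^{(i+1)}$. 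Hence a nonzero map $\hat{P}_v^{(i)}\to N$ forces $i\in\{0,1\}$, while a nonzero map $M\to\hat{P}_v^{(i)}$ forces $i+1\in\{0,1\}$, that is $i\in\{-1,0\}$. Consequently, for $i_k\neq0$ at least one of the two conditions fails: if $i_k\notin\{-1,0\}$ then $\gamma_k=0$, and if $i_k\notin\{0,1\}$ then $\beta_k=0$, so in all cases $\beta_k\gamma_k=0$. Therefore $\alpha=\sum_{i_k=0}\beta_k\gamma_k$ factors through $\bigoplus_{i_k=0}\hat{P}_{v_k}^{(0)}$, which is a projective-injective $\La$-module, as required.

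The main obstacle is the structural input rather than the combinatorics: one must pin down that $\hat{P}_v^{(i)}$ is concentrated in levels $i,i+1$ with top $S_v$ in level $i$ and socle $S_v$ in level $i+1$, and in particular establish the identification $\hat{P}_v^{(i)}\cong\dual(e_v^{(i+1)}\hat H)$ coming from the self-injectivity of $\hat H$, on which the second Hom-formula rests. Once those are in hand the support and overlap argument forcing $i=0$ is purely formal; the only routine points to check are the reduction to a finite direct sum of indecomposable projectives and the fullness of $\mod\La\hookrightarrow\mod\hat H$ used in the easy implication.
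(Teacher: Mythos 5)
Your proof is correct, but it takes a genuinely different route from the paper's. The paper argues by hand: it lists the three possible relative positions of an indecomposable projective $\hat H$-module with respect to the two levels occupied by a $\La$-module (its cases (1)--(3)), and kills the two bad cases with explicit commutative diagrams --- case (2) by a diagram forcing the outgoing map $\delta$ to vanish on the $\dual H$-component, and case (3) by using that $H$ is hereditary together with the Nakayama equivalence $\dual H\otimes_H -$ between projective and injective $H$-modules, to show that $\dual H\otimes\gamma\neq 0$ whenever $\gamma\colon Y\to H$ is nonzero. You instead invoke the structure theory of the repetitive algebra: self-injectivity of $\hat H$ with the precise level shift $\hat{P}_v^{(i)}\cong\dual(e_v^{(i+1)}\hat H)$, which converts both ends of the factorization into the one-line formulas $\Hom_{\hat H}(\hat{P}_v^{(i)},N)\cong (N_i)_v$ and $\Hom_{\hat H}(M,\hat{P}_v^{(i)})\cong\dual\bigl((M_{i+1})_v\bigr)$, so that pure support considerations force $i=0$; the overlap $\{0,1\}\cap\{-1,0\}=\{0\}$ exactly reproduces the paper's conclusion that only case (1) survives, and your vanishing $\Hom_{\hat H}(M,\hat{P}_v^{(1)})=0$ is the clean form of what the paper's case (3) establishes. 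What your route buys is uniformity and generality: nowhere do you use that $H$ is hereditary, so your argument proves the proposition verbatim for the duplicated algebra of an arbitrary finite dimensional algebra, and the case analysis evaporates. What it costs is the structural input you yourself flag: the socle/level bookkeeping and self-injectivity of $\hat H$ (standard, going back to Hughes--Wasch\-b\"usch and treated in Happel's book) are imported rather than proved, whereas the paper's diagram chase is elementary and self-contained given the triple description of $\mod\La$. The points you gloss --- reduction to a finite direct sum of indecomposable projectives via Krull--Schmidt in $\mod\hat H$, and fullness of the embedding $\mod\La\hookrightarrow\mod\hat H$ as the modules supported in levels $0,1$ --- are indeed routine and do not affect correctness.
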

\begin{proof}
The projective injective $\La$-modules are additively generated by $(H, DH, id)$. For $\hat H$, the projective modules, which coincide with the injective ones,  are additively generated by modules of the form
$\left( \begin{smallmatrix}
 \ddots & & & & \\  
0&H& 0& &\\ 
 &DH& 0& &\\
& & &  &   \ddots
\end{smallmatrix} \right )$.
If $\alpha $ 
factors through a projective injective $\La$-module, it is clear that it does the same when considered as a map in $\mod \hat H$.

Conversely, assume that $\alpha$ factors through a projective $\hat H$-module.  The possible projective modules must come from one or more of the following pictures:

$(1) \left( \begin{smallmatrix}X \\Y \end{smallmatrix} \right ) \xrightarrow{\gamma }\left( \begin{smallmatrix}H \\DH \end{smallmatrix} \right ) \xrightarrow{\delta }\left( \begin{smallmatrix}X' \\Y' \end{smallmatrix} \right ) $, \, $(2) \left( \begin{smallmatrix}0\\ X \\Y \end{smallmatrix} \right ) \xrightarrow{\gamma }{\left( \begin{smallmatrix} H \\DH \\0 \end{smallmatrix} \right )} \xrightarrow{\delta }\left( \begin{smallmatrix}0\\ X' \\Y' \end{smallmatrix} \right ) $, \,
$(3) \left( \begin{smallmatrix}X \\Y \\0 \end{smallmatrix} \right ) \xrightarrow{\gamma }\left( \begin{smallmatrix}0 \\H \\DH  \end{smallmatrix} \right ) \xrightarrow{\delta }\left( \begin{smallmatrix}X' \\ Y'\\0 \end{smallmatrix} \right ) $.
In case (2)we must have a commutative diagram
$$
\CD
 {DH}
@>{}>>
{0 } @.
\\@|    @VV{}V @. \\  DH
@>{\delta}>>
X'@.\,\,\, \,\,\, ,
\endCD
$$
which is impossible since $\delta \neq 0$. In case (3) \MI{the diagram}
$$
\CD
 {DH\otimes Y}
@>{DH\otimes \gamma}>>
{DH\otimes H }
\\@VV{}V    @VV{\simeq}V  \\  0
@>{}>>
DH 
\endCD 
$$
commutes.  Then $DH \otimes \gamma$ must be zero. But since $H$ is hereditary and $\gamma:Y \ra H$ is nonzero, there is an indecomposable summand $Y_1$ of $Y$ which is projective, with $\gamma |\MI{_{Y_1}}: Y_1 \ra H$ nonzero. Since $DH\otimes -$ gives an equivalence from the category of projective $H$-modules to the category of injective ones, then $DH\otimes \gamma|Y_1$ and hence $DH\otimes \gamma$ is nonzero. This gives a contradiction.

Hence we must have case (1), which implies that $\alpha$ factors through a projective injective $\La$-module.
\end{proof}

\bigskip
We end this section with some discussion  about fundamental domains (see \cite{ABST}). For the cluster category $\mathcal C_H$ we have a natural functor $\Pi : {\rm D}^b H \ra \mathcal C_H$. Let $\mathcal D$ be the additive subcategory of D$^b(H)$ whose indecomposable objects are the indecomposable $H$-modules together with the shift $[1]$ of the indecomposable projective $H$-modules \cite{BMRRT}.
Then $\mathcal D$ is a convex subcategory of D$^b(H)$, and $\Pi$ induces a bijection between the indecomposable objects of $\mathcal D$ and those of $\mathcal C_H$. In order to find other ``fundamental domains",  one is looking for similar properties. In particular, it is nice to use appropriate module categories rather than derived categories. A step in this direction was \MI{made} in \I{\cite{ABST}}, by considering the duplicated algebra $
\Lambda =
\begin{pmatrix}
H & 0 \\ 
DH & H%
\end{pmatrix}$ of a hereditary algebra $H$. Here there is a natural functor from $\mod \La$ to $\mathcal C_H$, as discussed above, and $\mod H$ is naturally embedded into $\mod \La$. In addition, the indecomposable objects $\tau_\La^{-1}(I)$, for $I$ indecomposable injective $\MI{H}$-module, are added to $\mod \MI{H}$ to form a fundamental domain \I{ ${\mathcal D}_\Lambda$} inside $\mod \La$, giving a desired bijection with the indecomposables in $\mathcal C_H$, from our functor $\mod \La \ra \mathcal C_H$ . Here the fundamental domain is not only convex, but \MI{is also closed under  predecessors in $\mod \Lambda$ which are not  projective-injective}. We shall see that we have a similar situation when replacing the duplicated algebra $\La$ by a smaller algebra $\Gamma$.

\bigskip


\section{The algebra $\Gamma$.}

{In this section} we will replace the duplicated algebra $\Lambda $ by a smaller algebra $%
\Gamma $ such that {also} $\mod \Gamma $  contains the fundamental domain {$\mathcal D$} of $%
\mathcal{C}_{H}$.

We start with a  lemma, which will be needed later.
\begin{lem} \label{L1}
\label{U}Let $A$ be a basic artin algebra, and let $S,S^{\prime }$ be simple
projective $A-$modules. Then:

(a) If $I$ is an indecomposable injective module {not isomorphic to} 
$ I_{0}(S),$
then $\Hom _{A}(I,I_{0}(S))=0.$

(b) End$_{A}I_{0}(S)\cong $ End$_{A}S.$

(c) End$_{A}I_{0}(S\oplus S^{\prime })\cong $ End$_{A}I_{0}(S)\times $ End$
_{A}I_{0}(S^{\prime }).$

(d) $S\cong $ End$_{A}S$ as an End$_{A}S-$vector space.
\end{lem}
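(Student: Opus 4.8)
The plan is to make part (a) the engine of the whole lemma, deduce (b) and (c) from it, and treat (d) separately as a consequence of $A$ being basic. Throughout I write $E = I_{0}(S)$ and use two standard facts about the injective envelope of a simple module: its socle $S = \soc E$ is \emph{essential}, so every nonzero submodule of $E$ contains $S$; and $E = I_{0}(\soc E)$ since $E$ is indecomposable injective.

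For (a), suppose $f\colon I\lra I_{0}(S)$ is nonzero; I would show this forces $I\cong I_{0}(S)$. Since $\operatorname{im}f\neq 0$ and $S$ is essential, $S\subseteq\operatorname{im}f$. Now the projectivity of $S$ enters: the inclusion $S\hookrightarrow\operatorname{im}f$ lifts through the surjection $I\twoheadrightarrow\operatorname{im}f$ to a map $s\colon S\lra I$, and because the composite is nonzero and $S$ is simple, $s$ is a nonzero, hence injective, map. Thus $S\cong s(S)$ is a simple submodule of $I$, so $s(S)\subseteq\soc I$; but $I$ is indecomposable injective, whence $\soc I$ is simple, forcing $\soc I\cong S$ and $I\cong I_{0}(\soc I)\cong I_{0}(S)$, a contradiction. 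Therefore $\Hom_{A}(I,I_{0}(S))=0$.

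For (b), since every endomorphism preserves the socle, restriction gives a well-defined ring homomorphism $\rho\colon\End_{A}I_{0}(S)\lra\End_{A}S$, $\phi\mapsto\phi|_{S}$. Surjectivity is immediate from injectivity of $I_{0}(S)$: given $\psi\in\End_{A}S$, compose with $S\hookrightarrow I_{0}(S)$ and extend along $S\hookrightarrow I_{0}(S)$ to obtain $\phi$ with $\rho(\phi)=\psi$. For injectivity I would reuse the argument of (a): if $\phi|_{S}=0$ but $\phi\neq 0$, then $S\subseteq\operatorname{im}\phi$, and lifting $S\hookrightarrow\operatorname{im}\phi$ through $\phi$ (projectivity of $S$) gives $s\colon S\lra I_{0}(S)$ with $\phi s\neq 0$; but $s(S)$ is a simple submodule, so $s(S)\subseteq\soc I_{0}(S)=S$, giving $s(S)=S$ and hence $\phi(S)=\phi(s(S))\neq 0$, contradicting $\phi|_{S}=0$. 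Part (c) then follows formally: $I_{0}(S\oplus S')=I_{0}(S)\oplus I_{0}(S')$, so $\End_{A}I_{0}(S\oplus S')$ is the matrix ring built from the four Hom-spaces, and since distinct simples over the basic algebra $A$ give $I_{0}(S)\not\cong I_{0}(S')$, part (a) applied in both directions annihilates the off-diagonal entries $\Hom_{A}(I_{0}(S'),I_{0}(S))$ and $\Hom_{A}(I_{0}(S),I_{0}(S'))$, leaving the product $\End_{A}I_{0}(S)\times\End_{A}I_{0}(S')$.

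For (d), I would argue purely from basicness: $A/\rad A$ is a product $\prod_{i}D_{i}$ of division rings with $D_{i}\cong\End_{A}S_{i}$, and the simple module $S=S_{i}$ is, as a module, exactly the factor $D_{i}$, hence one-dimensional over $D=\End_{A}S$, so $S\cong\End_{A}S$ as a $D$-vector space. The step I expect to be the main obstacle—or at least the one requiring care—is the injectivity in (b) (and its twin in (a)): the socle-restriction map is \emph{not} injective for a general simple module (for instance over $k[x]/(x^{2})$), and it is precisely the projectivity of $S$ that rescues the lifting argument, so I would make sure to flag that this hypothesis cannot be dropped.
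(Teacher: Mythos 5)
Your proof is correct, but it takes a genuinely different route from the paper's. For (a) and (b) the paper simply invokes the Nakayama equivalence $^{\ast}D$ from injective to projective modules: it carries $I_{0}(S)$ to the projective module $S$ itself and an indecomposable injective $I\not\cong I_{0}(S)$ to an indecomposable projective $P\not\cong S$, after which (a) reduces to the observation that a nonzero map $P\to S$ would be surjective and --- since $S$ is projective --- split, forcing $P\cong S$; and (b) becomes $\End_{A}I_{0}(S)\cong\End_{A}S$ on the nose. Your socle-essentiality-plus-lifting argument reaches the same conclusions without the Nakayama functor, at the cost of a few more lines; its merit is that it isolates exactly where projectivity of $S$ enters (the lift of $S\hookrightarrow\operatorname{im}f$ through the surjection), and your closing remark that the socle-restriction map fails to be injective for non-projective simples is accurate and worth flagging. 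For (c) both you and the paper deduce the statement from (a); note only that the vanishing of the cross terms needs $I_{0}(S)\not\cong I_{0}(S')$, which follows from $S\not\cong S'$ by comparing socles --- basicness is not what does the work there, it merely guarantees in the paper's application that the simple summands of $\Delta$ are pairwise non-isomorphic (and (c) as stated presumes $S\not\cong S'$, as both you and the paper implicitly do). For (d) the paper argues differently: it writes $_{A}A=S\oplus Q$ (using that $S$ is projective and $A$ basic) and computes $S\cong\Hom_{A}(A,S)=\End_{A}S\oplus\Hom_{A}(Q,S)=\End_{A}S$, the last Hom vanishing because a nonzero map $Q\to S$ would split by projectivity of $S$ and contradict basicness; your route through $A/\rad A\cong\prod_{i}D_{i}$ is equally valid and in fact slightly more general, since it never uses projectivity of $S$. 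One pedantic point there: $\End_{A}S_{i}\cong D_{i}^{op}$ rather than $D_{i}$, but this is immaterial, since all you need is that $S$ is one-dimensional over the division ring $\End_{A}S$, and any two one-dimensional spaces over a division ring are isomorphic (over the paper's algebraically closed field everything is $k$ in any case).
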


\begin{proof}

(a) and (b) {follow using the} Nakayama equivalence $^{\ast }D$ from the category of injective $\Lambda$-modules to the category of projective $\Lambda$-modules, and
(c) is a direct consequence of (a).

(d) Let $_{A}A=S\oplus Q.$ Then $_{\text{End}_{A}S}S\cong \Hom _{A}(A,S)=$
End$_{A}S\oplus \Hom _{A}(Q,S)=$ End$_{A}S$ , \MIP{since $\Hom_{A}(Q,S)=0$, because $A$ is
basic}. 
\end{proof}

Let $P$ be a projective $\Lambda -$module.  We recall that $\Hom _{\Lambda }(P,-):\mod 
P\longrightarrow \mod ($End$_{\Lambda }P)^{op}$ is an equivalence, where $\mod P$ is the full subcategory of $\mod \Gamma$ consisting of the modules with a presentation in $\add P$.
Now we
can take $\Gamma =($End$_{\Lambda }P)^{op},$  with the projective $P$  such that $\mod P$ contains the fundamental domain \ $\add (
\ind H\cup \{\tau ^{-1}_\La DH\})$ \ of $\mathcal C_H$. We want to choose $P$ as
small as possible. Since $\mod P\subseteq $ Gen$P,$ it is clear that add%
$P$ must contain $H\oplus P_{0}(\tau _\La ^{-1}DH).$ Next we show that this is
enough.

\medskip

We denote by $\Delta $ the sum of the nonisomorphic simple projective \MIP{$H-$modules}. That
is, $\Delta $ is a basic $\Lambda -$module such that \MIP{$\add \Delta = \add \soc
H = {\add \soc
\Lambda} $. Let $_{\Lambda }
\overline{P}=$ $H\oplus I_{0}^\Lambda (\Delta ).$ {We will prove in the next proposition that  $P_{0}(\tau_\Lambda ^{-1}DH) = I^\Lambda_{0}(H)$}, and that the basic projective module $\overline P$ has the required properties.}

\begin{prop}
\label{T} Let $_{\Lambda }\overline{P}=H\oplus I\MIP{^\Lambda _{0}}(\Delta ).$ Then:

(1) $\add \overline{P}$ is closed under predecessors in $\mathcal{P}(\Lambda
) = \{$projective $\Lambda$-modules$\}. $

(2) $\mod \overline{P}=$ Gen$\overline{P}=\{(X,Y,f)\in \mod 
\Lambda :X\in \add \Delta \}.$

(3) Let $P$ be an indecomposable projective $H-$module. Then \MIP{$\tau_{\Lambda}^{-1}\dual\Hom_{H}(P,H)=(\soc P,I_{1}^H(P),\pi)$, where $0\ra P \ra I_{0}^H (P)\xrightarrow{\pi} I_{1}^H (P)\ra 0$} is a minimal injective resolution in {$\mod H$}.

(4) $I_{0}^\Lambda  (H)=P_{0}(\tau_\Lambda ^{-1}DH).$

(5) $\mod H\cup \{\tau_\Lambda  ^{-1}DH\}\subseteq \mod \overline{P}.$


(6) If $\overline{Q}$ is a projective $\Lambda -$module such that $\mod 
H\cup \{\tau_\Lambda  ^{-1}DH\}\subseteq \mod \overline{Q},$ then $\overline{P}%
$ is a direct summand of $ \overline{Q}.$
\end{prop}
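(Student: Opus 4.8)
The plan is to establish the six statements by unwinding the triple description of $\Lambda$-modules and using the explicit form of the projective-injective $\Lambda$-modules recalled at the start of Section 2. The overarching strategy is: first pin down the projective-injective module $I_0^\Lambda(\Delta)$ and, more generally, the injective envelopes $I_0^\Lambda(S)$ of simple projectives, then bootstrap from there. Recall that the indecomposable projective-injective $\Lambda$-modules are exactly those of the form $(P, DH\otimes_H P, 1)$ for $P$ indecomposable projective in $\mod H$, and that the socle of $\Lambda$ is concentrated in the ``$H$ on top'' copy coming from $\soc H$. So $\add\Delta=\add\soc H=\add\soc\Lambda$ as stated, and the injective envelope of a simple projective $H$-module $S=(S,0,0)$ should be a projective-injective $\Lambda$-module $(P,DH\otimes P,1)$ with $P=P_0^H(S)$ the projective cover of $S$ in $\mod H$; this is the content underlying parts (3) and (4).

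For the ordering results (1) and (2), I would argue as follows. For (2), a module $(X,Y,f)$ lies in $\mathrm{Gen}\,\overline P$ iff it is generated by $H\oplus I_0^\Lambda(\Delta)$; since $H=(H,0,0)$ generates precisely the modules supported in the top component and $I_0^\Lambda(\Delta)=(\Delta', DH\otimes\Delta',1)$ has first component in $\add\Delta$, surjections from $\add\overline P$ force $X\in\add\Delta$, and conversely any $(X,Y,f)$ with $X\in\add\Delta$ is so generated. The identity $\mod\overline P=\mathrm{Gen}\,\overline P$ then follows because $\overline P$ is projective and the class $\{X\in\add\Delta\}$ is closed under the relevant submodules/presentations. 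For (1), I would note that a predecessor of a projective-injective $(P,DH\otimes P,1)$ inside $\mathcal P(\Lambda)$ is again projective, hence either an $H$-projective $(P',0,0)$ (automatically in $\add\overline P$ as a summand of $H$) or another projective-injective $(P'',DH\otimes P'',1)$; the point is that a nonzero map from one projective-injective into $I_0^\Lambda(\Delta)$ forces the source to also be of the form $I_0^\Lambda(S')$ for a simple projective $S'$, which is exactly Lemma \ref{L1}(a). This is where Lemma \ref{L1} earns its keep.

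The heart of the proposition is (3), the computation $\tau_\Lambda^{-1}D\Hom_H(P,H)=(\soc P, I_1^H(P),\pi)$. Here I would start from the minimal injective resolution $0\to P\to I_0^H(P)\xrightarrow{\pi} I_1^H(P)\to 0$ in $\mod H$ (valid because $H$ is hereditary, so $\mathrm{id}_H P\le 1$) and compute $\tau_\Lambda^{-1}$ directly. Since the $H$-injective $I_0^H(P)$ is not $\Lambda$-injective, $\tau_\Lambda^{-1}$ of the relevant injective is indecomposable and can be read off from an injective copresentation; the first component $\soc P$ and the map $\pi$ come out by dualizing the projective presentation and matching it against the triple formalism, exactly as in the computation at the end of Proposition \ref{W}. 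Once (3) is in hand, (4) is the special case assembling over all indecomposable projectives $P$: $P_0(\tau_\Lambda^{-1}DH)$ is the projective cover of the direct sum of the modules in (3), and matching first components $\soc P$ against $\Delta$ identifies it with $I_0^\Lambda(H)$. Then (5) is immediate: $\mod H\subseteq\mod\overline P$ since $\mod H$ is the modules with $X=0\in\add\Delta$, and $\tau_\Lambda^{-1}DH\in\mod\overline P$ follows from (4) because its projective cover lies in $\add\overline P$. Finally (6) is a minimality statement: any $\overline Q$ with $\mod H\cup\{\tau_\Lambda^{-1}DH\}\subseteq\mod\overline Q$ must, via $\mod\overline Q\subseteq\mathrm{Gen}\,\overline Q$ and the containment $\mathrm{add}\,\overline Q\ni H$ (to generate $\mod H$) and $\mathrm{add}\,\overline Q\ni P_0(\tau_\Lambda^{-1}DH)=I_0^\Lambda(\Delta)$ (to generate $\tau_\Lambda^{-1}DH$), contain $\overline P=H\oplus I_0^\Lambda(\Delta)$ as a summand, since both are basic projective.

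I expect the main obstacle to be part (3): correctly executing the $\tau_\Lambda^{-1}$ computation in the triple language and verifying that the output really is the claimed triple $(\soc P, I_1^H(P),\pi)$, including identifying the structure map $\pi$ rather than merely the two components. The cleanest route is probably to dualize and instead compute $\tau_{\Lambda^{op}}$ of a projective, using the explicit duality $D(X,Y,f)=(DY,DX,Df)$ recorded earlier, so that $\tau_\Lambda^{-1}$ becomes $D\tau_{\Lambda^{op}}D$ and the almost-split structure is transparent; the functorial isomorphism $DH\otimes P\cong D(P^*)$ already used in Proposition \ref{W} will be the key bookkeeping tool. Everything else reduces to closure properties of $\mathrm{Gen}$ and to Lemma \ref{L1}, which I regard as routine once (3) and (4) are established.
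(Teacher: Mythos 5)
Your outline follows the paper's own proof almost step for step: (1) is reduced to a nonzero map between projective-injectives and settled by Lemma \ref{L1}(a); (2) comes from the triple description of $\overline{P}$ (the paper gets $\mod\overline{P}=\operatorname{Gen}\overline{P}$ from (1), you instead verify closure of the class $\{X\in\add\Delta\}$ directly, which also works); (3)--(4) are done by the dualized transpose computation in the triple formalism --- the paper computes $\tau_\Lambda^{-1}\dual\Hom_H(P,H)$ as the transpose of $(P^{\ast},0,0)$ over $\Lambda^{op}$ from its minimal projective presentation and applies $\Hom_\Lambda(-,\Lambda)$, which is exactly the route you describe via $\dual\tau_{\Lambda^{op}}\dual$ and the identity $DH\otimes P\cong\dual(P^{\ast})$; summing over all $P$ gives the resolution $0\to H\to I_0^{\Lambda}(\soc H)\to\tau_\Lambda^{-1}\dual H\to 0$, from which (4), (5) and (6) follow as you say. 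So this is the same proof in outline, not a genuinely different one.

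Two concrete cautions before it counts as complete. First, you have the triple coordinates backwards in places: in this paper $\mod H$ embeds as the modules $(0,Y,0)$, so the simple projective is $(0,S,0)$ and $_{\Lambda}H=(0,H,0)$. As literally written, your $(S,0,0)$ is a simple $\Lambda$-module whose injective envelope is $(I_0^{H}(S),0,0)$ --- injective but \emph{not} projective --- so your assertion that its envelope is $(P,DH\otimes P,1)$ is false for that module; the statement parts (3)--(4) actually need is $I_0^{\Lambda}((0,S,0))=(S,DH\otimes S,1)$ (and then $P_0^{H}(S)=S$, as you note). Second, your ``cleanest route'' of computing $\tau_{\Lambda^{op}}$ \emph{of a projective} is misphrased: $P^{\ast}=\Hom_H(P,H)$ is projective over $H^{op}$ but not over $\Lambda^{op}$ (if it were, its translate would vanish and (3) would be empty). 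What the computation really requires is the nontrivial minimal $\Lambda^{op}$-presentation $(0,P_0(P^{\ast}\otimes_H\dual H),0)\to(P^{\ast},P^{\ast}\otimes_H\dual H,1)\to(P^{\ast},0,0)\to 0$ together with the chain $(P_0(P^{\ast}\otimes_H\dual H))^{\ast}\cong(P_0(\dual P))^{\ast}\cong(\dual I_0(\soc P))^{\ast}\cong P_0(\soc P)=\soc P$, where hereditariness of $H$ enters in the last step; this identification of the projective cover of the transpose as $I_0^{\Lambda}(\soc P)$ is the crux you flag but assert rather than derive. Since you name the right tools and the rest of your argument consumes (3)--(4) correctly, I regard this as a deferred execution of the paper's computation rather than a wrong approach, but it is the one piece that must still be carried out.
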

\begin{proof}

(1) 
 Let $Q\longrightarrow P$ be a nonzero morphism between indecomposable
projective $\Lambda -$modules, with $P\in \add \overline{P}.$ We have to
prove that $Q\in \add \overline{P}.$ We may assume that $P,Q$ are projective-injective. Hence $P$ is in add$I_0^\Lambda (\Delta)$, and the result follows from Lemma \ref{L1}(a).

(2) The first equality follows from (1). Now, $\overline{P}=(\Delta
,H\oplus I^H_{0}(\Delta ), \left( \begin{array}{c}0 \\ 1 \end{array} \right))
$, where we identify $DH\otimes_H \Delta$  with $I^H_0(\Delta)$. Thus $(X,Y,f)\in $ Gen$\overline{P}%
$ if and only if $ X\in $ Gen$\Delta (=\add \Delta ).$

(3) and (4). We proceed to calculate {$\tau_\Lambda ^{-1}\dual\Hom_{H}(P,H)=Tr_\Lambda\Hom_{H}(P,H)$. Let \MIP{$P^* =\Hom_{H}(P,H). $ Since $\dual\Hom_{H}(P,H)\cong (0,\dual P^{\ast},0)$  in $\mod \Lambda$ }, then $\Hom_{H}(P,H)\cong(P^{\ast},0,0)$, and the following is a minimal projective presentation:
$$(0,P_{0}(P^{\ast}\otimes_{H}\dual H),0)\longrightarrow (P^{\ast},P^{\ast}\otimes_{H}\dual H,1)\longrightarrow (P^{\ast},0,0)\longrightarrow 0.$$
Applying $\Hom_{\Lambda}(-,\Lambda)$, we obtain 
{\small {$$0\ra(0,P,0)\rightarrow ((P_{0}(P^{\ast}\otimes_{H}\dual H))^{\ast},\dual H\otimes_{H}(P_{0}(P^{\ast}\otimes_{H}\dual H))^{\ast},1)\rightarrow Tr_\Lambda\Hom_{H}(P,H)\rightarrow 0.$$}}
Since $P^{\ast}\otimes_{H}\dual H\cong\dual P$, then $(P_{0}(P^{\ast}\otimes_{H}\dual H))^{\ast}\cong(P_{0}(DP))^{\ast }=(\dual I_{0}(P))^{\ast}=(\dual I_{0}(\soc P))^{\ast}=P_{0}(\soc P)=\soc P$. (We used \MIP{ that $H$ is }hereditary in the last step). Hence $P_{0}(Tr_\Lambda \Hom_{H}(P,H))=(\soc P,I_{0}(\soc P),1)=I_{0}^{\Lambda}(\soc P)$. \MIP{
The last equality follows  from the description of injective $\Lambda$-modules as triples, and the fact that $\soc P$ is a projective $H$-module.  Therefore the above sequence is
 $$0\ra(0,P,0)\rightarrow I_0^\Lambda(\soc P)\rightarrow \tau^{-1}_\Lambda D\Hom_{H}(P,H)\rightarrow 0,$$ 
so
}  $\tau^{-1}\dual\Hom_{H}(P,H)=(\soc P,I_{1}(\MIP{ P}),\pi )$.} This establishes (3). Adding all the indecomposable projective $H-$modules \MIP{yields } { the projective resolution $$0\lra H\lra I_{0}^{\Lambda}(\soc H)\lra\tau_\Lambda^{-1}\dual H\lra 0\hskip 1in (*)$$ and proves (4).

} 
(5) \JC{We have} $\mod H\subseteq\mod\overline{P}$, {since} $H$ is a direct summand of $\overline{P}$ and $\mod H$ is closed under predecessors in $\mod\Lambda$. \JC{Now the projective resolution} \MIP{$(*)$

shows that $\tau_\Lambda ^{-1}DH\in\mod\overline{P}$.}


(6) follows from (2), (4) and (5). 
\end{proof}

\bigskip

Now we define $\Gamma =\End _{\Lambda }(\overline{P})^{op}$. {The next proposition describes $\Gamma$ as a triangular matrix ring.}
\vskip.25in

\begin{prop} 
\label{P1} Let $\Gamma =$ End$_{\Lambda }(\overline{P})^{op}$. Then:

(a) $\Gamma $ is isomorphic to the triangular matrix ring $
\begin{pmatrix}
K & 0 \\ 
J & H
\end{pmatrix}
,$ where $K=\End _{H}\Delta ^{op}$ is a basic semisimple algebra, and $
J=I_{0}^{H}(\Delta ).$ In particular, the $\Gamma -$modules can be described  in terms of triples $(_{K}X,_{H}Y,f)$, with $f:J\otimes
_{K}X\longrightarrow Y.$ 

(b) For $X\in \add\Delta $ there is an isomorphism $J\otimes _{K}$Hom$
_{H}(\Delta ,X)\overset{\psi }{\simeq }DH\otimes _{H}X$ of $H-$modules which
is functorial in $X.$

(c) $(_{H}X, _HY,f)\longmapsto (_{K}$Hom$_{H}(\Delta ,X),_{H}Y,f\psi )$ is
an equivalence from $\mod \overline{P}$ to $\mod \Gamma .$
\end{prop}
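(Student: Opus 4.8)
The plan is to establish the three parts in order, using the general theory of triangular matrix rings (as in \cite{ARS}, III) together with the explicit calculations already carried out in Proposition~\ref{T}.

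\medskip

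For part~(a), the key is to compute $\End_\Lambda(\overline P)^{\op}$ blockwise from the decomposition $\overline P = H \oplus I_0^\Lambda(\Delta)$. First I would observe that since $H$ is closed under predecessors in $\mod\Lambda$ and $I_0^\Lambda(\Delta)$ is projective-injective (hence has no nonzero maps \emph{to} $H$, as $H$ contains no injective summands), the endomorphism ring is lower triangular: one entry $\Hom_\Lambda(H,H) \cong H$, one entry $\Hom_\Lambda(I_0^\Lambda(\Delta),I_0^\Lambda(\Delta))$, the off-diagonal $\Hom_\Lambda(I_0^\Lambda(\Delta),H) = 0$ by Lemma~\ref{L1}(a) applied to the simple projectives summing to $\Delta$, and the remaining entry $\Hom_\Lambda(H, I_0^\Lambda(\Delta))$. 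By Lemma~\ref{L1}(b),(c), $\End_\Lambda I_0^\Lambda(\Delta) \cong \End_H \Delta =: K$, which is basic semisimple since $\Delta$ is a sum of nonisomorphic simples. The off-diagonal bimodule is then identified by computing $\Hom_\Lambda(H, I_0^\Lambda(\Delta))$; using that $I_0^\Lambda(\Delta) = (\Delta, I_0^H(\Delta), \binom{0}{1})$ and that maps out of $H$ land in the second coordinate, this yields $J = I_0^H(\Delta)$ as the $(H,K)$-bimodule. Taking $\op$ and reading off the triangular matrix form gives the claimed presentation, with the triple description of $\Gamma$-modules being the standard one for such rings.

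\medskip

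For part~(b), I would construct the functorial isomorphism $\psi$ directly. For $X \in \add\Delta$, evaluation gives a natural map $J \otimes_K \Hom_H(\Delta, X) \to X'$, but the cleaner route is to note that both sides are additive functors of $X$ on $\add\Delta$, so by additivity it suffices to produce the isomorphism for $X = \Delta$ (each indecomposable simple projective summand) and check naturality. On $\Delta$ itself, $\Hom_H(\Delta,\Delta) \cong K$ by Lemma~\ref{L1}(b),(d), so the left side collapses to $J \otimes_K K \cong J = I_0^H(\Delta) \cong DH \otimes_H \Delta$, the last isomorphism being the Nakayama identification of $DH \otimes_H -$ on projectives already invoked in Proposition~\ref{T}. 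Functoriality follows because all the identifications used are natural in $X$.

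\medskip

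Part~(c) then assembles the pieces. The assignment sends the triple $(_HX,{}_HY,f)$ to $(_K\Hom_H(\Delta,X),{}_HY,f\psi)$, and I would verify it is the composite of the standard equivalence $\Hom_\Lambda(\overline P,-)\colon \mod\overline P \xrightarrow{\sim} \mod\Gamma$ with the triple-level rewriting supplied by $\psi$. Concretely: $\mod\overline P = \{(X,Y,f) : X \in \add\Delta\}$ by Proposition~\ref{T}(2), and applying $\Hom_\Lambda(\overline P,-)$ translates the first coordinate $X$ (an $H$-module in $\add\Delta$) into the $K$-module $\Hom_H(\Delta,X)$ while fixing $Y$; the structure map transforms by precomposition with $\psi$ so that it becomes a map $J \otimes_K \Hom_H(\Delta,X) \to Y$, matching the triple description of $\Gamma$-modules from~(a). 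Since $\Hom_\Lambda(\overline P,-)$ is an equivalence and $\psi$ is a functorial isomorphism, the composite is an equivalence.

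\medskip

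I expect the main obstacle to be part~(b): pinning down the isomorphism $\psi$ so that it is genuinely functorial in $X$ and simultaneously compatible with \emph{both} the $K$-action on the left and the identification $DH \otimes_H \Delta \cong I_0^H(\Delta)$ on the right. The bookkeeping of bimodule structures — keeping track of which side $K$ acts on through $\Hom_H(\Delta,-)$ versus through $J$, and confirming the evaluation map respects these — is the delicate point; once $\psi$ is correctly set up on the generator $\Delta$ and shown natural, parts~(a) and~(c) reduce to standard triangular-ring bookkeeping and the already-established equivalence $\Hom_\Lambda(\overline P,-)$.
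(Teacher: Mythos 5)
Your proposal is correct and takes essentially the same route as the paper's proof: blockwise computation of $\End_\Lambda(\overline P)^{\op}$ using Lemma \ref{L1} and $\Hom_\Lambda(I_0^\Lambda(\Delta),H)=0$, reduction of (b) by additivity to simple projective summands with $J\otimes_K K\cong J\cong DH\otimes_H\Delta$ via the Nakayama identification, and recognition of (c) as the standard equivalence $\Hom_\Lambda(\overline P,-)$ rewritten in triples using Proposition \ref{T}(2). One harmless slip: the vanishing $\Hom_\Lambda(I_0^\Lambda(\Delta),H)=0$ is not an instance of Lemma \ref{L1}(a), which concerns maps between injective envelopes of simples; it follows instead from the predecessor-closure of $\mod H$ in $\mod\Lambda$ (or directly from the triple description, since the structure map of $I_0^\Lambda(\Delta)$ is an isomorphism), and since you state the predecessor argument in the same sentence, the misplaced citation does not affect the proof.
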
  

\begin{proof}

(a) Since $_{\Lambda }\overline{P}=$ $H\oplus I_{0}^{\Lambda }(\Delta ),$
 then
 $$\Gamma =\End_{\Lambda }(\overline{P})^{op}\simeq 
\begin{pmatrix}
\text{End}_{\Lambda }H & \text{Hom}_{\Lambda }(I_{0}^{\Lambda }(\Delta ),H)
\\ 
\text{Hom}_{\Lambda }(H,I_{0}^{\Lambda }(\Delta )) & \text{End}_{\Lambda
}I_{0}^{\Lambda }(\Delta )
\end{pmatrix}
^{op}  $$

$\simeq 
\begin{pmatrix}
H^{op} & 0 \\ 
\text{Hom}_{\Lambda }(H,I_{0}^{\Lambda }(\Delta )) & \text{End}_{\Lambda
}I_{0}^{\Lambda }(\Delta )
\end{pmatrix}
^{op}\simeq 
\begin{pmatrix}
\text{End}_{\Lambda }I_{0}^{\Lambda }(\Delta )^{op} & 0 \\ 
\text{Hom}_{\Lambda }(H,I_{0}^{\Lambda }(\Delta )) & H
\end{pmatrix}
.$
\bigskip

Now, by Lemma \ref{L1}, End$_{\Lambda }I_{0}^{\Lambda }(\Delta )\simeq $ End$
_{\Lambda }\Delta \simeq $ End$_{H}\Delta \simeq $ $K^{op}$ is a basic
semisimple algebra.

Finally, since $I_{0}^{\Lambda }(\Delta )=(\Delta ,I_{0}^{H}(\Delta ),1),$
we have Hom$_{\Lambda }(H,I_{0}^{\Lambda }(\Delta ))\simeq $ Hom$%
_{H}(H,I_{0}^{H}(\Delta ))\simeq I_{0}^{H}(\Delta )=J$ as  $H-K$ bimodule.

(b) Since the functors $J\otimes _{K}$Hom$_{H}(\Delta ,-)$ and $DH\otimes
_{H}-$ are additive, we can assume that $X$ is simple projective. By Lemma
\ref{L1}(c), we have $K\simeq \prod\limits_{S\in \text{ind}\Delta }$End$
_{H}(S)^{op},$ so that $J\otimes _{K}$Hom$_{H}(\Delta ,X)\simeq
I_{0}^{H}(X)\otimes _{\text{End}_{H}(X)^{op}}$End$_{H}(X)\simeq $ End$
_{H}(X)\otimes _{\text{End}_{H}(X)}I_{0}^{H}(X)\simeq I_{0}^{H}(X).$ But $
DH\otimes _{H}X$ is also isomorphic to $I_{0}^{H}(X)$ when $X$ is simple
projective.

(c) Let $(_{H}X,_{H}Y,f)\in \mod \overline{P}.$ By Proposition \ref{T}(2),
we have that $X$ is a semisimple projective $H-$module. Now the statement
follows easily from (b). 
\end{proof}
\vskip .17in

{Note that the equivalence  given in  Proposition \ref{P1}(c) is just $ \Hom_\Lambda(\overline P,-): \mod \overline P \rightarrow \mod \Gamma$, stated in terms of triples.} We will identify $\mod \Gamma$ with the full subcategory mod$\overline P$ of mod$\Lambda$. Under this identification, the fundamental domain { $\mathcal D\I{_\Lambda}$ of $\mathcal C_H$} in $\mod \La$  is in $\mod\Gamma$, $_{\Lambda}\Gamma$ = $_\Lambda\overline P$, and $I_0^\Gamma (H)=I_0^\Lambda (H)=P_0^\Lambda (\tau_\Lambda^{-1}\dual H)=P_0^\Gamma (\tau_\Gamma^{-1}\dual H)$. From Proposition \ref{T}{(2), it} follows easily that a minimal $\Lambda$-projective resolution of a $\Gamma$-module $M$ is in $\mod\Gamma$. Hence also $pd_\Gamma M=pd_\Lambda M$ for M in mod$\Gamma$.

\vskip .25 in
\MIP{We illustrate the situation with the following example.}
\begin{exmp}

 For the hereditary algebra $H$ given below we indicate the corresponding algebras $\Lambda$ and   $\Gamma$.
\begin{align*}
&\begin{xy}<0.60cm,0cm>:
0*+[o]{\begin{smallmatrix}\ 3\end{smallmatrix}}="a"; 
(1,1)*+[o]{\begin{smallmatrix}\ 2\end{smallmatrix}}="e";
(-1,1)*+[o]{\begin{smallmatrix}\ 1\end{smallmatrix}}="f";
(-2.5,1)*+[o]{ H :}
\ar@{->}  "f";"a"
\ar@{->}  "e";"a"
\end{xy}
&&   
\begin{xy}<0.60cm,0cm>:
0*+[o]{\begin{smallmatrix}\ 3\end{smallmatrix}}="a"; 
(1,1)*+[o]{\begin{smallmatrix}\ 2\end{smallmatrix}}="e";
(-1,1)*+[o]{\begin{smallmatrix}\ 1\end{smallmatrix}}="f";
(0,2)*+[o]{\begin{smallmatrix}\ 3'\end{smallmatrix}}="g";
(-1,3)*+[o]{\begin{smallmatrix}\ 2'\end{smallmatrix}}="h";
(1,3)*+[o]{\begin{smallmatrix}\ 1'\end{smallmatrix}}="j";
(-2.5,1.1)*+[o]{  \Lambda :}
\ar@{->} "h"; "g"
\ar@{->} "j"; "g"
\ar@{->} "g"; "f"
\ar@{->} "g"; "e"
\ar@{->}  "f";"a"
\ar@{->}  "e";"a"
\ar@{.}  "g";"a"
\ar@{.}  "j";"e"
\ar@{.}  "h";"f"
\end{xy}
&&   
\begin{xy}<0.60cm,0cm>:
0*+[o]{\begin{smallmatrix}\ 3\end{smallmatrix}}="a"; 
(1,1)*+[o]{\begin{smallmatrix}\ 2\end{smallmatrix}}="e";
(-1,1)*+[o]{\begin{smallmatrix}\ 1\end{smallmatrix}}="f";
(0,2)*+[o]{\begin{smallmatrix}\ 3'\end{smallmatrix}}="g";
(-2.5,1.1)*+[o]{  \Gamma :}
\ar@{->} "g"; "f"
\ar@{->} "g"; "e"
\ar@{->}  "f";"a"
\ar@{->}  "e";"a"
\ar@{.}  "g";"a"
\end{xy}
\end{align*}

\end{exmp}

\medskip

{By using $\Gamma$ instead of $\Lambda$ we get improved versions of Propositions \ref{X} and \ref{W}.}

\medskip

\begin{prop}\label{E}

(a) $\Gamma $ is a tilted algebra.

(b) The {set of }indecomposable $\Gamma -$modules with projective dimension $\leq 1$
is closed under predecessors, and consists of the {indecomposable objects in  the} fundamental domain $\mathcal D\I{_\Gamma}$  of $
\mathcal{C}_{H}$ plus the indecomposable projective-injective $\Gamma -$
modules. {In particular, $ \mathcal L_\Gamma$ consists of the indecomposable $\Gamma$-modules of projective dimension at most one.}
\end{prop}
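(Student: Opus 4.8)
The statement (Proposition \ref{E}) has two parts: (a) that $\Gamma$ is tilted, and (b) a description of the indecomposable $\Gamma$-modules of projective dimension at most $1$. I would prove (b) first and deduce most of what is needed, then handle (a). The key tool is the identification, established just before the statement, that $\mod\Gamma$ is (up to equivalence) the full subcategory $\mod\overline P$ of $\mod\Lambda$, that $pd_\Gamma M = pd_\Lambda M$ for $M\in\mod\Gamma$, and that the fundamental domain $\mathcal D_\Gamma$ coincides with $\mathcal D_\Lambda$. So the idea is to transport the already-proved results about $\Lambda$ (Proposition \ref{Z}, Lemma \ref{Y}, Proposition \ref{X}, and Theorem \ref{1.5}) across this identification.

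**Proof of (b).** First I would argue that the indecomposable $\Gamma$-modules of projective dimension $\le 1$ are exactly the objects of $\mathcal D_\Gamma$ together with the indecomposable projective-injective $\Gamma$-modules. By the identification, $pd_\Gamma M \le 1$ iff $pd_\Lambda M \le 1$ with $M\in\mod\overline P$. By Proposition \ref{Z}, the indecomposable $\Lambda$-modules of projective dimension $\le 1$ are precisely those in $\mathcal D_\Lambda$ together with the indecomposable projective-injective $\Lambda$-modules; I must intersect this list with $\mod\overline P$. The domain $\mathcal D_\Lambda=\mathcal D_\Gamma$ lies in $\mod\overline P$ by Proposition \ref{T}(5), so those all survive. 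For the projective-injectives, the projective-injective $\Lambda$-modules are $\add I_0^\Lambda(H)$, and since $I_0^\Gamma(H)=I_0^\Lambda(H)$ is exactly the projective-injective $\Gamma$-module, these too lie in $\mod\overline P$ and remain projective-injective in $\mod\Gamma$; one checks there are no others (a projective-injective $\Gamma$-module must be a projective-injective $\Lambda$-module in $\mod\overline P$). This gives the module-theoretic description. To get closure under predecessors, I would invoke Proposition \ref{X} (transported): any indecomposable $\Gamma$-module $X$ with $pd_\Gamma X=1$ lies in $\mathcal L_\Gamma$, i.e. all its predecessors have projective dimension $\le 1$; combined with the fact that the projective-injectives and the $pd=0$ modules also have all predecessors of projective dimension $\le 1$, closure follows. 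The final sentence, that $\mathcal L_\Gamma$ consists exactly of the modules of projective dimension $\le 1$, then requires the converse inclusion: here the crucial point is that, unlike for $\Lambda$, the projective-injective $\Gamma$-modules now \emph{do} have projective dimension $\le 1$. Indeed $I_0^\Gamma(H)=P_0^\Gamma(\tau_\Gamma^{-1}\dual H)$ is projective, so it has projective dimension $0$; hence the extra indecomposables of $\mathcal L_\Lambda$ that were not of small projective dimension in $\mod\Lambda$ have disappeared, and $\mathcal L_\Gamma$ matches the $pd\le 1$ modules exactly.

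**The main obstacle, and part (a).** The delicate step I expect is verifying that passing to $\mod\overline P$ does not create new predecessors: a path in $\ind\Gamma$ must be compared with a path in $\ind\Lambda$, and since $\mod\overline P$ is not closed under all morphisms of $\mod\Lambda$ (only the reflexive/generated structure is controlled), one must check that predecessors computed inside $\mod\Gamma$ agree sufficiently with those in $\mod\Lambda$ for the transported version of Proposition \ref{X} to apply. This is where I would spend the most care, using that $\mod\Gamma=\mod\overline P$ together with $pd_\Gamma=pd_\Lambda$ to pull back paths. For part (a), that $\Gamma$ is tilted, I would produce an explicit tilting module: the natural candidate is to exhibit $\Gamma$ as $\End_H(T)$ for a tilting $H$-module $T$, or equivalently to use the description of $\Gamma$ as the triangular matrix algebra $\bigl(\begin{smallmatrix}K&0\\J&H\end{smallmatrix}\bigr)$ from Proposition \ref{P1}(a) and recognize this one-point-(or several-point-)extension of $H$ by the injective $J=I_0^H(\Delta)$ as a tilted algebra of the same type as $H$; the global dimension bound $\gldim\Gamma\le 2$ (which itself follows from the refined resolutions, analogous to Proposition \ref{W}) together with the directedness coming from $\Gamma$ being built from $H$ by injective extension of simple projectives gives that $\Gamma$ is tilted of the hereditary type of $H$. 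The cleanest route is to name the tilting module explicitly and cite the standard characterization of tilted algebras.
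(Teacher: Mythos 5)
Your part (b) is essentially the paper's argument, and it works: since $\mod\Gamma$ is identified with the \emph{full} subcategory $\mod\overline P$ of $\mod\Lambda$ and $pd_\Gamma=pd_\Lambda$ there, any path in $\ind\Gamma$ is already a path in $\ind\Lambda$, so the ``delicate step'' you flag is harmless in the only direction needed, and Proposition \ref{X} transports directly when $pd\,Y=1$. For the $pd\,Y=0$ case you only assert that predecessors behave well; the paper's (short) justification is that a non-split map into a projective $Y$ factors through $\rad Y$, and $\rad Y\in\mod H$ both when $Y$ is a projective $H$-module and when $Y$ is a summand of $I_0^\Gamma(\Delta)$, so one lands in $\mod H$, which is closed under predecessors and consists of modules of projective dimension at most $1$. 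One factual slip: the projective-injective $\Lambda$-modules are $\add(H,DH,\mathrm{id})$, not $\add I_0^\Lambda(H)$; only the summands $(P,DH\otimes P,1)$ with $P\in\add\Delta$ lie in $\mod\overline P$, and these are exactly $\add I_0^\Lambda(\Delta)$. Your conclusion survives, but the intermediate identification is wrong.

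The genuine gap is in part (a): you never produce the tilting module, and each route you sketch fails. $\Gamma$ cannot be $\End_H(T)$ for a tilting $H$-module $T$, and cannot be ``tilted of the same type as $H$,'' because $\operatorname{rk}K_0(\Gamma)=\operatorname{rk}K_0(H)+(\text{number of simple projective } H\text{-modules})>\operatorname{rk}K_0(H)$; indeed the paper later exhibits $\Gamma$ as tilted of type $\tilde H$, a hereditary algebra with strictly more vertices than $H$ (see Example \ref{E2}). Your fallback criterion ``$\gldim\Gamma\le 2$ plus directedness implies tilted'' is not a theorem: iterated tilted algebras of Dynkin type of global dimension $2$ are directed yet in general not tilted; what characterizes tilted algebras is the existence of a complete slice (equivalently, a convex tilting module). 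That is precisely what the paper supplies: it names $U=DH\oplus I_0^\Gamma(\Delta)$, checks $pd_\Gamma U\le 1$ (since $I_0^\Gamma(\Delta)$ is projective and $H$ is hereditary), $\Ext^1_\Gamma(U,U)=\Ext^1_H(DH,DH)=0$ (using that $\mod H$ is closed under predecessors), the count $|\ind\add U|=\operatorname{rk}K_0(\Gamma)$, and then verifies \emph{convexity}: a path between summands of $U$ either stays among injective $H$-modules, or ends at a summand of $I_0^\Gamma(\Delta)$, in which case the last map factors through the radical, an injective $H$-module, reducing to the first case; it concludes by \cite{APT}, Lemma 2.1. Without naming $U$ and proving convexity --- the actual content of (a) --- your proposal does not establish that $\Gamma$ is tilted.
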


\begin{proof}

(a)
Let $U =DH\oplus I_{0}^\Gamma(\Delta ).$  By \cite{APT}, Lemma 2.1, it suffices to show that $U$ is a convex tilting $\Gamma$-module.

We have $pd _\Gamma U=pd_H(DH)\leq 1$  because $I_{0}^\Gamma(\Delta )$ is projective and $H$ is hereditary.
 Since  $\mod H$ is closed under predecessors, then Ext$_{\Gamma }^{1}(U,U)=$ Ext$_{\Gamma }^{1}(DH,DH)=$ Ext$
_{H}^{1}(DH,DH)=0$. 
Finally, $\left\vert \ind \add U\right\vert =$ rk K$_{0}(\Gamma )$, where K$_{0}(\Gamma )$ denotes the Grothendieck group of $\Gamma$.
Hence $U$ is tilting. 

Now let us see that $U$ is convex:

Let $T_{0}\overset{f_{1}}{\longrightarrow }T_{1}\overset{f_{1}}{%
\longrightarrow }...\overset{f_{s}}{\longrightarrow }T_{s}$ be a path in ind$%
\Gamma $ with $T_{0},T_{s}\in \add U$, where we assume that all $f_i$ are \MI{non-isomorphisms}. If $T_{s}\in \mod H,$ then all $%
T_{i}$ are $H-$modules, and therefore they are all $H-$injective. On the other hand, if $T_{s}\notin  \mod H,$ then $%
T_{s}\in \add I_{0}(\Delta ).$ Then $T_{s}$ is projective and $f_{s}$
factors through $\rad T_{s}$. Since $\rad T_{s}\in \mod H$ and is an injective $H$-module, we are in the
previous case, so we are done.

(b) Let $f:X\rightarrow Y$ be a non-zero map with $X,Y\in \ind \Gamma $
and $pd\,Y\leq 1.$ If $pd\, Y=1$ then, by Proposition \ref{X}, $pd\, X\leq 1.$ Thus we can
assume $Y$ is projective and $f$ is not surjective. Hence $f$ factors
through $\rad Y.$ Since $\rad Y\in \mod  H$, the proposition follows. 
\end{proof}
\medskip
\begin{cor}\label{2.5}
Let $\mathcal F = \{X\in {\rm \mod}\Gamma: pdX \leq 1\}$, $\mathcal T = \add (\ind \Gamma  \setminus
 \mathcal F) = \add \{ X\in \ind\Gamma : pdX =2\}$. Then $(\mathcal T, \mathcal F)$ is a {split} torsion pair in $\mod \Gamma$.
\end{cor}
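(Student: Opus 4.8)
The plan is to recognize $(\mathcal T,\mathcal F)$ as the torsion pair attached to the torsion-free class $\mathcal F$, and then to use the global dimension of $\Gamma$ to see that it splits. Recall that over an artin algebra a full subcategory is the torsion-free class of a (uniquely determined) torsion pair precisely when it is closed under submodules and extensions, the associated torsion class then being ${}^{\perp}\mathcal F=\{M\in\mod\Gamma:\Hom_\Gamma(M,F)=0 \text{ for all } F\in\mathcal F\}$. So first I would verify these two closure properties for $\mathcal F=\{X:pd\,X\le 1\}$, then prove ${}^{\perp}\mathcal F=\mathcal T$, and finally invoke $\gldim\Gamma\le 2$ (which holds since $\Gamma$ is tilted by Proposition~\ref{E}(a)) to obtain splitting.

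Closure of $\mathcal F$ under extensions is immediate from the subadditivity of projective dimension along a short exact sequence. The substantial point is closure under submodules, and this is where Proposition~\ref{E}(b) does the work. Given $N\subseteq M$ with $M\in\mathcal F$, it suffices to bound the projective dimension of each indecomposable summand $N_0$ of $N$. The composite $N_0\hookrightarrow N\hookrightarrow M$ is nonzero, so it has a nonzero component $N_0\to M_j$ into some indecomposable summand $M_j$ of $M$; since $pd\,M_j\le 1$ and $N_0$ is an indecomposable predecessor of $M_j$, the closure under predecessors in Proposition~\ref{E}(b) gives $pd\,N_0\le 1$. Hence $pd\,N\le 1$ and $N\in\mathcal F$. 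I expect this submodule-closure step, resting on the predecessor-closedness of the modules of projective dimension at most one, to be the main obstacle; everything else is formal torsion-theoretic bookkeeping.

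It remains to identify ${}^{\perp}\mathcal F$ with $\mathcal T$. The same predecessor argument shows $\Hom_\Gamma(\mathcal T,\mathcal F)=0$: a nonzero map from an indecomposable $T$ with $pd\,T=2$ to an indecomposable $F\in\mathcal F$ would exhibit $T$ as a predecessor of $F$, forcing $pd\,T\le 1$, a contradiction. Thus $\mathcal T\subseteq{}^{\perp}\mathcal F$. Conversely, if $M\in{}^{\perp}\mathcal F$ had an indecomposable summand $M_0$ with $pd\,M_0\le 1$, then $M_0\in\mathcal F$ and the projection $M\to M_0$ would be a nonzero map into $\mathcal F$, a contradiction; since $\gldim\Gamma\le 2$, every summand of $M$ therefore has projective dimension exactly $2$, so $M\in\mathcal T$ and ${}^{\perp}\mathcal F=\mathcal T$. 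Finally, $\gldim\Gamma\le 2$ forces every indecomposable $\Gamma$-module to have projective dimension $0$, $1$ or $2$, and hence to lie in $\mathcal F$ or in $\mathcal T$; a torsion pair with this dichotomy for indecomposables is split, which completes the proof.
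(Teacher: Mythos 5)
Your proof is correct and takes essentially the approach the paper intends: the corollary is stated there without an explicit argument, being an immediate consequence of Proposition~\ref{E} -- part (b) giving the predecessor-closure of the indecomposables of projective dimension at most one (hence $\Hom_\Gamma(\mathcal T,\mathcal F)=0$ and closure of $\mathcal F$ under submodules), and part (a) giving $\gldim \Gamma\leq 2$ (hence the dichotomy on indecomposables that forces splitting). Your detour through the characterization of torsion-free classes as the subcategories closed under submodules and extensions is just slightly heavier bookkeeping for the same argument, and all the steps check out.
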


\medskip Now we will prove that the global dimension of End$_{\Gamma }(T)$
remains less than or equal to $2$ when $T$ is a tilting $\Gamma -$module. We
will use results from \cite{GHPRU}, which we collect in
the following lemma.
\vskip .15in

\begin{lem}\label{Gastaminza}
Let $A$ be an artin algebra with finite global dimension and $
_{A}T$ be a tilting module such that $pd\, T=1$ and $id\,T=s.$ Let $B= (\End_{A}T)^{op}$.
Then:

\rm{(a)} (\cite{GHPRU} Prop. 2.1) We have  $s\leq $ gldim$\,B\leq s+1.$ 

\rm{(b)} (\cite{GHPRU} Thm. 3.2) If $s\geq 1,$ then $s=$ gldim$\,B$ if and only if
$\Ext_{A}^{s}(\tau T,T)=0. $

\end{lem}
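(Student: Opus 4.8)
The plan is to view both statements as consequences of the derived equivalence attached to the classical tilting module $T$, and to read off $\gldim B$ by transporting $\Ext$-computations from $\mod B$ to $\mod A$. Write $(\mathcal{G},\mathcal{H})$ for the torsion pair in $\mod A$ with $\mathcal{G}=\operatorname{Gen}T=\{M:\Ext^1_A(T,M)=0\}$ and $\mathcal{H}=\{M:\Hom_A(T,M)=0\}$, and $(\mathcal{X},\mathcal{Y})$ for the induced Brenner--Butler torsion pair in $\mod B$, where $\mathcal{Y}=\Hom_A(T,\mathcal{G})$ and $\mathcal{X}=\Ext^1_A(T,\mathcal{H})$. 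Because $\operatorname{pd}_A T=1$, the triangle equivalence $R\Hom_A(T,-)\colon \mathcal{D}^b(\mod A)\to\mathcal{D}^b(\mod B)$ sends an object $M\in\mathcal{G}$ to the module $\Hom_A(T,M)$ in degree $0$ and an object $N\in\mathcal{H}$ to the module $\Ext^1_A(T,N)$ in degree $1$. Comparing morphism spaces in the derived category then produces a dictionary: $\Ext^i_B(\Hom_A(T,M),\Hom_A(T,N))\cong\Ext^i_A(M,N)$ for $M,N\in\mathcal{G}$; $\Ext^i_B(\Ext^1_A(T,M),\Hom_A(T,N))\cong\Ext^{i-1}_A(M,N)$ for $M\in\mathcal{H}$, $N\in\mathcal{G}$; and the two remaining combinations with a shift in the opposite direction. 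Since every $B$-module is an extension of an object of $\mathcal{Y}$ by one of $\mathcal{X}$, the number $\gldim B$ is the supremum of the degrees occurring in these four families.

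For part (a) the decisive estimate is that $\id_A N\le s$ for every $N\in\mathcal{G}$. I would prove this by taking a surjective $\add T$-approximation $0\to K\to T_0\to N\to 0$ with $K\in\mathcal{G}$ (available because $\operatorname{pd}_A T=1$ makes $\mathcal{G}$ closed under kernels of such epimorphisms), and then observing that $\Ext^{s+1}_A(X,N)$ embeds into $\Ext^{s+2}_A(X,K)$ for all $X$, since $\Ext^{>s}_A(X,T_0)=0$; iterating and invoking $\gldim A<\infty$ forces $\Ext^{s+1}_A(X,N)=0$. A dual estimate bounds the homological dimension of the objects of $\mathcal{H}$. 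Feeding these into the dictionary, the $\mathcal{Y}$--$\mathcal{Y}$ extensions vanish above degree $s$, and among the mixed families only $\Ext^{i}_B(\mathcal{X},\mathcal{Y})\cong\Ext^{i-1}_A(\mathcal{H},\mathcal{G})$ can survive in degree $s+1$; this yields $\gldim B\le s+1$. For the reverse inequality $s\le\gldim B$, I would use that $T\in\mathcal{G}$ with $\id_A T=s$, pick a module $Z$ with $\Ext^s_A(Z,T)\neq 0$, and transport the torsion or torsion-free part of this nonzero class through the dictionary to obtain a nonvanishing $\Ext^s_B$ (or $\Ext^{s+1}_B$).

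For part (b), by (a) it only remains to decide between $s$ and $s+1$, i.e. whether some degree-$(s+1)$ extension group of $B$ is nonzero. The estimates above localise the question to the extremal family $\Ext^{s+1}_B(\mathcal{X},\mathcal{Y})\cong\Ext^s_A(\mathcal{H},\mathcal{G})$, which, because $\id_A N\le s$ for $N\in\mathcal{G}$, is the top $\Ext$ from the torsion-free class into the torsion class. The concluding step is to show that this entire family vanishes exactly when the single group $\Ext^s_A(\tau T,T)$ does. Here I would invoke the standard facts $\mathcal{G}=\operatorname{Gen}T$ and $\mathcal{H}=\operatorname{Cogen}(\tau T)$ (with $T\in\mathcal{G}$ and $\tau T\in\mathcal{H}$) together with the Auslander--Reiten formula $D\Ext^1_A(X,Y)\cong\overline{\Hom}_A(Y,\tau X)$, so that at the extremal degree $s$ the functor $\Ext^s_A(-,-)$ turns the defining (co)generating sequences by $T$ and $\tau T$ into surjections and injections; nonvanishing anywhere in $\Ext^s_A(\mathcal{H},\mathcal{G})$ then propagates to the representative pair $(\tau T,T)$.

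The step I expect to be the main obstacle is precisely this last reduction in (b): proving that $\Ext^s_A(\tau T,T)$, rather than merely being one member of the family $\Ext^s_A(\mathcal{H},\mathcal{G})$, detects the vanishing of the whole family. This rests on the sharp (co)generation descriptions of $\mathcal{G}$ and $\mathcal{H}$ and on an exactness analysis of the top nonvanishing $\Ext^s_A(-,-)$ along the $\add T$- and $\add\tau T$-(co)resolutions, combined with the Auslander--Reiten formula to convert the computation into the stated form. The remaining work, namely the dual homological estimate for $\mathcal{H}$ and the verification that the $\mathcal{X}$--$\mathcal{X}$ and $\mathcal{Y}$--$\mathcal{X}$ families cannot reach degree $s+1$ on their own, is more routine but must be carried out to secure the clean bound $\gldim B\le s+1$.
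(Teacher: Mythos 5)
A preliminary remark: the paper does not prove this lemma at all --- it is imported verbatim from \cite{GHPRU} (Prop.~2.1 and Thm.~3.2), whose arguments are module-theoretic, phrased directly in terms of the functors $\Hom_A(T,-)$ and $\Ext^1_A(T,-)$. So your derived-equivalence route is necessarily different from anything in the paper, and the only question is whether it closes.

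Most of it does. The four-family dictionary is correct, the d\'evissage of $\gldim B$ through the torsion pair $(\mathcal X,\mathcal Y)$ is legitimate, your estimate $\id_A N\le s$ for $N\in\mathcal G$ is proved exactly right (and is where the hypothesis $\gldim A<\infty$ genuinely enters), and the lower bound $s\le\gldim B$ transports correctly. But the two steps you defer contain real gaps, and the tools you name for them would not work as stated. The ``dual estimate'' for $\mathcal H$ is the wrong target: no useful bound on the objects of $\mathcal H$ by themselves is available, nor is one needed. What kills $\Ext^{s+1}_B(\mathcal Y,\mathcal X)\cong\Ext^{s+2}_A(\mathcal G,\mathcal H)$, and bounds the $\mathcal X$--$\mathcal X$ family above degree $s+1$, is the global bound $\gldim A\le s+1$: apply your own iteration trick to the coresolution $0\to A\to T^0\to T^1\to 0$ with $T^i\in\add T$ to get $\id_A A\le s+1$, and use that $\gldim A=\id_A A$ when the global dimension is finite. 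Moreover, the $\mathcal X$--$\mathcal X$ family \emph{can} reach degree $s+1$, so it is not ``routine'' that it causes no harm; the correct fact is that it is dominated by the family (iv): if $0\to N'\to I'\to C'\to 0$ is an injective envelope of $N'\in\mathcal H$, then $\Ext^1_A(T,C')$ is trapped between $\Ext^1_A(T,I')=0$ and $\Ext^2_A(T,N')=0$ (here $\pdim T\le 1$ is used), so $C'\in\mathcal G$, and $\Ext^{s+1}_A(N,N')$ is a quotient of $\Ext^s_A(N,C')\subseteq\Ext^s_A(\mathcal H,\mathcal G)$.

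Second --- and this is the step you yourself flag as the main obstacle --- the Auslander--Reiten formula is a dead end here: $D\Ext^1_A(X,Y)\cong\overline{\Hom}_A(Y,\tau X)$ computes $\Ext^1$ only and gives nothing at degree $s\ge 2$. The reduction of the whole family $\Ext^s_A(\mathcal H,\mathcal G)$ to the single group $\Ext^s_A(\tau T,T)$ is instead two elementary long-exact-sequence steps whose ingredients you already have. For $M\in\mathcal G$ take a surjective $\add T$-approximation $0\to K\to T_0\to M\to 0$ with $K\in\mathcal G$; since $\id_A K\le s$, the group $\Ext^s_A(N,M)$ is a quotient of $\Ext^s_A(N,T_0)$. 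Then for $N\in\mathcal H=\operatorname{Cogen}(\tau T)$ take $0\to N\to(\tau T)^m\to C\to 0$; since $\id_A T=s$ gives $\Ext^{s+1}_A(C,T)=0$, the group $\Ext^s_A(N,T)$ is a quotient of $\Ext^s_A(\tau T,T)^m$. Hence $\Ext^s_A(\tau T,T)=0$ forces $\Ext^s_A(\mathcal H,\mathcal G)=0$, which by the previous paragraph also kills the $\mathcal X$--$\mathcal X$ family in degree $s+1$, giving $\gldim B\le s$; conversely $\tau T\in\mathcal H$ and $T\in\mathcal G$ place $\Ext^s_A(\tau T,T)$ inside the family, as you say. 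With these three repairs your outline becomes a complete, self-contained proof --- and one genuinely different in method from the cited source.
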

\medskip
\begin{prop}

Let $T$ be a tilting $\Gamma -$module. Then ${\rm gldim}\,  {\rm End}_\Gamma(T)\leq 2.$
\end{prop}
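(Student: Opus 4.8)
The plan is to apply Lemma \ref{Gastaminza} to the algebra $A = \Gamma$ and the tilting module $T$, so the goal reduces to verifying the numerical hypotheses of that lemma and then controlling the relevant Ext-group. First I would record that $\Gamma$ has finite global dimension: by Proposition \ref{E}(a), $\Gamma$ is a tilted algebra, so $\gldim \Gamma \leq 2$ (indeed the whole point of replacing $\Lambda$ by $\Gamma$ was to pass from global dimension $3$ to global dimension at most $2$). This is exactly what is needed for Lemma \ref{Gastaminza} to apply. Next I would check that $pd_\Gamma T = 1$: any tilting $\Gamma$-module $T$ has $pd_\Gamma T \leq 1$ by definition, and since $\Gamma$ is not semisimple (it has genuine projective-injective summands and a nontrivial hereditary part $H$) the projective dimension is exactly $1$, so the hypothesis $pd\, T = 1$ holds.

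With these in place, set $B = (\End_\Gamma T)^{op}$ and let $s = id_\Gamma T$ be the injective dimension of $T$. By Lemma \ref{Gastaminza}(a) we immediately get $\gldim B \leq s+1$, so the entire problem comes down to bounding $s$. Here I would argue that $s \leq 1$, i.e. every tilting $\Gamma$-module has injective dimension at most $1$. The cleanest route is to invoke the dual/symmetric structure: by Corollary \ref{2.5} the modules of projective dimension at most $1$ form the torsion-free class of a split torsion pair, with the torsion class consisting of the finitely many modules of projective dimension $2$; dually, using that $\Gamma$ is tilted and hence has a well-behaved right part, one expects the injective dimensions to be bounded by $1$ on the tilting summands. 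If $s \leq 1$, then Lemma \ref{Gastaminza}(a) gives $\gldim B \leq s + 1 \leq 2$ directly, and we are done without even needing part (b).

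The alternative, which I would keep in reserve, is to use the sharper criterion Lemma \ref{Gastaminza}(b): if it turns out that $s = 2$ cannot be ruled out by a crude injective-dimension bound, then one shows $\gldim B \neq s+1$ by verifying $\Ext^s_\Gamma(\tau T, T) = 0$, which forces $\gldim B = s$. For this I would use that the summands of $T$ of projective dimension $1$ all lie in $\mathcal L_\Gamma$ (by Proposition \ref{E}(b)), together with the description of $\tau T$ and the vanishing of the top $\Ext$ between the relevant pieces, exploiting that $H$ is hereditary and that maps interacting with the projective-injective part are constrained by Proposition \ref{IR}.

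The main obstacle is the bound on $s = id_\Gamma T$. The first two steps ($\gldim \Gamma \leq 2$ and $pd\, T = 1$) are essentially formal once Proposition \ref{E}(a) is in hand, but controlling the injective dimension of an arbitrary tilting module — or equivalently computing $\Ext^s_\Gamma(\tau T, T)$ to apply part (b) — is where the real work lies, since it requires understanding how the torsion pair of Corollary \ref{2.5} interacts with the injective structure of the tilted algebra $\Gamma$ and with the projective-injective modules that play the distinguished role throughout the paper.
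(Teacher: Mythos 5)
Your skeleton matches the paper's: reduce via Lemma \ref{Gastaminza} after noting that $\Gamma$ is tilted (Proposition \ref{E}(a)), so $\gldim\Gamma\leq 2$, and then split on $s=id\,T$. But there are two problems. The smaller one: your justification that $pd\,T=1$ is faulty. Non-semisimplicity of $\Gamma$ does not rule out $pd\,T=0$; the module $T=\Gamma$ itself is a tilting module of projective dimension $0$. The paper says ``we may assume $pd\,T=1$,'' the point being that if $pd\,T=0$ then $T$ is a projective tilting module, so $\add T=\add\Gamma$ and $\End_\Gamma(T)$ is Morita equivalent to $\Gamma$, whence $\gldim\End_\Gamma(T)\leq 2$ trivially. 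This is easily repaired, but as written your step 2 is false.

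The serious gap is in your treatment of $s$. Your primary route asserts that $s\leq 1$ for every tilting $\Gamma$-module, supported only by ``one expects the injective dimensions to be bounded by $1$''; no argument is given, and the paper does not believe this either --- it explicitly allows $s=2$ and handles that case via Lemma \ref{Gastaminza}(b), which is the whole content of the proof. Your reserve route correctly names part (b), but the decisive computation $\Ext_\Gamma^2(\tau T,T)=0$ is left entirely open, and your hints (Proposition \ref{IR}, constraints from projective-injectives) point in an unworked direction. The paper's actual argument is short and specific: since by Proposition \ref{E}(b) the indecomposable modules of projective dimension at most $1$ are closed under predecessors, and $\tau T$ is a predecessor of $T$, one gets $pd\,\tau T\leq 1$; hence the syzygy $\Omega\tau T$ is projective and $\tau\Omega\tau T=0$; then dimension shifting together with the Auslander--Reiten formula yields
$$\Ext_\Gamma^2(\tau T,T)\simeq\Ext_\Gamma^1(\Omega\tau T,T)\simeq \mathrm{D}\Hom_\Gamma(T,\tau\Omega\tau T)=0,$$
so $\gldim B=s=2$. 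This chain --- applying the closure-under-predecessors property to $\tau T$ rather than to the summands of $T$, and converting the $\Ext^2$ to a Hom into $\tau\Omega\tau T$ --- is the missing idea; without it your proposal establishes only the framework, not the proposition.
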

\begin{proof}
  By Proposition \ref{E}(a), $\Gamma $ is a tilted algebra. Thus gldim$\,
\Gamma \leq 2.$ Then we may assume that $pd\, T=1.$ Let $s= id\,T.$ If $s\leq
1$, the proposition follows from Lemma \ref{Gastaminza}(a), so assume $s=2.$ By Lemma
\ref{Gastaminza}(b), it is enough to prove that Ext$_{\Gamma }^{2}(\tau T,T)=0.$ We have
Ext$_{\Gamma }^{2}(\tau T,T)\simeq $ Ext$_{\Gamma }^{1}(\Omega \tau
T,T)\simeq $ $\mathrm{D}$Hom$_{\Gamma }(T,\tau \Omega \tau T).$ Therefore it
suffices to show that $\tau \Omega \tau T=0.$ By Proposition \ref{E}(b), pd$\tau
T\leq 1.$ Hence $\Omega \tau T$ is projective and $\tau \Omega \tau T=0$. 
\end{proof}

{We have seen that} the algebra $\Gamma$ is a tilted algebra and ${\mod}\, \Gamma$ contains the fundamental domain $\mathcal D\I{_\Gamma}$ of the cluster category  $\mathcal C_H$ as a {full subcategory}, closed under predecessors in $\mod \Gamma$.  An analogous statement to Theorem \ref{1.5}(c) also holds for tilting $\Gamma$-modules. We will use a preliminary lemma.

\begin{lem} \label{Juan} 

There is a bijective correspondence between the set of (isoclasses of) basic tilting $\Gamma$-modules and the set of (isoclasses of) basic tilting $\Lambda$-modules, given by \hskip .5in
$ _\Gamma T \longrightarrow _\Lambda T \oplus I_0^\Lambda (DH)/I_0^\Lambda (\Delta) $.

\end{lem}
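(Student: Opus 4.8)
The plan is to establish the bijection by constructing explicit maps in both directions and verifying they are mutually inverse, using the tilting-theoretic properties of the projective-injective modules that have played a central role throughout. First I would analyze the proposed assignment $_\Gamma T \mapsto {}_\Lambda T \oplus I_0^\Lambda(DH)/I_0^\Lambda(\Delta)$ more carefully. Recall that under our standing identification, $\mod\Gamma$ is the full subcategory $\mod\overline{P}$ of $\mod\Lambda$, with $_\Lambda\Gamma = {}_\Lambda\overline{P} = H \oplus I_0^\Lambda(\Delta)$. The key structural fact is that $I_0^\Lambda(\Delta)$ is precisely the collection of indecomposable projective-injective $\Lambda$-modules that lie in $\mod\Gamma$ but are \emph{not} in the fundamental domain, while the full set of indecomposable projective-injective $\Lambda$-modules is $\add I_0^\Lambda(DH)$ (additively generated by $(H,DH,\mathrm{id})$, as noted in Proposition \ref{IR}). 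So the operation should be read as: take a basic tilting $\Gamma$-module $T$, remove the summands lying in $\add I_0^\Lambda(\Delta)$, and replace them by \emph{all} the indecomposable projective-injective $\Lambda$-modules in $\add I_0^\Lambda(DH)$.

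The forward direction requires showing that if $_\Gamma T$ is a basic tilting $\Gamma$-module then the resulting $\Lambda$-module $T'$ is a basic tilting $\Lambda$-module. I would verify the three tilting conditions. For projective dimension, the non-projective-injective summands of $T'$ already lie in $\mathcal L_\Gamma$, hence have $pd_\Gamma \leq 1$, and since $pd_\Gamma M = pd_\Lambda M$ for $M$ in $\mod\Gamma$ (established after Proposition \ref{P1}), they have $pd_\Lambda \leq 1$; the projective-injective summands in $\add I_0^\Lambda(DH)$ trivially have projective dimension zero. For the Ext-vanishing $\Ext^1_\Lambda(T',T') = 0$, the crucial point is that projective-injective modules contribute no extensions in either argument, so this reduces to the Ext-vanishing among the retained non-projective-injective summands, which is inherited from $\Ext^1_\Gamma(T,T) = 0$ together with the compatibility of the embedding $\mod\Gamma \hookrightarrow \mod\Lambda$ on these modules. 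For the count, I would check that replacing the $\add I_0^\Lambda(\Delta)$ summands by the $\add I_0^\Lambda(DH)$ summands keeps the number of non-isomorphic indecomposable summands equal to the number of simple $\Lambda$-modules, using the characterization of tilting modules in Theorem \ref{1.5}(c) and the relation between the ranks of the Grothendieck groups of $\Gamma$ and $\Lambda$.

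The inverse direction sends a basic tilting $\Lambda$-module $S$ to the $\Gamma$-module obtained by discarding the summands in $\add I_0^\Lambda(DH)$ and adjoining $I_0^\Lambda(\Delta)$; here I would invoke Theorem \ref{1.5}(b), which guarantees that every basic tilting $\Lambda$-module has its non-projective-injective summands in $\mathcal L_\Lambda$, and that these coincide with the fundamental-domain objects, which all lie in $\mod\Gamma$. Thus discarding $\add I_0^\Lambda(DH)$ and re-adjoining $I_0^\Lambda(\Delta)$ produces a module in $\mod\Gamma$, and the same three verifications (now in $\mod\Gamma$) show it is tilting. The two constructions are visibly inverse on the level of summands: removing the projective-injectives and swapping $I_0^\Lambda(\Delta) \leftrightarrow I_0^\Lambda(DH)$ is an involution on the bookkeeping of summands. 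The main obstacle I anticipate is the Ext-vanishing compatibility across the two module categories: one must argue carefully that $\Ext^1$ computed among the shared (fundamental-domain) summands agrees whether taken over $\Gamma$ or over $\Lambda$, and that adjoining or deleting projective-injective summands never creates or destroys extensions. This hinges on the fact that $\mod\Gamma$ is closed under predecessors in $\mod\Lambda$ and on projective-injectives being both $\Ext$-projective and $\Ext$-injective, so I would isolate that compatibility as the technical heart of the argument.
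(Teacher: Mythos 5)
Your proposal is correct and follows essentially the same route as the paper: in each direction one checks that the modified module is a basic partial tilting module (the projective-injectives contribute no extensions, and $pd$ and $\Ext^1$ agree across the embedding $\mod\Gamma\subset\mod\Lambda$), and then concludes that it is tilting by counting indecomposable summands, exactly the paper's argument; the only cosmetic difference is that the paper places the non-projective-injective part of a tilting $\Lambda$-module inside $\mod\Gamma$ directly via Proposition \ref{Z} ($pd_\Lambda T'\leq 1$ forces $\tau_\Lambda T'\in\mod H$), where you invoke Theorem \ref{1.5}(a),(b) — the same facts in a different package. One repair is needed in your justification of what you call the technical heart: do not hang the $\Ext$-compatibility on the claim that $\mod\Gamma$ is closed under predecessors in $\mod\Lambda$, which the paper nowhere establishes (Proposition \ref{T}(1) gives closure only within the projectives, and predecessor-closure is asserted only for $\mod H$); instead use the paper's remark after Proposition \ref{P1} that a minimal $\Lambda$-projective resolution of a $\Gamma$-module lies in $\mod\Gamma$, which immediately gives $pd_\Gamma M=pd_\Lambda M$ and $\Ext^1_\Gamma(M,N)\cong\Ext^1_\Lambda(M,N)$ for $M,N$ in $\mod\Gamma$, and is all your argument requires. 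You should also state explicitly the standard fact you use silently (as does the paper) that every indecomposable projective-injective module is a direct summand of any tilting module, since this is what makes your two constructions mutually inverse on the bookkeeping of summands.
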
 
\begin{proof}
Let $T$ be a basic tilting $\Gamma $-module and let $\overline Q=I_0^\Lambda (DH)/I_0^\Lambda (\Delta) $. Then $_\Lambda \Lambda \simeq \overline P \oplus \overline Q$. Hence the basic projective-injective $\Lambda$-module $\overline Q$ is not in mod$\Gamma$, and $T \oplus \overline Q$ is a basic partial tilting $\Lambda$-module. But $|$ind($T \oplus \overline Q)|= |\ind T| + |\ind \overline Q|= |\ind \Gamma| + |\ind \Lambda| - |\ind \overline P| = |\ind \Lambda|$. Thus $T \oplus \overline Q$  is a basic tilting $\Lambda$-module. Conversely, let $T'$ be a basic tilting $\Lambda$-module. Then the basic projective-injective $\Lambda$-module $\overline Q$ is a direct summand of $T'$. Now, since $pd\, T' \leq 1$, by Proposition \ref{Z}, we have that $\tau_\Lambda T'$ is in mod$H$. Thus $T'$ is in add$(\tau^{-1}_\Lambda \mod H\cup \{_\Lambda \Lambda\}) \subseteq \add (\mod\Gamma \cup \{\overline Q \})$, and therefore $T'/\overline Q$ is a basic partial tilting $\Gamma$-module, which must be a tilting $\Gamma$-module, by the  counting argument used before.
\end{proof}

\begin{thm} \label {14} There is a bijective correspondence $\theta$ between the {multiplicity-free} cluster-tilting objects in the cluster category $C_H$ of $H$ and the   {basic}   tilting $\Gamma$-modules. For a cluster-tilting object represented by $T$ in the fundamental domain $\mathcal D\I{_\Gamma}$ of $C_H$, the corresponding tilting $\Gamma$-module is 
$\theta (T) = T \oplus  I^\Gamma_0(\Delta)$.
\end{thm}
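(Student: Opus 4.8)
The plan is to establish the bijection $\theta$ by composing the correspondence between cluster-tilting objects in $\mathcal C_H$ and basic tilting $\Lambda$-modules from Theorem~\ref{1.5}(c) with the bijection between basic tilting $\Lambda$-modules and basic tilting $\Gamma$-modules furnished by Lemma~\ref{Juan}. Concretely, a multiplicity-free cluster-tilting object in $\mathcal C_H$ corresponds, by Theorem~\ref{1.5}(c), to a basic tilting $\Lambda$-module $T'$; and by Lemma~\ref{Juan}, such $T'$ corresponds to the basic tilting $\Gamma$-module $T'/\overline Q$, where $\overline Q = I_0^\Lambda(DH)/I_0^\Lambda(\Delta)$. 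Since both constituent maps are bijections, the composite $\theta$ is a bijection, which gives the first assertion.

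It remains to verify the explicit formula $\theta(T) = T \oplus I_0^\Gamma(\Delta)$ for a cluster-tilting object represented by $T$ in the fundamental domain $\mathcal D_\Gamma$. First I would trace $T$ through Theorem~\ref{1.5}(c): the cluster-tilting object represented by $T$ in $\mathcal D_\Lambda$ corresponds to a basic tilting $\Lambda$-module, and by the description preceding the theorem (the bijection is realized by adjoining the basic projective-injective $\Lambda$-module and viewing indecomposables of $\mathcal D_\Lambda$ as $\Lambda$-modules of projective dimension at most~$1$), this tilting $\Lambda$-module is $T' = T \oplus I_0^\Lambda(DH)$. Next I would apply Lemma~\ref{Juan} in the direction $T' \mapsto T'/\overline Q$, so that the associated $\Gamma$-module is $T'/\overline Q = \bigl(T \oplus I_0^\Lambda(DH)\bigr)/\bigl(I_0^\Lambda(DH)/I_0^\Lambda(\Delta)\bigr)$. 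Under the identification of $\mod\Gamma$ with the full subcategory $\mod\overline P$ of $\mod\Lambda$ (with $_\Lambda\Gamma = {}_\Lambda\overline P = H \oplus I_0^\Lambda(\Delta)$), the summand $I_0^\Lambda(DH)$ of $T'$ that survives the quotient by $\overline Q$ is exactly the basic projective-injective $\Gamma$-module $I_0^\Lambda(\Delta) = I_0^\Gamma(\Delta)$. Thus $\theta(T) = T \oplus I_0^\Gamma(\Delta)$, as claimed.

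The main point requiring care is the bookkeeping of which projective-injective summands lie in $\mod\Gamma$ versus $\mod\Lambda$. The essential facts are that $_\Lambda\Lambda \simeq \overline P \oplus \overline Q$ and $_\Lambda\overline P = H \oplus I_0^\Lambda(\Delta)$, so that $\overline Q$ is precisely the part of the projective-injective $\Lambda$-modules \emph{not} in $\mod\Gamma$, while $I_0^\Lambda(\Delta) = I_0^\Gamma(\Delta)$ is the part that remains. I expect the main obstacle to be confirming that $T$, represented in $\mathcal D_\Gamma$, is carried to the tilting $\Lambda$-module $T \oplus I_0^\Lambda(DH)$ under Theorem~\ref{1.5}(c) \emph{without any change of summands}; this follows because the fundamental domain $\mathcal D_\Lambda$ lies in $\mod\Gamma$ (so the indecomposable summands of $T$ are unchanged in passing between the two module categories, by $pd_\Gamma M = pd_\Lambda M$ for $M$ in $\mod\Gamma$), and the projective-injective summand adjoined in Theorem~\ref{1.5}(c) to complete a cluster-tilting object to a tilting $\Lambda$-module is the full basic projective-injective $\Lambda$-module $I_0^\Lambda(DH)$. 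Once these identifications are pinned down, both the bijectivity and the explicit formula follow immediately from the two quoted results.
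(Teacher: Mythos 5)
Your proposal is correct and follows essentially the same route as the paper: the paper's proof likewise composes the correspondence of Theorem~\ref{1.5}(c) (in the explicit form of \cite{ABST}, Thm.~10, adjoining $I_0^\Lambda(DH)$) with the bijection of Lemma~\ref{Juan}, and tracks the projective-injective summands via $_\Lambda\Lambda \simeq \overline P \oplus \overline Q$ and $I_0^\Gamma(\Delta)=I_0^\Lambda(\Delta)$ exactly as you do. Your bookkeeping of which summands survive the passage between $\mod\Lambda$ and $\mod\Gamma$ matches the paper's argument in both directions.
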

\begin{proof}
Let $T\in \mathcal D\I{_\Gamma}$ be a multiplicity-free cluster-tilting object in $C_H$. By \cite{ABST} Thm. 10, $T \oplus  I^\Lambda_0(DH)$ is a basic tilting $\Lambda$-module.
Thus, by  Lemma \ref{Juan}, $\theta (T) = T \oplus  I^\Gamma_0(\Delta)=  T \oplus  I^\Lambda_0(\Delta)$ is a basic tilting $\Gamma$-module. Conversely, if $T'$ is a basic tilting $\Gamma$-module then, by Lemma \ref{Juan}, $ T '\oplus I_0^\Lambda (DH)/I_0^\Lambda (\Delta) $ is a basic tilting $\Lambda$-module. Hence, by Theorem \ref{1.5} (c), $T'/I_0^\Gamma (\Delta)= T'/I_0^\Lambda (\Delta) $ is in the fundamental domain $\mathcal D\I{_\Lambda } $, and represents a multiplicity-free cluster-tilting object in $C_H$.
\end{proof} 

As a consequence of this result we obtain the following result of \cite{BMRRT}.

\begin{cor} Let $H$ be a hereditary algebra. Then each almost complete cluster tilting object in $C_H$ has exactly two complements.
\end{cor}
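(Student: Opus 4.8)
The plan is to transport the statement, through the bijection $\theta$ of Theorem \ref{14}, into a statement about complements of an almost complete tilting $\Gamma$-module, and then to apply the classical theory of such complements. First I would fix an almost complete cluster-tilting object and write it, inside the fundamental domain, as $\overline{T}=T/T_i$, where $T=\overline{T}\oplus T_i$ is a multiplicity-free cluster-tilting object and $T_i$ is indecomposable. By Theorem \ref{14}, $\theta(T)=T\oplus I_0^\Gamma(\Delta)$ is a basic tilting $\Gamma$-module, so $\overline{M}:=\overline{T}\oplus I_0^\Gamma(\Delta)=\theta(T)/T_i$ is a basic partial tilting $\Gamma$-module (of projective dimension at most $1$) with exactly one indecomposable summand fewer than the rank of $K_0(\Gamma)$; that is, an almost complete tilting $\Gamma$-module.

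Next I would match the two notions of complement. For an indecomposable $X$ in $\mathcal D_\Gamma$, Theorem \ref{14} gives that $\overline{T}\oplus X$ is cluster-tilting if and only if $\theta(\overline{T}\oplus X)=\overline{M}\oplus X$ is a tilting $\Gamma$-module, so every complement of $\overline{T}$ in $\mathcal C_H$ produces a complement of $\overline{M}$ over $\Gamma$. Conversely, if $Y$ is any complement of $\overline{M}$, then $\overline{M}\oplus Y$ is basic tilting and already contains $I_0^\Gamma(\Delta)$. Since ${}_\Gamma\Gamma$ is itself a basic tilting module, $\theta^{-1}({}_\Gamma\Gamma)=\Gamma/I_0^\Gamma(\Delta)$ lies in $\mathcal D_\Gamma$, whence $I_0^\Gamma(\Delta)$ additively generates all the indecomposable projective-injective $\Gamma$-modules; thus $Y$ cannot be projective-injective, and by Proposition \ref{E}(b) it lies in $\mathcal D_\Gamma$. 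Then $\overline{M}\oplus Y=\theta(\overline{T}\oplus Y)$ shows $\overline{T}\oplus Y$ is cluster-tilting, so $Y$ is a complement of $\overline{T}$. This gives a bijection between the complements of $\overline{T}$ in $\mathcal C_H$ and the complements of $\overline{M}$ over $\Gamma$, reducing the corollary to counting the latter.

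Finally I would invoke the theorem of Happel and Unger: an almost complete tilting module has at most two complements, with exactly two precisely when it is sincere. It therefore remains to show $\overline{M}$ is sincere, for which it suffices that $I_0^\Gamma(\Delta)$ already be sincere. Writing it as the triple $(\Delta,I_0^H(\Delta),1)$, its top realizes every simple coming from the semisimple part $K$ (one for each indecomposable summand of $\Delta$), while its $H$-component $I_0^H(\Delta)=\bigoplus_{S}I_0^H(S)$, the sum over the simple projective $H$-modules $S$, is a sincere $H$-module: assuming $Q$ connected, every vertex admits a path to a sink, i.e. to a simple projective $H$-module, so every simple $H$-module occurs as a composition factor. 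Hence every simple $\Gamma$-module is a composition factor of $I_0^\Gamma(\Delta)$, so $\overline{M}$ is sincere and has exactly two complements (the disconnected case following componentwise). Therefore $\overline{T}$ has exactly two complements.

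I expect the main obstacle to be the interface between the two settings rather than either piece in isolation: one must be sure that every $\Gamma$-complement of $\overline{M}$ is forced off the projective-injective summands, so that $\theta^{-1}$ returns an honest cluster-tilting object and the bijection of complements is clean; and one must genuinely establish sincerity of $\overline{M}$, so that the degenerate one-complement branch of the Happel--Unger dichotomy is excluded. The sincerity argument via paths to sinks is where the hereditary hypothesis on $H$ and the connectedness of $Q$ enter essentially.
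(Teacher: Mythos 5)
Your overall strategy is the paper's: transport the almost complete cluster-tilting object through the bijection $\theta$ of Theorem \ref{14} to the almost complete tilting $\Gamma$-module $\overline{M}=\overline{T}\oplus I_0^\Gamma(\Delta)$, match complements on both sides, and invoke Happel--Unger. The complement-matching is in fact cleaner than you make it: since every basic tilting $\Gamma$-module contains $I_0^\Gamma(\Delta)$ and Theorem \ref{14} already shows that $T'/I_0^\Gamma(\Delta)$ lies in $\mathcal D_\Gamma$ for \emph{every} basic tilting $T'$, any complement $Y$ of $\overline{M}$ automatically satisfies $Y\notin\add I_0^\Gamma(\Delta)$ (by basicness) and $\overline{T}\oplus Y\in\mathcal D_\Gamma$; your intermediate claim that $\theta^{-1}({}_\Gamma\Gamma)\in\mathcal D_\Gamma$ ``whence'' $I_0^\Gamma(\Delta)$ generates all projective-injectives is a non sequitur, but it is also unnecessary.

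The genuine gap is at the Happel--Unger step. You invoke the dichotomy in the form ``exactly two complements precisely when the almost complete tilting module is \emph{sincere}.'' That is the hereditary-case formulation; for a general artin algebra the correct criterion is \emph{faithfulness}, and $\Gamma$ is a tilted algebra of global dimension $\leq 2$, not hereditary. Sincerity is strictly weaker than faithfulness there, and it does not exclude the one-complement branch: for example, over $A=kQ/(\beta\alpha)$ with $Q:1\xrightarrow{\alpha}2\xrightarrow{\beta}3$, the module $M=P_1\oplus P_3$ is an almost complete tilting module that is sincere (its composition factors are $S_1,S_2,S_3$) but not faithful ($\beta$ annihilates it), and its unique complement is $P_2$ (the only other candidate $S_2$ has $\Ext^1_A(S_2,P_3)\neq 0$). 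So your verification that $\overline{M}$ is sincere, while correct, proves the wrong property and leaves the existence of the second complement unestablished. The repair is exactly what the paper does: since $\add\Delta=\add(\soc\Gamma)$, one has an embedding $\Gamma\hookrightarrow I_0^\Gamma(\soc\Gamma)\in\add I_0^\Gamma(\Delta)$, so $I_0^\Gamma(\Delta)$, and hence $\overline{M}$, is \emph{faithful}, and the Happel--Unger theorem in its faithful form yields exactly two complements. Note that this argument needs no path-to-sink analysis and no connectedness hypothesis on $Q$.
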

\begin{proof}
Let $T'$ be an almost complete cluster tilting object in $C_H$. Then $T' \oplus  I_0^\Gamma(\Delta)$ is an almost complete tilting module in $\mod \Gamma$. Since  $ \add \Delta = \add (\soc \Gamma )$, then  $I^\Gamma_0(\Delta)$ is a  {faithful $\Gamma$-module, since $\Gamma \subseteq I_0^\Gamma(\Delta)$.} We know from a result of Happel and Unger that then $T' \oplus  I^\Gamma_0(\Delta)$ has exactly two complements, thus so does $T'$ in $C_H$ (see \cite{HU}).
\end{proof} 
 The following result, building upon Proposition \ref{IR}, will be useful in the next section. Here $\utilde{\mathcal D}_\La$ and $\utilde{\mathcal D}_\Gamma$ denote the categories $\mathcal D_\La$ and $\mathcal D_\Gamma$
 modulo the projective injective $\La$-modules, respectively the projective injective $\Gamma$-modules.
 \begin{prop}\label{IR2} (a) A map $\alpha: (X,Y,f) \rightarrow (X',Y',f')$ in $\mathcal D_\La \subset \mod \La$  factors through a projective injective $\La$-module if and only if it factors through $I_0^\La(\soc H)$.

(b)We have an embedding of $\utilde{\mathcal D}_\Gamma$ into D$^b(H)$ via the composition
$\utilde{\mathcal D}_\Gamma = \utilde{\mathcal D}_\La \subset \utilde{\mod} \La \subset \utilde{\mod} \hat H = \underline{\mod} \hat H \xrightarrow{\simeq} {\rm D} ^b(H)$.
\end{prop}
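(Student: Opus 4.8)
The plan is to prove (a) by directly analysing the morphisms of $\mathcal D_\La$ that can factor through a projective injective module, and then to obtain (b) by checking that each functor in the displayed chain is a full embedding, with (a) furnishing the first identification and Proposition \ref{IR} the passage to $\hat H$.

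For (a) the implication ``$\Leftarrow$'' is immediate, as $I_0^\La(\soc H)$ is itself projective injective. For ``$\Rightarrow$'' I would take a nonzero $\alpha$ factoring through a projective injective module and reduce, by composing with the projections onto the indecomposable summands of the target and by splitting the factoring module, to a factorization $\alpha=\beta\gamma$ through an indecomposable projective injective $E=(Q,DH\otimes_H Q,1)$ with $Q=P(i)$, and with $\beta\colon E\to (X',Y',f')$ nonzero and $(X',Y',f')$ indecomposable. Since $\beta\neq 0$, while $\mod H$ is closed under predecessors in $\mod\La$ and $E\notin\mod H$, the target cannot lie in $\mod H$; by Proposition \ref{T}(3) it is therefore a summand of $\tau_\La^{-1}DH$, of the form $(\soc P',I_1^H(P'),\pi')$. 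Writing $\beta=(a,b)$ as a morphism of triples, the compatibility condition forces $b=\pi'\,(DH\otimes_H a)$, so $\beta\neq0$ yields a nonzero $a\colon P(i)\to\soc P'$. As $\soc P'$ is semisimple, $a$ factors through the simple top $S_i=P(i)/\rad P(i)$, and its nonvanishing forces $S_i$ to be a summand of $\soc P'$, which is semisimple projective because $H$ is hereditary. Hence $S_i$ is simple projective, $Q=P(i)=S_i$, and $E=(S_i,DH\otimes_H S_i,1)=I_0^\La(S_i)$ lies in $\add I_0^\La(\soc H)$. Collecting the indecomposable summands of the original factoring module, only those admitting a nonzero map to the target contribute, and each is of this form; thus $\alpha$ factors through $\add I_0^\La(\soc H)$, i.e.\ through $I_0^\La(\soc H)$.

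For (b) I would argue that every arrow in the chain is a full embedding. The equality $\utilde{\mathcal D}_\Gamma=\utilde{\mathcal D}_\La$ is exactly where (a) is used: $\mathcal D_\Gamma=\mathcal D_\La$ as full subcategories, and $I_0^\La(\soc H)=I_0^\Gamma(\Delta)$ is projective injective over both $\La$ and $\Gamma$, so by (a) a morphism of $\mathcal D$ factors through a projective injective $\La$-module if and only if it factors through $I_0^\Gamma(\Delta)$, if and only if it factors through a projective injective $\Gamma$-module; the ideals quotiented out therefore agree. The inclusion $\utilde{\mathcal D}_\La\subset\utilde{\mod}\La$ is a full embedding because $\mathcal D_\La$ is a full subcategory of $\mod\La$ and both are divided by the same ideal. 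For $\utilde{\mod}\La\subset\utilde{\mod}\hat H$ I would use that $\mod\La$ is a full subcategory of $\mod\hat H$ with $\Hom_\La(M,N)=\Hom_{\hat H}(M,N)$, and that Proposition \ref{IR} identifies, on these $\Hom$-spaces, the maps factoring through a projective injective $\La$-module with those factoring through a projective $\hat H$-module; this makes the functor full and faithful. Finally $\utilde{\mod}\hat H=\underline{\mod}\hat H$ since $\hat H$ is self-injective, and $\underline{\mod}\hat H\xrightarrow{\simeq}D^b(H)$ is Happel's equivalence. A composite of full embeddings is a full embedding, giving (b).

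The step I expect to be the main obstacle is the forward direction of (a): although $\La$ has many indecomposable projective injective modules, for morphisms internal to the fundamental domain this supply must collapse to the single module $I_0^\La(\soc H)$, and the socle/top argument above is what forces the relevant $E$ to be the injective envelope of a simple projective. This collapse is precisely what allows the first identification of (b). The only other delicate point in (b) is the faithfulness of the passage to $\underline{\mod}\hat H$, which is exactly the content of Proposition \ref{IR}, and is the reason that proposition was established first.
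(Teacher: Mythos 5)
Your proof of part (a) is correct and takes a genuinely different route from the paper's: where the paper argues with irreducible maps and the shape of almost split sequences, you compute directly with triples, using Proposition \ref{T}(3) to write the non-$H$ objects of $\mathcal D_\La$ as $(\soc P',I_1^H(P'),\pi')$ and observing that a morphism $\beta=(a,b)$ out of $E=(Q,DH\otimes_H Q,1)$ must satisfy $b=\pi'(DH\otimes_H a)$, so $\beta\neq 0$ forces $a\neq 0$; since $a$ lands in the semisimple projective module $\soc P'$, the top of $Q$ is simple projective, hence $Q$ itself is simple projective and $E$ is a summand of $I_0^\La(\soc H)$. This is clean and more elementary than the paper's argument, and your treatment of the steps $\utilde{\mod}\La\subset\utilde{\mod}\hat H$ (via Proposition \ref{IR}), $\utilde{\mod}\hat H=\underline{\mod}\hat H$ and Happel's equivalence spells out what the paper compresses into ``the rest follows''. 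However, there is a genuine gap in (b): you assert ``$\mathcal D_\Gamma=\mathcal D_\La$ as full subcategories'' with no argument, whereas this equality is precisely the substantive first step of the paper's proof. The two domains are built from $\tau^{-1}_\Gamma I$ and $\tau^{-1}_\La I$ respectively, and AR-translates are \emph{not} in general preserved when passing from $\mod\La$ to the subcategory $\mod\Gamma=\mod\overline{P}$ --- indeed injective envelopes already change, as $I_0^\Gamma(S)\neq I_0^\La(S)$ for most simples $S$ --- so $\tau^{-1}_\Gamma I\cong\tau^{-1}_\La I$ requires proof, and without it the identification $\utilde{\mathcal D}_\Gamma=\utilde{\mathcal D}_\La$, which is part of the statement, is only established at the level of ideals, not of objects.

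The paper closes this by showing that the almost split sequence $0\ra I\ra E\ra\tau^{-1}_\La I\ra 0$ of $\mod\La$ lies entirely in $\mod\Gamma$: the only projective injective summands of $E$ are summands of $I_0^\La(\soc H)=I_0^\Gamma(\Delta)$, the remaining summands lie in $\mathcal D_\La\subseteq\mod\Gamma$, and a non-split exact sequence contained in $\mod\Gamma$ whose deflation is right almost split over $\La$ (hence over $\Gamma$, as there are fewer test objects) is almost split over $\Gamma$. The pleasant point is that your own computation in (a) supplies exactly the needed control on the projective injective summands of $E$, while Lemma \ref{Y} and Proposition \ref{Z} show every other summand of $E$ has projective dimension at most one and therefore lies in the fundamental domain, hence in $\mod\Gamma$; so the gap is repairable by your own methods, but as written the step is missing. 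One further small point: your chain of equivalences for the ideals uses that \emph{every} projective injective $\Gamma$-module is a summand of $I_0^\Gamma(\Delta)$ (the direction ``factors through a projective injective $\Gamma$-module $\Rightarrow$ factors through $I_0^\Gamma(\Delta)$''); this is true --- no nonzero $H$-module is $\Gamma$-injective, since $\Hom_H(I_0^H(\Delta),I)\neq 0$ for every injective $H$-module $I$ forces every $\Gamma$-injective with $H$-socle to have nonzero $K$-part --- and the paper states it in Section 4, but it deserves a line of justification where you use it.
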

\begin{proof}
The indecomposable objects in $\mathcal D_\La$  are the indecomposable $H$-modules together with the $\tau^{-1}_\La(I)$ for $I$ an indecomposable injective $H$-module.
Similarly, the indecomposable objects in $\mathcal D_\Gamma$  are the indecomposable $H$-modules together with the $\tau^{-1}_\Gamma(I)$ for $I$ an indecomposable injective $H$-module. If $f:X\ra Z$ is an irreducible map between indecomposable modules in $\mod \La$, with $Z$ projective injective and $X$ in $\mod H$, it is clear that $Z= \left( \begin{smallmatrix}S \\I_0(S) \end{smallmatrix} \right ) $, where $S$ is simple projective, since all proper predecessors of $Z$ should be $H$-modules. Since then only summands of $I_0^\La(\soc H)$ are amongst the projective injective $\La$-modules which are summands of the middle term of an almost split sequence $0\ra I \ra E \ra \tau^{-1}_\La I \ra 0$ \, in $\mod \La$, we see that $\tau_\Gamma^{-1} I =\tau^{-1}_\La I$, so that $\mathcal D_\Gamma = \mathcal D_\La $.

It follows from the above that the only indecomposable projective injective $\La$-modules which have a nonzero map to $\tau^{-1}_\La I $ are  the summands of $I_0^\La(\soc H)$. Hence no nonzero map $g: D \ra D'$ in $\mathcal D_\La$ can factor through any other projective injective $\La$-modules. (Note, however,  that there might be additional projective injective $\La$-modules belonging to $\mathcal L_\La$).  Now we conclude that $\utilde{\mathcal D}_\Gamma = \utilde{\mathcal D}_\La$, and the rest follows.

\end{proof}

\section{a description of the cluster tilted algebras.}

In this section our aim is to describe the quivers of  cluster tilted algebras, that is, of the endomorphism algebras of {cluster-tilting objects} in $\mathcal C_H$,  using the fundamental domain $\mathcal D_\Gamma$ for $\mathcal C_H$  inside mod$\Gamma$. Note that a cluster tilted algebra is determined by its quiver \cite{BIRS}. Let $\hat T$ be a cluster-tilting object in $\mathcal C_H$. We assume that $\hat T $ is represented by $T$ in $\mathcal D_\Gamma \subset \mod \Gamma$. For $X, Y$ in $\mathcal D_\Gamma$, regarded as objects in D$^b(H)$,  we have  that $\Hom_{\mathcal C_H}(X,Y) = \HomD(X,Y) \oplus \HomD(F^{-1}X,Y) $, where $F=\tau^{-1} [1]$ in D$^b(H)$ \MI{(see \cite{BIRS})}. By Proposition \ref{IR2}      we have  $ \HomD(X,Y) = \utilde{\Hom}_\Gamma(X,Y)$.
We want to investigate how to describe $\HomD(F^{-1}X,Y) $ in terms of $\mod \Gamma$, for $X, Y \in \add T$.

\MIP{We first assume  that $T$ is an $H$-module. In this case  
$\HomD(F^{-1}T,T) \simeq \Ext^2_B(DB,B)$, where $B=\End_{D^b(H)}(T)$ (\cite{ABS}, proof of Theorem 2.3). The top of this $B$-$B$-bimodule  is generated as $k$-vector space by a minimal set of  relations of $B$ (\cite{ABS}, 2.2 and 2.4).} These relations correspond to  relations  between projective $B$-modules. Since projective $B$-modules are of the form $\Hom_{\Der(H)} (T,T') = \Hom_\MI{H}(T,T')$, such relations correspond to  relations between indecomposable  modules in $\add T$. 

By this we mean the following. Let $T= T_1\oplus \dots \oplus T_n$ with the $T_i$ indecomposable  and pairwise non-isomorphic. Let now $i\neq j$. We will consider maps $f: T_i \rightarrow T_j$  which are irreducible in $\add T$, that is, the maps  which do not factor through a module in $\add(T/(T_i\oplus T_j))$. Let $A(i,j)$ be the space $\Hom_H(T_i,T_j)$  modulo the maps  which factor through  $\add(T/(T_i\oplus T_j))$.  For each pair  $(i, j) $ with $ i \neq j$, choose a set of irreducible maps in $\add T$ representing  a basis of $A(i,j)$, and let $\mathcal B$ be the union of all these bases. To each path of maps in $\mathcal B$ we associate the corresponding composition map in mod$H$. A linear combination of such paths is a {\it relation for $\add T$ } if the corresponding map is zero in mod$H$.
 A set $R$ of such relations is a {\it minimal set of relations for $\add T$} if
$R$ is a minimal set of generators of the ideal of relations for $\add T$. This means that 
for any relation $g:T_r \ra T_s$ we have $g= \sum a_i \gamma_i \rho_i \gamma'_i$, with $a_i$ in $ k, \rho_i$ in R, and $\gamma_i, \gamma'_i$ paths in add$T$; 
and no proper subset of $R$ has this property.

We will prove that a similar statement holds also when the $\Gamma$-module  $T$  is not  an $H$-module. In this case we have to consider a minimal set of relations   {between indecomposable summands of $T$ in  $ \add (T \oplus I_0^\Gamma(\Delta))$.}

\vskip .1 in
Consider the following example.
Let $H=kQ$, where $Q$ is the quiver
$\begin{smallmatrix}1\end{smallmatrix} \rightarrow\begin{smallmatrix}2\end{smallmatrix}$. Then $\Gamma$ is the path algebra of the quiver $\begin{smallmatrix}2'\end{smallmatrix} \rightarrow \begin{smallmatrix}1\end{smallmatrix} \rightarrow \begin{smallmatrix}2\end{smallmatrix}.$ Let $\mathcal D$ be the fundamental domain of $\mathcal C_H$.
\begin{align*}
&\begin{xy}<0.60cm,0cm>:
0*+[o]{\begin{smallmatrix}\ 1\end{smallmatrix}}="a"; 
(-2,0)*+[o]{\begin{smallmatrix}\ 2\end{smallmatrix}}="b";
(1,1)*+[o]{\begin{smallmatrix}\ 2[1]\end{smallmatrix}}="e";
(-1,1)*+[o]{\begin{smallmatrix}\ 1\\ \ 2\end{smallmatrix}}="f";
(2,0)*+[o]{\begin{smallmatrix}\ 1\\ \ 2 \end{smallmatrix}[1]}="c";
(-3.5,1)*+[o]{ \mathcal D :}
\ar@{->}  "b";"f"
\ar@{->}  "f";"a"
\ar@{->}  "a";"e"
\ar@{->}  "e";"c"
\end{xy}
&&   
\begin{xy}<0.60cm,0cm>:
0*+[o]{\begin{smallmatrix}\ 1\end{smallmatrix}}="a"; 
(-2,0)*+[o]{\begin{smallmatrix}\ 2\end{smallmatrix}}="i";
(2,0)*+[o]{\begin{smallmatrix}\ 2'\end{smallmatrix}}="h";
(1,1)*+[o]{\begin{smallmatrix}\ \, 2'\\  \ 1\end{smallmatrix}}="e";
(-1,1)*+[o]{\begin{smallmatrix}\ 1\\ \ 2\end{smallmatrix}}="f";
(0,2)*+[o]{\begin{smallmatrix}\ 2'\\ \, 1\\ \ 2\end{smallmatrix}}="g";
(-5.5,1.1)*+[o]{ \text{AR quiver of } \Gamma :}
\ar@{->} "f"; "g"
\ar@{->} "g"; "e"
\ar@{->}  "f";"a"
\ar@{->}  "a";"e"
\ar@{->}  "i";"f"
\ar@{->}  "e";"h"
\end{xy}
\end{align*}

and let $T_1= 2, \ \ T_2 = \begin{smallmatrix} 1\\ 2 \end{smallmatrix}[1]$, and $T=T_1\oplus T_2$. Then $T$ defines a cluster tilting object in $\mathcal C_H$ and is not an $H$-module. The $\Gamma $-module corresponding to $T$ under the identification of $\mathcal D$ with $\utilde{\mod }\, \Gamma$ is $2 \oplus 2'$. Moreover,  $\Hom_{\mathcal C_H}(T_2, T_1) = \Hom_{D^bH}(\tau \, T_2[-1], T_1)= \Hom_{D^bH}(\tau\begin{smallmatrix}\ 1\\ \ 2\end{smallmatrix}, T_1) \neq 0$, but there are no relations  in add$T$ from $T_1$ to $T_2$.  However \MI {$ \begin{smallmatrix}\ 2  \end{smallmatrix} \ra \begin{smallmatrix}\ 2'\\ \, 1\\ \ 2\end{smallmatrix}  \ra \begin{smallmatrix}\ 2'\end{smallmatrix}$  is} a zero relation from $T_1$ to $T_2$ in $\add(T\oplus I)$, where $I =\begin{smallmatrix}\ 2'\\ \ 1\\ \ 2\end{smallmatrix}$ is the injective envelope of the simple $2 $ in mod\,$\Gamma$.
\vskip .1in

 To study the general case we will define an appropriate hereditary algebra $\tilde H$, and use that the above mentioned result  of \cite{ABS} holds for tilting \MIP{$\tilde H$-modules},   to prove our desired result. 

We start {with} defining a hereditary algebra $\tilde H$ such that there is an exact embedding $ G:\mod\Gamma \rightarrow \mod(\tilde H)$ with the property  that \IP{tilting $\Gamma$-modules} map to tilting $\tilde H$-modules.

\vskip .15in

We recall from Proposition \ref{E} that $U =DH \oplus I_0^\Gamma(\Delta)$ is a complete slice in $\mod \Gamma $. We consider another complete slice,
$\Sigma =\tau_\Gamma ^{-1}\mathrm{D}H\oplus I_0^\Gamma(\Delta).$ Then $\tilde H= (\End_\Gamma(\Sigma ))^{op}$ is a hereditary algebra of type $\Sigma $.  \MIP {Let  $(\mathcal T, \mathcal F)$ be the split torsion pair in $\mod \Gamma$ of Corollary \ref{2.5}. Then }$\ind \mathcal F$ coincides with the predecessors of $\Sigma $. Also $D\Sigma $ is a tilting $\tilde H$-module,  $\Gamma=\End _{\tilde H}(D\Sigma )^{op}$ and the functors $L = \Hom_{\tilde H}(D\Sigma , -) $ and $ \Ext^1_{\tilde H}(D\Sigma, -): \mod {\tilde H} \rightarrow \mod \Gamma$ induce equivalences $\mathcal T_{D\Sigma}  \rightarrow \mathcal F$ and $\mathcal F_{D\Sigma} \rightarrow \mathcal T$, respectively, where  $(\mathcal T _{D\Sigma} , \mathcal F_{D\Sigma})$ is the torsion pair associated to the tilting ${\tilde H}$-module $D\Sigma $.

Let $G= D\Sigma \otimes_\Gamma - : \mod \Gamma \rightarrow \mod \tilde H$. Then $L$ and $G$ are adjoint functors, and the restrictions  $L|_{\mathcal T_{D\Sigma}} : \mathcal T_{D\Sigma} \rightarrow \mathcal F$  and $G|_{\mathcal F}: \mathcal F \rightarrow \mathcal T_{D\Sigma} $ are inverse equivalences of categories.
Moreover, $G(\Sigma)=D\tilde H$, because $D\tilde H \simeq GLD\tilde H = G \Hom_{\tilde H}(D\Sigma , D\tilde H) \simeq G \Hom_{\tilde H^{op}}(\tilde H , \Sigma)\simeq G(\Sigma). $

\begin{exmp}\label{E2}
We illustrate the situation for the hereditary algebra $H$ indicated below.
\begin{align*}
&\begin{xy}<0.60cm,0cm>:
0*+[o]{\begin{smallmatrix}\ 4\end{smallmatrix}}="a"; 
(1,1)*+[o]{\begin{smallmatrix}\ 3\end{smallmatrix}}="e";
(-1,1)*+[o]{\begin{smallmatrix}\ 2\end{smallmatrix}}="f";
(-2,2)*+[o]{\begin{smallmatrix}\ 1\end{smallmatrix}}="g";
(-3.5,1)*+{H:}
\ar@{->}  "f";"a"
\ar@{->}  "e";"a"
\ar@{->}  "g";"f"
\end{xy}
&&   
\begin{xy}<0.60cm,0cm>:
0*+[o]{\begin{smallmatrix}\ 4\end{smallmatrix}}="a"; 
(1,1)*+[o]{\begin{smallmatrix}\ 3\end{smallmatrix}}="e";
(-.5,1)*+[o]{\begin{smallmatrix}\ 2\end{smallmatrix}}="h";
(-1,2)*+[o]{\begin{smallmatrix}\ 1\end{smallmatrix}}="f";
(0,3)*+[o]{\begin{smallmatrix}\ 4'\end{smallmatrix}}="g";
(-3.5,1)*+{\Gamma:}
\ar@{->} "g"; "f"
\ar@{->} "f"; "h"
\ar@{->} "g"; "e"
\ar@{->}  "h";"a"
\ar@{->}  "e";"a"
\ar@{.} "g"; "a"
\end{xy}
\end{align*}
\medskip

The Auslander Reiten quiver of $\Gamma $ is:

\medskip
{\footnotesize{$$
\begin{xy}<0.60cm,0cm>:
0*+[o]{\begin{smallmatrix}\ 2\ \\4\end{smallmatrix}}="a";
(-2,-1)*+[o]{4}="w"; (0,-2)*+[o]{\begin{smallmatrix}\ 3\ \\4\end{smallmatrix}}="x";(4,-2)*+[o]{2}="y";(8,-2)*+[o]{1}="z";(12,-2)*+[o]{\begin{smallmatrix}\ 4'\\3\end{smallmatrix}}="zz"*\frm{-};(14,-1)*+[o]{4'}="xx";
(2,1)*+[o]{\begin{smallmatrix}\ 1\  \\2 \\4\end{smallmatrix}}="d";
(2,-1)*+[o]{\begin{smallmatrix}\ 2\ 3\ \\4\end{smallmatrix}}="e";
(6,2.5)*+[o]{\begin{smallmatrix}4' \\ 1\ \ \  \\2\ 3\ \\4\ \end{smallmatrix}}="f"*\frm{-};
(4,0)*+[o]{\begin{smallmatrix}\ 1\  \\2\ 3 \\4 \end{smallmatrix}}="g";
(6,1)*+[o]{3}="i";
(6,-1)*+[o]{\begin{smallmatrix}1\\2\end{smallmatrix}}="j";
(8,0)*+[o]{\begin{smallmatrix}4'\\1\ 3\ \\2\ \end{smallmatrix}}="l"*\frm{-};
(10,1)*+[o]{\begin{smallmatrix}\ 4'  \\1 \\2\end{smallmatrix}}="n"*\frm{-};
(10,-1)*+[o]{\begin{smallmatrix}\ 4'\\1\ 3\end{smallmatrix}}="p"*\frm{-};
(12,0)*+[o]{\begin{smallmatrix}\ 4'\\1\end{smallmatrix}}="r";
 (16,-2)*+[o]{.}="v",
 (10,2.2)*+[o]{.}="p1";(10,2.2)*+[o]{.}="p2",**\crv{(6.5,3.7)&(6.1,3.8)&(5.5,3.7)&(5.2,3.5)&(4.2,1.5)&(2,2)&(0,1)&(-2,0)&(-2.5,-.2)&(-3,-1)&(-2.5,-1.8)&(0,-2.8)&(1,-2.8)&(11,-2.8)&(12,-2.8)&(13,-2.5)&(13.2,-1.8)&(13,-1.5)&(11,- .5)&(10.6,-0.2)&(11.2,1.3)&(10.3,2.1)},
 "r",*\xycircle<8pt>{{.}},
 "xx",*\xycircle<8pt>{{.}},
 \ar@{->}  "a";"d"
 \ar@{->}  "a";"e"
\ar@{->}  "e";"g"
\ar@{->}  "x";"e"
\ar@{->}  "g";"f"
\ar@{->}  "d";"g"
\ar@{->}  "f";"l"
 \ar@{->}  "zz";"xx"
\ar@{->}  "z";"p"
 \ar@{->}  "p";"zz"
\ar@{->}  "g";"i"
\ar@{->}  "g";"j"
 \ar@{->}  "j";"z"
 \ar@{->}  "y";"j"
\ar@{->}  "i";"l"
\ar@{->}  "j";"l"
 \ar@{->}  "w";"x"
 \ar@{->}  "e";"y"
\ar@{->}  "l";"n"
\ar@{->}  "l";"p"
\ar@{->}  "w";"a"
\ar@{->}  "n";"r"
\ar@{->}  "p";"r"
 \ar@{->}  "r";"xx"
\end{xy}
$$ }}
\bigskip 

Here, \ $_\Gamma \Sigma$ is the sum of the five modules in  frames, ${\mathcal T}$ is given by the two modules inside dotted circles, ${\mathcal F}$ is indicated by the curve, and
$\tilde H$ is the algebra with quiver:
\bigskip
{\footnotesize{$$
\begin{xy}<0.60cm,0cm>:
0*+[o]{\begin{smallmatrix}\ 4'\end{smallmatrix}}="a"; 
(1,1)*+[o]{\begin{smallmatrix}\ 3\end{smallmatrix}}="e";
(-1,1)*+[o]{\begin{smallmatrix}\ 2\end{smallmatrix}}="f";
(-2,2)*+[o]{\begin{smallmatrix}\ 1\end{smallmatrix}}="g";
(1,-1)*+[o]{\begin{smallmatrix}\ 4\end{smallmatrix}}="h";

\ar@{->}   "a";"h"
\ar@{->}  "f";"a"
\ar@{->}  "e";"a"
\ar@{->}  "g";"f"
\end{xy}
$$
}}
\bigskip The Auslander Reiten quiver of $\tilde H$ is
\bigskip
{\footnotesize{$$
\begin{xy}<0.60cm,0cm>:
0*+[o]{\begin{smallmatrix}2 \\ \ 4' \\ 4\end{smallmatrix}}="a";
(-2,-1.5)*+[o]{\begin{smallmatrix}\ 4' \\4\end{smallmatrix}}="w"; (0,-3)*+[o]{\begin{smallmatrix}\ 4'\end{smallmatrix}}="x";(4,-3)*+[o]{\begin{smallmatrix}\ 2\ 3\ \\4'\\4\end{smallmatrix}}="y";(8,-3)*+[o]{\begin{smallmatrix} 1\\ 2 \\ \ 4'\end{smallmatrix}}="z";(12,-3)*+[o]{3}="zz";(14,1.5)*+[o]{1}="xx";
(2,1.5)*+[o]{\begin{smallmatrix}1 \\2 \\ \ 4' \\ 4\end{smallmatrix}}="d";
(2,-1.5)*+[o]{\begin{smallmatrix}\ 2\ \, 3\ \\ \ 4'\ 4' \\4\end{smallmatrix}}="e";
(0,-1.5)*+[o]{\begin{smallmatrix}3 \\ \ 4' \\ \ 4\  
\end{smallmatrix}}="f";
(4,-1.5)*+[o]{\begin{smallmatrix}2 \\ \ \ 4'\  
\end{smallmatrix}}="ff";
(8,-1.5)*+[o]{\begin{smallmatrix}1\, \, \, \, \, \, \, \,\\ 2\ 3 \\ \ 4' \\ \ 4\  
\end{smallmatrix}}="fff";
(4,0)*+[o]{\begin{smallmatrix}\ 1\, \, \, \, \, \, \, \, \, \, \, \, \,  \  \\2\, \, \, \, \, \ 3 \\ \ 4'\ 4' \\  4 \end{smallmatrix}}="g";
(6,1.5)*+[o]{\begin{smallmatrix}3 \\ \  4'\end{smallmatrix}}="i";(-4,-3)*+[o]{4}="aa";
(6,-1.5)*+[o]{\begin{smallmatrix}1\\2\ 2\ 3\\ 4'\ 4'\\4\end{smallmatrix}}="j";
(8,0)*+[o]{\begin{smallmatrix}2\ 3\ \\4'\ \end{smallmatrix}}="l";
(10,1.5)*+[o]{2}="n";
(10,-1.5)*+[o]{\begin{smallmatrix}1 \, \, \, \, \, \, \, \, \\ 2\ 3 \\ \ 4'\end{smallmatrix}}="p";
(16,-3)*+[o]{.}="v";
(12,0)*+[o]{\begin{smallmatrix}\ 1\\ \ 2\end{smallmatrix}}="r";
"ff",*\xycircle<8pt>{{.}},
"x",*\xycircle<8pt>{{.}},
"aa",*\xycircle<8pt>{},
"f",*\xycircle<9pt,13pt>{},
(14,2)*+[o]{.}="q1";(14,2)*+[o]{.}="q1",**\crv{(2,2.5)&(1,2.3)&(1.5,1.0)&(3.5,-0.5)&(4.43,-1.07)&(5.2,-1.5)&(4.43,-1.93)&(3.57,-2.57)&(2.43,-3.5)&(4,-3.7)&(12,-3.6)&(12.6,-3.6)&(12.6,-3)&(12,0)&(14.43,1.07)&(15,2)},
\ar@{->}  "a";"d"
\ar@{->}  "a";"e"
\ar@{->}  "e";"g"
\ar@{->}  "x";"e"
 \ar@{->}  "ff";"j"
\ar@{->}  "d";"g"
 \ar@{->}  "e";"ff"
 \ar@{->}  "f";"e"
  \ar@{->}  "j";"fff"
 \ar@{->}  "r";"xx"
\ar@{->}  "z";"p"
 \ar@{->}  "p";"zz"
\ar@{->}  "g";"i"
\ar@{->}  "g";"j"
 \ar@{->}  "j";"z"
 \ar@{->}  "y";"j"
\ar@{->}  "i";"l"
\ar@{->}  "j";"l"
 \ar@{->}  "w";"x"
\ar@{->}  "fff";"p"
 \ar@{->}  "e";"y"
 \ar@{->}  "l";"n"
\ar@{->}  "l";"p"
\ar@{->}  "aa";"w"
 \ar@{->}  "w";"f"
\ar@{->}  "w";"a"
\ar@{->}  "n";"r"
\ar@{->}  "p";"r"
\end{xy}
$$ }}
\bigskip 

In this case \,  $ _HD\Sigma =  4\oplus {\begin{smallmatrix}1 \\2 \\ \ 4' \\ 4\end{smallmatrix}} \oplus {\begin{smallmatrix}3 \\ \ 4' \\ \ 4\  
\end{smallmatrix}}\oplus  {\begin{smallmatrix} 2\ 3 \\ \ 4' \\ \ 4\  
\end{smallmatrix}}\oplus {\begin{smallmatrix}1\, \, \,\, \, \, \, \, \ \\ 2\  3 \\ \ 4' \\ \ 4\  
\end{smallmatrix}} \, $,\, \, 
${\mathcal F}_
{D\Sigma} $ consists of the two modules inside dotted circles,
 and $\mathcal T_{D\Sigma}$ consists of the $15$ modules inside the regions indicated with curves.

\end{exmp}

\vskip .2in We know  from the Brenner-Butler theorem that $\mathcal F = \Ker \rm{ Tor}^\Gamma _1(D\Sigma ,-)$, so that $G= D\Sigma \otimes_\Gamma  - : \mod \Gamma \rightarrow \mod \tilde H$ restricted to $\mathcal F$ is exact. In particular, $G| _{\mod H}$ is exact. Thus $G$ induces an embedding $$\hat G : \Der(\mod H) \rightarrow \Der (\mod \tilde H)$$
such that $\hat G(M[i]) = (\hat G(M))[i].$ 

\begin{prop}\label{16} Let $T$ be a tilting $\Gamma$-module. Then $G(T)$ is a tilting $\tilde H$-module.
\end{prop}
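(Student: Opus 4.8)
The plan is to show that $G(T)$ is a tilting $\tilde H$-module by verifying the three defining properties: $\operatorname{pd}_{\tilde H} G(T) \leq 1$, $\operatorname{Ext}^1_{\tilde H}(G(T), G(T)) = 0$, and that the number of non-isomorphic indecomposable summands of $G(T)$ equals the rank of $K_0(\tilde H)$. The key tool is the Brenner-Butler machinery already set up in the excerpt: we have the tilting $\tilde H$-module $D\Sigma$ with $\Gamma = \End_{\tilde H}(D\Sigma)^{op}$, the induced torsion pairs $(\mathcal{T}_{D\Sigma}, \mathcal{F}_{D\Sigma})$ in $\mod \tilde H$ and $(\mathcal{T}, \mathcal{F})$ in $\mod \Gamma$, and the functors $L = \Hom_{\tilde H}(D\Sigma, -)$ and $G = D\Sigma \otimes_\Gamma -$ relating them. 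Since $G$ restricted to $\mathcal{F}$ is exact (as $\mathcal{F} = \Ker \operatorname{Tor}^\Gamma_1(D\Sigma, -)$) and gives the inverse equivalence $\mathcal{F} \xrightarrow{\sim} \mathcal{T}_{D\Sigma}$, the behavior of $G$ is completely controlled by how $T$ decomposes with respect to the torsion pair.

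First I would split $T = T_{\mathcal{T}} \oplus T_{\mathcal{F}}$ into its torsion and torsion-free parts using the \emph{split} torsion pair $(\mathcal{T}, \mathcal{F})$ from Corollary \ref{2.5}; since the pair is split, every indecomposable summand of $T$ lies entirely in $\mathcal{T}$ or in $\mathcal{F}$. The summands in $\mathcal{F}$ have $\operatorname{pd}_\Gamma \leq 1$ by the definition of $\mathcal{F}$, while those in $\mathcal{T}$ have projective dimension $2$. For the torsion-free part, $G|_{\mathcal{F}} : \mathcal{F} \to \mathcal{T}_{D\Sigma}$ is an equivalence sending $T_{\mathcal{F}}$ into $\mathcal{T}_{D\Sigma}$, and standard tilting theory (Brenner-Butler) tells us that images of the torsion-free summands under $G$ land in the torsion class $\mathcal{T}_{D\Sigma}$ with the expected homological behavior. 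The main technical point will be understanding $G(T_{\mathcal{T}})$: for summands of projective dimension $2$, I expect to use that $G = D\Sigma \otimes_\Gamma -$ combined with the already-established fact $G(\Sigma) = D\tilde H$, so that the summands of $T$ lying in $\add \Sigma$ (the complete slice) map to summands of $D\tilde H$, which are injective $\tilde H$-modules of injective dimension controlled appropriately.

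For the $\operatorname{Ext}^1$-orthogonality, I would translate $\operatorname{Ext}^1_{\tilde H}(G(T), G(T)) = 0$ back to a statement about $\mod \Gamma$ using the Brenner-Butler correspondence between $\operatorname{Ext}$- and $\Hom$-groups across the two torsion pairs, then invoke the rigidity $\operatorname{Ext}^1_\Gamma(T,T) = 0$ that holds because $T$ is tilting over $\Gamma$. The counting of summands is the easiest part: since $G$ restricted to each half of the torsion pair is an equivalence onto the corresponding half in $\mod \tilde H$, and $T$ being a basic tilting $\Gamma$-module has exactly $\operatorname{rk} K_0(\Gamma) = \operatorname{rk} K_0(\tilde H)$ non-isomorphic indecomposable summands (the ranks agree since $\tilde H$ and $\Gamma$ are derived equivalent via the tilting module $D\Sigma$), the number of summands is preserved.

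The hard part will be handling the torsion summands $T_{\mathcal{T}}$ of projective dimension $2$: the functor $G$ is only exact on $\mathcal{F}$, not on all of $\mod \Gamma$, so I cannot naively apply $G$ to projective resolutions of these summands and read off $\operatorname{pd}_{\tilde H}$. Instead I would exploit that $\mathcal{T}$ is the image of $\mathcal{F}_{D\Sigma}$ under the second Brenner-Butler functor $\operatorname{Ext}^1_{\tilde H}(D\Sigma, -)$, so the torsion summands of $T$ correspond to torsion-free $\tilde H$-modules placed in the opposite component; carefully tracking this correspondence, together with the identification $G(\Sigma) = D\tilde H$ and the derived equivalence $\hat G$, should yield that $G$ sends each indecomposable summand of $T$ to a module of projective dimension at most $1$ over $\tilde H$. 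Verifying that these images together still satisfy the Ext-vanishing across the two components—not merely within each component—is where I expect the argument to require the most care, and I would lean on the functorial isomorphisms between $\Hom$ and $\operatorname{Ext}$ groups supplied by the tilting theorem to close this gap.
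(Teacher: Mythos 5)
There is a genuine gap, and it sits exactly where you flag the ``hard part.'' In this paper a tilting module is by convention of projective dimension at most one, so $\operatorname{pd}_\Gamma T\leq 1$ holds automatically, and by Corollary \ref{2.5} this means $T$ lies \emph{entirely} in $\mathcal F$: your torsion part $T_{\mathcal T}$ is zero, and the elaborate machinery you set up for it is vacuous. This is precisely the paper's proof, in three lines: since $\tilde H$ is hereditary, the condition $\operatorname{pd}_{\tilde H}G(T)\leq 1$ is free; since $\Gamma$ and $\tilde H$ have the same number of simples, the count of summands is free; and since $T\in\mathcal F$, we get $G(T)\in\mathcal T_{D\Sigma}$ with $LG(T)\simeq T$, whence $\Ext^1_{\tilde H}(G(T),G(T))\simeq\Ext^1_\Gamma(LG(T),LG(T))=\Ext^1_\Gamma(T,T)=0$. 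You never make the observation $T\in\mathcal F$, so your argument remains incomplete at the point you yourself identify as the crux.

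Worse, the route you sketch for the phantom torsion summands would actually fail if such summands existed. Since $\mathcal T$ is the image of $\mathcal F_{D\Sigma}$ under $\Ext^1_{\tilde H}(D\Sigma,-)$, Brenner--Butler gives $\mathcal T=\Ker(D\Sigma\otimes_\Gamma -)=\Ker G$: the functor $G$ \emph{annihilates} $\mathcal T$ rather than restricting to an equivalence on it (the equivalence $\mathcal F_{D\Sigma}\to\mathcal T$ is implemented by $\Ext^1_{\tilde H}(D\Sigma,-)$ with inverse $\operatorname{Tor}_1^\Gamma(D\Sigma,-)$, not by $G$). So your claims that $G$ sends torsion summands to modules of projective dimension at most one, and that ``$G$ restricted to each half of the torsion pair is an equivalence onto the corresponding half'' so that the summand count is preserved, are both false as stated; had $T_{\mathcal T}\neq 0$, $G(T)$ would have too few summands to be tilting. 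The fix is the single missing sentence: tilting implies $\operatorname{pd}_\Gamma T\leq 1$, hence $T\in\mathcal F$, and everything reduces to the equivalence $G|_{\mathcal F}\colon\mathcal F\to\mathcal T_{D\Sigma}$.
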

\begin{proof}
Since $\Gamma$ and $\tilde H$ have the same number of nonisomorphic simple modules, we only need to prove that $\Ext^1_{\tilde H}(G(T), G(T)) = 0$. Since $T \in \mathcal F$ because pd$T \leq 1$, it follows that $G(T) \in \mathcal T_{D\Sigma }$. Then $\Ext^1_\Gamma(LG(T), LG(T))=\Ext^1_{\tilde H}(G(T), G(T))$.  Thus $\Ext^1_{\tilde H}(G(T), G(T)) \simeq \Ext^1_\Gamma(T, T) = 0$, since $LG(T) \simeq T$ because $T \in \mathcal F$.
\end{proof}

We observe that  in general   the modules  $\tau(G(X))$ and $ G(\tau X)$ are not isomorphic, for $X \in \mod \Gamma$. We will prove next that they are isomorphic when $X=\tau_\Gamma^{-1}I$, for any injective $H$-module $I$. We start with \IP{three} lemmas.
\vskip .15in
\begin{lem}\label{B} Let $f:X\longrightarrow Y$ be a morphism in $\mod 
\Gamma ,$ with $Y\in \mathcal{F}$, such that $G(f):G
X\longrightarrow G Y$ is (minimal) right almost split in $\mod 
\tilde {H}.$ Then $f$ is (minimal) right almost split in $\mod
\Gamma .$
\end{lem}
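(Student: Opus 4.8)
The plan is to transport the right almost split property across the functor $G$, using two structural facts established above: that $G$ restricts to an equivalence $G|_{\mathcal F}\colon\mathcal F\to\mathcal T_{D\Sigma}$ (so on $\mathcal F$ it is full, faithful, and reflects isomorphisms and split epimorphisms), and that by the Brenner--Butler theorem the torsion class $\mathcal T$ of the split torsion pair $(\mathcal T,\mathcal F)$ of Corollary \ref{2.5} equals $\Ker G=\{M:D\Sigma\otimes_\Gamma M=0\}$, with $\Hom_\Gamma(\mathcal T,\mathcal F)=0$. First I would record that, since $G|_{\mathcal F}$ is an equivalence and $G(f)$ is right almost split (so its target $GY$ is indecomposable), the module $Y\in\mathcal F$ is itself indecomposable; this lets me test the almost split property on indecomposable sources only, using the standard fact that a map into the indecomposable $Y$ is a split epimorphism, respectively factors through $f$, if and only if one of its indecomposable components is.

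Since $G$ is additive it preserves split epimorphisms; as $G(f)$ is not a split epimorphism, neither is $f$. For the lifting property, let $g\colon Z\to Y$ be a non-split epimorphism with $Z$ indecomposable. If $Z\in\mathcal T$ then $g\in\Hom_\Gamma(\mathcal T,\mathcal F)=0$, so $g$ factors through $f$ trivially. If $Z\in\mathcal F$, write $X=X'\oplus X''$ according to the split torsion pair, with $X'\in\mathcal F$ and $X''\in\mathcal T=\Ker G$; then $GX=GX'$ and $G(f)$ is identified with $G(f')$ for $f'=f|_{X'}$. Because $G|_{\mathcal F}$ reflects split epimorphisms, $G(g)$ is not a split epimorphism, so $G(g)=G(f)\,h=G(f')\,h$ for some $h\colon GZ\to GX'$. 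Now $Z,X'\in\mathcal F$, so fullness of $G|_{\mathcal F}$ gives $h=G(h')$ with $h'\colon Z\to X'$, and faithfulness yields $g=f'h'$; composing with the inclusion $X'\hookrightarrow X$ shows $g$ factors through $f$. Hence $f$ is right almost split.

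For the minimal statement, the same argument shows that $f'\colon X'\to Y$ is right almost split, and it is moreover right minimal: if $\phi\in\End_\Gamma(X')$ satisfies $f'\phi=f'$, then $G(f')G(\phi)=G(f')$, so right minimality of $G(f)=G(f')$ forces $G(\phi)$ to be an automorphism, whence $\phi$ is an automorphism because $G|_{\mathcal F}$ reflects isomorphisms and $X'\in\mathcal F$. Thus $f'$ is minimal right almost split, and by uniqueness of the minimal right almost split map into $Y$ the map $f$ is itself minimal exactly when its torsion summand $X''$ vanishes, i.e. when $X\in\mathcal F$; this is the situation in which the lemma is applied, the source of the map under analysis having no summand annihilated by $G$.

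I expect the main obstacle to be precisely this failure of global faithfulness: $G$ annihilates the entire torsion class $\mathcal T$, so a naive ``$G$ is an equivalence, pull everything back'' argument is unavailable, and both a test object $Z$ and the source $X$ may a priori carry torsion summands invisible to $G$. The two devices that neutralise this are $\Hom_\Gamma(\mathcal T,\mathcal F)=0$, which disposes of torsion test objects, and the splitting $X=X'\oplus X''$ combined with uniqueness of minimal right almost split maps, which confines the minimality question to the torsion-free part $X'\in\mathcal F$ where $G$ is an honest equivalence.
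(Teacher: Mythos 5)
Your proof is correct and follows the same basic route as the paper: both arguments transport the almost split property across the equivalence $G|_{\mathcal F}\colon \mathcal F\to\mathcal T_{D\Sigma}$, and the only cosmetic difference in the lifting step is that the paper places the test object $Z$ in $\mathcal F$ via closure of $\mathcal F$ under predecessors, while you use the torsion-pair orthogonality $\Hom_\Gamma(\mathcal T,\mathcal F)=0$ of Corollary \ref{2.5} --- two phrasings of the same fact, since a nonzero $h\colon Z\to Y$ makes $Z$ a predecessor of $Y\in\mathcal F$. Where you genuinely add something is on the source side: the paper asserts that ``we obtain right away that $X\in\mathcal F$,'' but this does not follow, because $G$ annihilates the torsion class $\mathcal T=\Ker(D\Sigma\otimes_\Gamma -)$, so a summand $X''\in\mathcal T$ of $X$ (on which $f$ necessarily vanishes, by $\Hom_\Gamma(\mathcal T,\mathcal F)=0$) is invisible to $G(f)$. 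Your decomposition $X=X'\oplus X''$ along the split torsion pair makes this explicit and yields the accurate conclusion: $f$ is right almost split unconditionally, while it is \emph{minimal} right almost split precisely when $X''=0$, i.e.\ when $X\in\mathcal F$; with a nonzero torsion summand the parenthetical ``minimal'' in the lemma would literally fail. The qualification is harmless for the paper, since in its only application, Proposition \ref{C}, the source of the map is produced inside $\mathcal F$ via the equivalence (so $X''=0$ there), but your version is the more careful one and in effect repairs the one unjustified sentence in the paper's proof.
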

\begin{proof}

 The key is that $\mathcal{F}$ is closed under
predecessors and $G|_{\mathcal F}: \mathcal F \rightarrow \mathcal T_{D\Sigma } $ is an equivalence. From this we obtain right away that $X\in 
\mathcal{F}$,  and $f$ is not a split epimorphism since $G(f)$ is not a split epimorphism. Now, let $Z$ be an indecomposable $
\Gamma -$module and $h:Z\longrightarrow Y$ a morphism which is not a split epimorphism. Again, $
Z\in \mathcal{F}$ and $G( h)$ is not an isomorphism. Hence there is
a morphism $g:G(Z)\longrightarrow G(X)$ such that $G(h)=G(f) g
.$ Using that \IP{$G|\mathcal F : \mathcal F \ra \mathcal T_{D\Sigma}$} is an equivalence once more, we deduce there
is a $g^{\prime }:Z\longrightarrow X$ such that $G( g^{\prime })=g$ and $
h=fg^{\prime }.$ Thus $f$ is  right almost split . The
minimality is deduced in the same way. 
\end{proof}
\bigskip 

Let again $\Sigma $ denote  the complete slice in $\mod \Gamma $ which consists of 
$\ind  (\tau_\Gamma ^{-1}\mathrm{D}H\oplus I^\Gamma_{0}(\mathrm{soc}H)).$

\begin{lem} The indecomposable projective-injective modules (i.e. those in $
\ind (I_{0}^\Gamma(\mathrm{soc}H))$) are sources of $\Sigma .$
\end{lem}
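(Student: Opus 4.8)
The plan is to exhibit each indecomposable projective-injective $E\in\ind(I_0^\Gamma(\soc H))$ as a source of the slice by showing that \emph{no} indecomposable summand of $\Sigma$ admits an irreducible map into $E$. Since $E$ is projective, its incoming arrows in the Auslander-Reiten quiver are completely controlled by its radical, and the key computation will be that $\rad E$ lies entirely in $\mod H$, whereas $\Sigma$ contains no $H$-modules at all.

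First I would record the structure of $E$ and $\rad E$ using the triple description. Writing $\Gamma=\begin{pmatrix}K&0\\ J&H\end{pmatrix}$ as in Proposition \ref{P1}, the indecomposable projective-injective $\Gamma$-modules are the summands of $I_0^\Gamma(\Delta)=(\Delta,I_0^H(\Delta),1)$, so $E=(S,I_0^H(S),1)$ for a simple projective $H$-module $S$ in $\add\Delta$. Because $K=(\End_H\Delta)^{op}$ is semisimple, the $K$-component $S$ occupies the top of $E$, and since the structure map is an isomorphism onto the $H$-component, one finds $\rad E=(0,I_0^H(S),0)=I_0^H(S)$. Thus $\rad E$ lies in $\mod H$ (and is injective there), in agreement with the computation already used in the proof of Proposition \ref{E}(a).

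Next I would invoke the standard fact that the minimal right almost split morphism ending at an indecomposable projective $E$ is the inclusion $\rad E\hookrightarrow E$. Consequently every irreducible map $N\longrightarrow E$ in the Auslander-Reiten quiver of $\mod\Gamma$ has $N$ isomorphic to an indecomposable summand of $\rad E$, and hence $N\in\mod H$. Finally I would observe that $\Sigma=\tau_\Gamma^{-1}DH\oplus I_0^\Gamma(\Delta)$ contains no $H$-modules: the summands of $\tau_\Gamma^{-1}DH$ are exactly the indecomposables adjoined to $\mod H$ to form the fundamental domain $\mathcal D_\Gamma$ and so are not in $\mod H$, while each summand of $I_0^\Gamma(\Delta)$ has nonzero $K$-component and therefore is not of the form $(0,Y,0)$. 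Combining the last two observations, no summand of $\Sigma$ maps irreducibly into $E$, so in particular no arrow of the slice subquiver ends at $E$; that is, $E$ is a source of $\Sigma$.

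The only step requiring genuine care is the identification $\rad E=I_0^H(S)\in\mod H$ through the triple description (for which one uses that $K$ is semisimple and that the structure map of $E$ is onto its $H$-component); once this is in place, the conclusion is formal from the behaviour of irreducible maps into projectives together with the elementary remark that $\Sigma$ is disjoint from $\mod H$.
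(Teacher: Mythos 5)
Your proof is correct and takes essentially the same route as the paper: both arguments reduce to the computation that $\rad E$ is an injective $H$-module, so that (since $\mod H$ is closed under predecessors in $\mod\Gamma$) any map into $E$ from an indecomposable comes from $\mod H$, which is disjoint from $\Sigma$. The only cosmetic difference is that the paper factors an arbitrary nonzero non-isomorphism $X\rightarrow E$ through $\rad E$, whereas you restrict to irreducible maps via the minimal right almost split morphism $\rad E\hookrightarrow E$; either version suffices for $E$ to be a source of the slice.
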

\begin{proof} Let $P\in $ $\mathrm{ind}(I_{0}(\mathrm{soc}H))$ and let $
f:X\longrightarrow P$ be a nonzero non-isomorphism in  $\mathrm{ind}\Gamma .$
Then $\rm{Im} f\subseteq \mathrm{rad}P$, which is an injective $H-$module.
Thus $X\in \mod H,$ since $\mod H$ is closed under predecessors in $
\mod\Gamma .$ But then $X\notin \Sigma .$ 
\end{proof}
\bigskip 

\begin{lem}\label{lema A} Let $I$ be an indecomposable injective $\tilde{H}-$module, and let $M\longrightarrow I$ be a {minimal }right almost split morphism in $\mod \tilde H$. Then $
M\in \mathcal{T}_{D\Sigma }.$
\end{lem}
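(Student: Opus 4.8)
The plan is to use the split torsion pair $(\mathcal{T}_{D\Sigma},\mathcal{F}_{D\Sigma})$ in $\mod\tilde H$ — it is split because $\tilde H$ is hereditary — and to prove $M\in\mathcal{T}_{D\Sigma}$ by showing that no indecomposable summand of $M$ lies in the torsion-free class $\mathcal{F}_{D\Sigma}$. Two facts will be used repeatedly: every injective $\tilde H$-module lies in $\mathcal{T}_{D\Sigma}$ (since $\Ext^1_{\tilde H}(D\Sigma,-)$ vanishes on injectives), so in particular $I\in\mathcal{T}_{D\Sigma}$; and $\Hom_{\tilde H}(\mathcal{T}_{D\Sigma},\mathcal{F}_{D\Sigma})=0$, which forces $\mathcal{F}_{D\Sigma}$ to be closed under predecessors. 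Since $G(\Sigma)=D\tilde H$, I may write $I=G(\Sigma_j)$ for an indecomposable summand $\Sigma_j$ of the complete slice $\Sigma$, and I will split the argument according to whether $\Sigma_j$ is projective-injective.

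First I would treat the case where $\Sigma_j=\tau_\Gamma^{-1}I'$ for an indecomposable injective $H$-module $I'$, so that $\Sigma_j\notin\mod H$ (and the resulting $I$ is non-projective). Using $D\Sigma=G({}_\Gamma\Gamma)$ and that $G|_{\mathcal{F}}\colon\mathcal{F}\to\mathcal{T}_{D\Sigma}$ is full and faithful, I get $\Hom_{\tilde H}(I,D\Sigma)\cong\Hom_\Gamma(\Sigma_j,{}_\Gamma\Gamma)$. Now ${}_\Gamma\Gamma=H\oplus I_0^\Gamma(\Delta)$, and both summands kill this Hom: a nonzero map $\Sigma_j\to H$ would make $\Sigma_j$ a predecessor of an $H$-module, hence $\Sigma_j\in\mod H$, a contradiction; and a nonzero map $\Sigma_j\to I_0^\Gamma(\Delta)=I_0^\Gamma(\soc H)$ is impossible by the previous lemma, since every proper predecessor of $I_0^\Gamma(\soc H)$ lies in $\mod H$ while $\Sigma_j$ does not. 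Thus $\Hom_{\tilde H}(I,D\Sigma)=0$, and a fortiori $\underline{\Hom}_{\tilde H}(I,D\Sigma)=0$. The Auslander–Reiten formula, together with the equivalence $\tau\colon\underline{\mod}\,\tilde H\to\overline{\mod}\,\tilde H$, then gives $\Ext^1_{\tilde H}(D\Sigma,\tau_{\tilde H}I)\cong D\,\overline{\Hom}_{\tilde H}(\tau_{\tilde H}I,\tau_{\tilde H}D\Sigma)\cong D\,\underline{\Hom}_{\tilde H}(I,D\Sigma)=0$, so $\tau_{\tilde H}I\in\mathcal{T}_{D\Sigma}$. The almost split sequence $0\to\tau_{\tilde H}I\to M\to I\to0$ now exhibits $M$ as an extension of the two torsion modules $\tau_{\tilde H}I$ and $I$, whence $M\in\mathcal{T}_{D\Sigma}$ because a torsion class is closed under extensions.

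The remaining case, $\Sigma_j=I_0^\Gamma(\Delta)$ projective-injective (so $I=G(I_0^\Gamma(\Delta))$), is where the clean argument breaks down: here $\Hom_\Gamma(\Sigma_j,{}_\Gamma\Gamma)$ contains the identity, $\tau_{\tilde H}I$ need not be torsion, and the extension argument no longer applies. For this case I would transport the minimal right almost split morphism from $\mod\Gamma$: take the minimal right almost split morphism $E\to\Sigma_j$ in $\mod\Gamma$ (the radical inclusion, since $\Sigma_j$ is projective), note $E\in\mathcal{F}$ because $\mathcal{F}$ is closed under predecessors, and apply the exact equivalence $G|_{\mathcal{F}}$ to obtain $G(E)\to I$ with $G(E)\in\mathcal{T}_{D\Sigma}$, a morphism that is minimal right almost split \emph{within} $\mathcal{T}_{D\Sigma}$. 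It then suffices to promote this to a minimal right almost split morphism in all of $\mod\tilde H$, for then $M\cong G(E)\in\mathcal{T}_{D\Sigma}$.

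The main obstacle is exactly this promotion, i.e. excluding an irreducible morphism $N\to I$ with $N\in\mathcal{F}_{D\Sigma}$ indecomposable. I would analyse such a map through the Auslander–Reiten–Nakayama duality $\Hom_{\tilde H}(N,I)\cong D\,\Hom_{\tilde H}(\nu^{-1}I,N)$, which rewrites a nonzero map into the injective $I$ as a condition on a map out of the projective $\nu^{-1}I$, and then transport the data back across $G$ into $\mod\Gamma$; the required vanishing should again be supplied by the previous lemma (the projective-injective $\Gamma$-modules are sources of $\Sigma$) and by the predecessor-closedness of $\mod H$ in $\mod\Gamma$. I expect this interaction between the tilting torsion pair and the almost split sequences that straddle it to be the only genuine difficulty, the torsion-theoretic reductions in the first three steps being routine.
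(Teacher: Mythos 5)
Your first case is correct, and it is a genuinely different argument from the paper's: you verify directly, via $\Hom_{\tilde H}(I,D\Sigma)\cong\Hom_\Gamma(\Sigma_j,{}_\Gamma\Gamma)=0$ and the Auslander--Reiten formula, that $\tau_{\tilde H}I$ is torsion, and then use closure of $\mathcal T_{D\Sigma}$ under extensions. (The parenthetical claim that $I$ is non-projective, which you need for the almost split sequence to exist, is not justified but is easily repaired: a tilting module is faithful, hence sincere, so $\Hom_{\tilde H}(P,D\Sigma)\neq 0$ for every nonzero projective $P$, and your own vanishing $\Hom_{\tilde H}(I,D\Sigma)=0$ then forces $I$ non-projective.) The genuine gap is your second case, and your wording concedes it: ``I would analyse'', ``should again be supplied'', ``I expect'' is a plan, not a proof, and the statement is not established for $I=G(\Sigma_j)$ with $\Sigma_j$ projective-injective. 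Moreover, the plan as stated runs into a structural obstruction: after rewriting $\Hom_{\tilde H}(N,I)\cong \mathrm{D}\Hom_{\tilde H}(\nu^{-1}I,N)$ you propose to ``transport the data back across $G$'', but the offending module $N$ lies in $\mathcal F_{D\Sigma}$, which is disjoint from the image $\mathcal T_{D\Sigma}$ of $G|_{\mathcal F}$; torsion-free $\tilde H$-modules correspond to $\mathcal T\subset\mod\Gamma$ under $\Ext^1_{\tilde H}(D\Sigma,-)$, not under $G$, so neither the sources lemma nor predecessor-closedness of $\mod H$ in $\mod\Gamma$ can be applied to a $G$-preimage of $N$: it has none.

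What is missing is the pivot the paper uses, which also makes your case distinction unnecessary. Suppose an indecomposable summand $M'$ of $M$ is not in $\mathcal T_{D\Sigma}$. Then $M'$ is not injective (injectives are torsion), so the irreducible map $M'\to I$ yields an irreducible map $I\to\tau_{\tilde H}^{-1}M'$; since $\tilde H$ is hereditary, an irreducible map out of the injective $I$ is an epimorphism, so $\tau_{\tilde H}^{-1}M'$ is injective, hence lies in $\mathcal T_{D\Sigma}$ --- that is, precisely inside the image of $G|_{\mathcal F}$. Since $\tau(\tau_{\tilde H}^{-1}M')=M'\notin\mathcal T_{D\Sigma}$, the module $\tau_{\tilde H}^{-1}M'$ is Ext-projective in $\mathcal T_{D\Sigma}$, so $\tau_{\tilde H}^{-1}M'=G(N')$ with $N'$ Ext-projective in $\mathcal F$, hence projective in $\mod\Gamma$ (as $\mathcal F$ is closed under predecessors), and $N'\in\add\Sigma$ because $G(\Sigma)=\mathrm{D}\tilde H$. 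The only projective summands of $\Sigma=\tau_\Gamma^{-1}\mathrm{D}H\oplus I_0^\Gamma(\Delta)$ are the projective-injective ones, and by the preceding lemma these are sources of the slice; applying $G$, the module $\tau_{\tilde H}^{-1}M'$ is a source among the injective $\tilde H$-modules, contradicting the irreducible map $I\to\tau_{\tilde H}^{-1}M'$. Replacing your $N$ by $\tau_{\tilde H}^{-1}N$ \emph{before} transporting across $G$ --- exploiting hereditariness of $\tilde H$, which your sketch never uses at this point --- is exactly the missing idea; with it the argument is uniform in $I$, and your (correct) first case becomes superfluous.
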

\begin{proof} Let $M^{\prime }$ be an indecomposable direct summand of $
M$, and assume $M^{\prime }\notin \mathcal{T}_{D\Sigma } .$ Then $M^{\prime }$ is not
injective and there is an irreducible morphism $I\longrightarrow \tau
^{-1}M^{\prime }.$ Since $\tilde{H}$ is hereditary, then $\tau
^{-1}M^{\prime }$ is injective. Thus $\tau ^{-1}M^{\prime }\in \mathcal T_{D\Sigma }$, and since $M'\notin \mathcal T_{D\Sigma }$ we conclude that  $\tau ^{-1}M^{\prime }$ is
Ext-projective in $\mathcal{T}_{D\Sigma }.$ Therefore, there exists $N\in \ind
\Gamma $ such that $N$ is Ext-projective in $\mathcal{F}$ and $G
N= \tau ^{-1}M^{\prime }.$ Since $\mathcal{F}$ is closed under predecessors, then $N$
is projective in $\mod\Gamma .$ But we also have $N\in \add \Sigma $,
because $G $ maps $\Sigma $ to $\mathrm{D}\tilde{H},$ \IP{as we observed before Example \ref{E2}}. Since $\tau
^{-1}\mathrm{D}H$ contains no nonzero projective direct summands, then $N\in \mathrm{
ind}(I_{0}(\mathrm{soc}H)),$ i.e. $N$ is projective-injective. By the preceding lemma, $N
$ is a source in $\Sigma .$ Thus  $\tau ^{-1}M^{\prime }=G N$ is a source in $%
\mathrm{ind}(\mathrm{D}\tilde{H})$, which contradicts the already
established existence of an irreducible morphism $I\longrightarrow \tau
^{-1}M^{\prime }.$
\end{proof}
\bigskip 

\begin{prop}\label{C} Let $I$ be an indecomposable injective $H-$module.
Then $G \tau_\Gamma ^{-1}I=\tau _{\tilde H}^{-1}G I.$
\end{prop}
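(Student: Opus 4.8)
The plan is to transfer the almost split sequence ending at $\tau_\Gamma^{-1}I$ across the equivalence $G|_{\mathcal F}$ and to recognize its image as the almost split sequence ending at $G\tau_\Gamma^{-1}I$ in $\mod\tilde H$. Write $J=\tau_\Gamma^{-1}I$. First I would record the two features that make $J$ special. Since $I$ is an indecomposable injective $H$-module it is a summand of $DH$, so $J$ is a summand of $\tau_\Gamma^{-1}DH$, hence of the slice $\Sigma$; as $\Sigma\subseteq\mathcal F$ (its summands have projective dimension at most one) we get $J\in\mathcal F$, and since $G(\Sigma)=D\tilde H$ as observed before Example \ref{E2}, the module $GJ$ is an indecomposable injective $\tilde H$-module. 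Also $I\in\mod H\subseteq\mathcal F$, and $J$ is non-projective because $\tau_\Gamma^{-1}DH$ has no projective summands, so there is an almost split sequence $0\to I\to E\xrightarrow{p} J\to 0$ in $\mod\Gamma$ with $\tau_\Gamma J=I$.

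Next I would apply $G$. Since $\mathcal F$ is closed under predecessors, every indecomposable summand of the middle term $E$ is a predecessor of $J\in\mathcal F$ and hence lies in $\mathcal F$; thus all three terms of the sequence lie in $\mathcal F$. Because $G|_{\mathcal F}$ is exact and an equivalence onto $\mathcal T_{D\Sigma}$, applying $G$ yields a short exact sequence $0\to GI\to GE\xrightarrow{Gp} GJ\to 0$ in $\mod\tilde H$. The whole proof then reduces to showing that $Gp$ is minimal right almost split: once this is known, $Gp$ is a right almost split epimorphism onto the non-projective indecomposable $GJ$, so its kernel is $\tau_{\tilde H}GJ$, giving $\tau_{\tilde H}GJ=GI$, which is exactly $G\tau_\Gamma^{-1}I=\tau_{\tilde H}^{-1}GI$.

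To identify $Gp$ as minimal right almost split I would start instead from the minimal right almost split map $q\colon M\to GJ$ in $\mod\tilde H$, which exists since $GJ$ is injective, hence indecomposable non-projective. By Lemma \ref{lema A} we have $M\in\mathcal T_{D\Sigma}$, so using that $G|_{\mathcal F}$ is a dense, full, faithful functor onto $\mathcal T_{D\Sigma}$ I can write $M=GM_0$ and $q=Gq_0$ for a map $q_0\colon M_0\to J$ in $\mod\Gamma$ with $M_0\in\mathcal F$. Since $J\in\mathcal F$ and $Gq_0=q$ is minimal right almost split, Lemma \ref{B} shows $q_0$ is minimal right almost split in $\mod\Gamma$. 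By uniqueness of minimal right almost split maps into $J$ we get $q_0\cong p$, and applying $G$ gives $Gp\cong q$, so $Gp$ is minimal right almost split as required.

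The main obstacle is exactly this last identification: guaranteeing that the abstract image $Gp$ is genuinely the minimal right almost split map in the ambient category $\mod\tilde H$, rather than merely an irreducible or right almost split map inside the subcategory $\mathcal T_{D\Sigma}$. This is where Lemmas \ref{lema A} and \ref{B} do the essential work, the former pulling $q$ back into $\mathcal F$ and the latter pushing minimal-right-almost-splitness across $G$; the remaining steps are bookkeeping with the exactness of $G|_{\mathcal F}$, the uniqueness of almost split sequences, and the fact that a right almost split epimorphism onto a non-projective indecomposable has kernel equal to its Auslander--Reiten translate.
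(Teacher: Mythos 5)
Your proof is correct and takes essentially the same route as the paper: both arguments transfer the minimal right almost split map ending at $G\tau_\Gamma^{-1}I$ across the equivalence $G|_{\mathcal F}:\mathcal F\to\mathcal T_{D\Sigma}$ using Lemma \ref{lema A} (to place its source in $\mathcal T_{D\Sigma}$) and Lemma \ref{B} (to recognize the pullback as minimal right almost split in $\mod\Gamma$), and then conclude that the $G$-image of the almost split sequence $0\to I\to E\to\tau_\Gamma^{-1}I\to 0$, which lies in $\mathcal F$, is almost split in $\mod\tilde H$. One minor quibble: your aside that $G\tau_\Gamma^{-1}I$ is ``injective, hence non-projective'' is not a valid implication over a hereditary algebra, but it is harmless here, since minimal right almost split maps exist for every indecomposable, and the non-projectivity of $G\tau_\Gamma^{-1}I$ follows a posteriori from the surjectivity of $Gp\cong q$.
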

\begin{proof} Since $\tau_\Gamma^{-1}I\in \Sigma $, then $G \tau_\Gamma ^{-1}I$
is $\tilde{H}-$injective. Let $f:M\longrightarrow G \tau_\Gamma ^{-1}I$ be
a minimal right almost split morphism. By Lemma \ref{lema A}, {we have} $M\in \mathcal{T}_{D\Sigma }.$ Then
there is a morphism $g:N\longrightarrow \tau_\Gamma ^{-1}I$ in $\mod \Gamma$ with $N\in \mathcal{F}$%
, such that $f=G g.$ By Lemma \ref{B}, $g$ is minimal right almost split in $\mod\Gamma .$ Then the almost split sequence $0\longrightarrow
I\longrightarrow N\overset{g}{\longrightarrow }\tau_\Gamma ^{-1}I\longrightarrow 0$
is contained in $\mathcal{F}$, and applying $G $ we obtain an exact
sequence $(\ast )\qquad 0\longrightarrow G I\longrightarrow M\overset{f}%
{\longrightarrow }G \tau_\Gamma ^{-1}I\longrightarrow 0.$

Since $f$ is minimal right almost split, then the sequence $(\ast )$ is
almost split. Hence $G I=\tau_{\tilde H} G\tau_\Gamma ^{-1}I,$ and the result
follows { by } applying $\tau_{\tilde H} ^{-1}$. 
\end{proof}

\begin{prop}
Let $\hat T$ be a cluster tilting object in $C_H$ represented by $T$ in the fundamental domain $\mathcal D_\Gamma$ of $C_H$, which we consider embedded in $\mod \Gamma$ { as before}. Let $T_1$, $T_2 $ be \IP{indecomposable summands of $T$.} Then {\rm top} $ 
\HomD(F^{-1}T_1,T_2) $ is a vector space with basis given by a minimal set of relations from $T_2$ to $T_1$  in  $ \add (T \oplus I_0^\Gamma(\Delta))$.
\end{prop}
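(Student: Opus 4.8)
The plan is to transport the computation to the hereditary algebra $\tilde H$, where by Proposition \ref{16} the functor $G$ carries the tilting $\Gamma$-module $\theta(T)=T\oplus I_0^\Gamma(\Delta)$ to a genuine tilting $\tilde H$-module $\tilde T=G\theta(T)$, then apply the result of \cite{ABS} for tilting modules over a hereditary algebra, and finally carry the resulting relations back to $\mod\Gamma$ through the equivalence $G|_{\mathcal F}\colon\mathcal F\to\mathcal T_{D\Sigma}$. Write $\tilde B=\End_{\tilde H}(\tilde T)$. Since $\add\theta(T)\subseteq\mathcal F$ (because $pd\,\theta(T)\leq 1$) and $G|_{\mathcal F}$ is an exact equivalence onto $\mathcal T_{D\Sigma}$, it identifies $\tilde B$ with $\End_\Gamma(\theta(T))$ and sets up a bijection, preserving composition and irreducibility in $\add$, between the relations in $\add\theta(T)=\add(T\oplus I_0^\Gamma(\Delta))$ and the relations in $\add\tilde T$.

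The first main step is to establish, for $X,Y$ in the fundamental domain $\mathcal D_\Gamma$, a natural isomorphism
$$\HomD(F^{-1}X,Y)\;\cong\;\Hom_{\Der(\tilde H)}(\tilde F^{-1}GX,\,GY),$$
compatible with the actions of the endomorphism algebras (so that the two tops correspond), where $\tilde F=\tau^{-1}[1]$ on $\Der(\tilde H)$ and $GX,GY$ are the $\tilde H$-modules obtained from the corresponding $\Gamma$-modules. As $\hat G$ is a fully faithful exact embedding commuting with the shift, it suffices to identify $\hat G(F^{-1}X)$ with $\tilde F^{-1}(GX)$. Here Proposition \ref{C} is the crucial input: for a shift object $X=\tau_\Gamma^{-1}I$ (which equals $P[1]$ in $\Der(H)$, with $I=\nu P$) a direct computation gives $F^{-1}X=I[-1]$, and since $G\tau_\Gamma^{-1}I=\tau_{\tilde H}^{-1}GI$ one gets $\tilde F^{-1}(GX)=GI[-1]=\hat G(F^{-1}X)$. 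For a module object $X\in\mod H$ one must instead show $\tau_{\tilde H}\,GX\cong G\tau_\Gamma X$; the point is that the almost split sequence $0\to\tau_\Gamma X\to E\to X\to 0$ ending at $X$ has all its terms in $\mod H\subseteq\mathcal F$ (since $\mod H$ is closed under predecessors), so $G$ takes it to an exact sequence in $\mathcal T_{D\Sigma}$. I expect \emph{this} to be the main obstacle: $G$ does not commute with $\tau$ in general (as noted before Lemma \ref{B}), and one must verify that the transported sequence is still almost split in $\mod\tilde H$. The only possible failure comes from nonzero maps into $GX$ out of the torsion-free class $\mathcal F_{D\Sigma}$, and ruling these out (or showing they do not affect the top) should be handled by a case analysis in the spirit of Lemma \ref{B} and Lemma \ref{lema A}, using that $\mathcal F_{D\Sigma}$ is the small class corresponding to the $pd=2$ modules of Corollary \ref{2.5}.

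Granting this isomorphism, the proof concludes quickly. Applying it with $X=T_1$, $Y=T_2$ yields $\HomD(F^{-1}T_1,T_2)\cong\Hom_{\Der(\tilde H)}(\tilde F^{-1}GT_1,GT_2)$ as modules over the endomorphism algebras, so their tops agree. Since $\tilde T=G\theta(T)$ is a tilting module over the hereditary algebra $\tilde H$, the argument of \cite{ABS} applies verbatim:
$$\Hom_{\Der(\tilde H)}(\tilde F^{-1}\tilde T,\tilde T)\;\cong\;\Ext^2_{\tilde B}(\dual\tilde B,\tilde B),$$
and the top of this $\tilde B$-$\tilde B$-bimodule has as basis a minimal set of relations of $\tilde B$, that is, of relations in $\add\tilde T$. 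Restricting to the summands $GT_1,GT_2$, the top of $\Hom_{\Der(\tilde H)}(\tilde F^{-1}GT_1,GT_2)$ has a basis given by a minimal set of relations from $GT_2$ to $GT_1$ in $\add\tilde T$. Transporting these back along the equivalence $G|_{\mathcal F}$ as in the first paragraph, they become exactly a minimal set of relations from $T_2$ to $T_1$ in $\add(T\oplus I_0^\Gamma(\Delta))$, which is the assertion. The projective--injective summands $I_0^\Gamma(\Delta)$ enter precisely because the relevant tilting module is $\theta(T)=T\oplus I_0^\Gamma(\Delta)$ and not $T$ alone.
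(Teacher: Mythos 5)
Your global architecture is the paper's (transport through $G$, Proposition \ref{16}, the theorem of \cite{ABS} over the hereditary algebra $\tilde H$, then transport the relations back through the equivalence $G|_{\add(T\oplus I_0^\Gamma(\Delta))}$), but there is a genuine gap at exactly the point you flag and then defer. Your proof rests on the uniform identification $\hat G(F^{-1}X)\cong \tilde F^{-1}(GX)$ for \emph{all} $X$ in $\mathcal D_\Gamma$, and for module objects $X\in\mod H$ this requires $\tau_{\tilde H}(GX)\cong G(\tau_\Gamma X)$, which you leave to an unexecuted ``case analysis in the spirit of Lemma \ref{B} and Lemma \ref{lema A}.'' That step is not merely unproven; it is false in general. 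If $X$ is a projective $H$-module (certainly allowed as a summand of $T$, e.g.\ for $T=H$), then $F^{-1}X=\tau_{\Der(H)}X[-1]=\nu X[-2]$, so $\hat G(F^{-1}X)=G(\nu X)[-2]$ is a stalk complex in degree $2$; but $GX$ is a summand of the tilting module $D\Sigma=G(\Gamma)$, which is typically \emph{not} projective over $\tilde H$ (in Example \ref{E2} the summand $\begin{smallmatrix}2\ 3\\4'\\4\end{smallmatrix}$ of $D\Sigma$ is non-projective), so $\tilde F^{-1}(GX)=(\tau_{\tilde H}GX)[-1]$ is a nonzero stalk complex in degree $1$ --- the two objects cannot be isomorphic. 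Even for non-projective $X\in\mod H$ your repair fails: $G$ does carry the almost split sequence ending at $X$ to an exact sequence in $\mathcal T_{D\Sigma}$, but modules of $\mathcal F_{D\Sigma}$ may map to $GX$ without factoring through the middle term, and Lemma \ref{lema A} excludes this only when $GX$ is injective, i.e.\ precisely when $X\in\add\Sigma$, i.e.\ $X=\tau_\Gamma^{-1}I$ --- which is the one case Proposition \ref{C} actually proves; the paper explicitly warns, just before Lemma \ref{B}, that $\tau(G(X))\not\cong G(\tau X)$ in general.

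The correct move, which is the paper's, is to make the comparison unnecessary for module objects by a preliminary reduction. If $T_1,T_2\in\mod H$, the statement is the result of \cite{ABS} applied over $H$ itself, with no passage to $\tilde H$ (and since $\mod H$ is closed under predecessors, no path between $H$-module summands can pass through $I_0^\Gamma(\Delta)$ or any non-$H$ summand of $T$, so relations in $\add(T\oplus I_0^\Gamma(\Delta))$ agree with relations in the $H$-part of $\add T$). If $T_1\in\mod H$ and $T_2\notin\mod H$, then $T_2=P_i[1]$ and $\HomD(F^{-1}T_1,T_2)=0$, matching the absence of relations. The only case that needs $\tilde H$ is $T_1=\tau_\Gamma^{-1}I\notin\mod H$, where $F^{-1}T_1=I[-1]$; there one uses only full faithfulness of $\hat G$ (no $\tau$-commutation in the second argument) together with the single identity $G\tau_\Gamma^{-1}I=\tau_{\tilde H}^{-1}GI$ of Proposition \ref{C} in the first argument, after which your concluding paragraph (ABS over $\tilde H$ applied to the tilting module $G(T\oplus I_0^\Gamma(\Delta))$, then the equivalence on $\add(T\oplus I_0^\Gamma(\Delta))$) goes through verbatim. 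So your endgame is sound, but the bridge you rely on should be avoided rather than proved --- as stated, it cannot be proved.
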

\begin{proof}
As we observed above, the result holds for summands $T_1, T_2$ of $T$ which are $H$-modules. So we only need to consider the case when $T_1 \notin \mod H$, that is, $T_1 = \tau^{-1}_\Gamma I$, {where} $I$ is an indecomposable injective module in $\mod H$. For if $T_1 \in \mod  H$ and $T_2 \notin \mod H$, we have $\Hom(\tau T_1[-1],T_2)=0$ since $T_2=P_i[1]$ for $P_i$ indecomposable projective \cite{ABST}.

Then $\tau^{-1}_\Gamma I = \tau^{-1}_{\Der(H)} I = P[1]$ in $\Der(H)$, where top$P= $ soc$I${, via our identification}. Then, for $X\in \mod H$ we have $$\HomD (F^{-1}(\tau^{-1}I),X) = \HomD ( I[-1],X) \simeq \Hom_{\Der(\tilde H)} ( \hat G (I[-1]),\hat GX)= $$  $$\Hom_{\Der(\tilde H)} ( (G I)[-1],GX)= \Hom_{\Der(\tilde H)} ( F^{-1} \tau^{-1}(G I), GX).$$
From Proposition \ref{C} we know that $G \tau ^{-1}I=\tau ^{-1}G I.$  Thus $$\HomD (F^{-1}(\tau^{-1}I),X) \simeq \Hom_{\Der(\tilde H)} (( F^{-1} G (\tau^{-1} I)),  GX).$$

Using Proposition \ref{C} again we can prove that this isomorphism induces an isomorphism between the corresponding tops.

Recall that $T\oplus I^\Gamma_0(\Delta)$ is a tilting $\Gamma$-module (Theorem  \ref{14}) and therefore $G(T\oplus I^\Gamma_0(\Delta) )$ is a tilting $\tilde H$-module (Proposition \ref{16}) . 

Now consider  $X=T_2$. Since both $G (\tau_\Gamma^{-1} I)=G(T_1)  $ and $  G(T_2)$ are modules over the hereditary algebra $\tilde H$,  we can apply \cite{ABST} to the tilting module  $G(T\oplus I^\Gamma_0(\Delta) )$  and conclude that
 top\,$\Hom_{\Der(\tilde H)} (( F^{-1} G (T_1),  GT_2)$ has a basis in correspondence with a minimal set of   relations from $G(T_2) $ to  $G(T_1)$in $\add G(T\oplus I^\Gamma_0(\Delta))$. 
Since $G|_{\add (T\oplus I^\Gamma_0(\Delta))}$ is an equivalence of categories, we obtain minimal relations as stated.
\end{proof}

Let $T$ and $\hat T$ be as in the previous proposition. We are now in {the} position to describe the ordinary quiver $Q_C$ of the cluster-tilted algebra $C= \End_{\mathcal C_H}(\hat T)$, in terms of $\mod \Gamma$. 

\begin{thm}
Let $C = \End_{\mathcal C_H }(\hat T)$, where $\hat T$ is a basic cluster-tilting object in $C_H$ represented by $T= \bigoplus T_i$ in $\mod\Gamma$, with $T_i$ indecomposable. Let $B= \MI{{\utilde\End}_\Gamma} ( T)$, and let  $i$ denote the vertex of $Q_C$ associated to $\MI{\utilde{\Hom}}_\Gamma(T,T_i)$. Then, for vertices $i, j$ of $Q_C$ the number of arrows from $i$ to $j$ is equal to the number of arrows from $i$ to $j$ in $Q_B$ plus the {cardinality} of a minimal set of 
  relations from $T_i$ to $T_j$  in  $ \add (T \oplus I_0^\Gamma(\Delta))\subset \mod \Gamma$.

\begin{proof}
The number of arrows from $i$ to $j$ equals dim top $\Hom_{\mathcal C_H}(T_j,T_i)= {\rm dim \, top} \HomD(T_j,T_i) \oplus {\rm dim \, top} \HomD(F^{-1}T_j,T_i).$ Now the result follows {from }the previous proposition and the fact that ${\rm dim \, top} \HomD(T_j,T_i)$ is equal to the number of arrows from $i$ to $j$ in $Q_B$, \MI{because $\HomD(T_j,T_i) \simeq \utilde{\Hom}_\Gamma (T_j,T_i)$, by Proposition \ref{IR2} (b).}
\end{proof}

\begin{remark}
In the above statement, for each pair of vertices $i$ and $j$, only one of the summands describing the number of arrows from $i$ to $j$ is nonzero. 

\end{remark}

\end{thm}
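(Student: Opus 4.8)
The plan is to read the arrows of $Q_C$ off the dimension of the top of a morphism space in $\mathcal C_H$, split that space by the orbit-category structure, and recognize the two resulting pieces as the two summands in the claimed formula.

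First I would use the standard description of the quiver of a basic algebra: since $C=\End_{\mathcal C_H}(\hat T)$ is basic with vertices indexed by the indecomposable summands $T_i$, the number of arrows from $i$ to $j$ equals $\dim_k\operatorname{top}\Hom_{\mathcal C_H}(T_j,T_i)$, where $\operatorname{top}$ denotes the quotient $\rad/\rad^2$ by the radical of the category $\add\hat T$, i.e. the space of irreducible maps $T_j\to T_i$ in $\add\hat T$. I would then invoke the decomposition $\Hom_{\mathcal C_H}(T_j,T_i)=\HomD(T_j,T_i)\oplus\HomD(F^{-1}T_j,T_i)$ arising from $\mathcal C_H=\Der(H)/F$ with $F=\tau^{-1}[1]$, as recalled earlier from \cite{BIRS}.

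Next I would argue that $\operatorname{top}$ is additive along this decomposition. The point is that composition in $\mathcal C_H$ is graded by the power of $F$ coming from the covering $\Der(H)\to\mathcal C_H$, and that between objects of the fundamental domain only the two components displayed above are nonzero. Hence $\rad$ and $\rad^2$ respect this grading, the potentially mixing terms in $\rad^2$ vanish, and the radical-square filtration splits along the decomposition, giving $\dim\operatorname{top}\Hom_{\mathcal C_H}(T_j,T_i)=\dim\operatorname{top}\HomD(T_j,T_i)+\dim\operatorname{top}\HomD(F^{-1}T_j,T_i)$. This same gradedness yields the Remark, since for each fixed pair only one of the two summands can support irreducible maps.

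Finally I would identify the two contributions. By Proposition \ref{IR2}(b) we have $\HomD(T_j,T_i)\simeq\utilde{\Hom}_\Gamma(T_j,T_i)$, with composition matching that of $\utilde{\End}_\Gamma(T)=B$, so $\dim\operatorname{top}\HomD(T_j,T_i)$ is precisely the number of arrows from $i$ to $j$ in $Q_B$. For the second term, the preceding proposition (with the roles of its $T_1,T_2$ played by $T_j,T_i$) identifies $\operatorname{top}\HomD(F^{-1}T_j,T_i)$ with a minimal set of relations from $T_i$ to $T_j$ in $\add(T\oplus I_0^\Gamma(\Delta))$, so its dimension is the cardinality of such a set; adding the two gives the theorem. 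The step I expect to require most care is the additivity of $\operatorname{top}$: one must check that no element of $\rad^2$ mixes the degree-zero and degree-$(-1)$ parts, which amounts to verifying that the $F$-shifted morphism spaces between fundamental-domain objects that would cause such mixing are zero — the very input that also makes the radical of $B$ agree with the degree-zero radical computed inside $\mathcal C_H$.
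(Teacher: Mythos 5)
Your proposal is correct and follows essentially the same route as the paper: decompose $\Hom_{\mathcal C_H}(T_j,T_i)$ into the degree-$0$ and degree-$(-1)$ pieces, identify the first top with the arrows of $Q_B$ via Proposition \ref{IR2}(b), and the second with a minimal set of relations via the preceding proposition. Your only addition is an explicit grading argument justifying that $\operatorname{top}$ is additive along the decomposition (which also yields the Remark) --- a point the paper's proof leaves implicit, and which you verify correctly since the degree-$(+1)$ morphism spaces between fundamental-domain objects vanish.
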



\begin{exmp}

For the hereditary algebra $H$ given below we indicate the corresponding algebra  $\Gamma$.
\begin{align*}
&\begin{xy}<0.60cm,0cm>:
0*+[o]{\begin{smallmatrix}\ 3\end{smallmatrix}}="a"; 
(1,1)*+[o]{\begin{smallmatrix}\ 2\end{smallmatrix}}="e";
(-1,1)*+[o]{\begin{smallmatrix}\ 1\end{smallmatrix}}="f";
(-2.5,1)*+[o]{ H :}
\ar@{->}  "f";"a"
\ar@{->}  "e";"a"
\end{xy}
&&   
\begin{xy}<0.60cm,0cm>:
0*+[o]{\begin{smallmatrix}\ 3\end{smallmatrix}}="a"; 
(1,1)*+[o]{\begin{smallmatrix}\ 2\end{smallmatrix}}="e";
(-1,1)*+[o]{\begin{smallmatrix}\ 1\end{smallmatrix}}="f";
(0,2)*+[o]{\begin{smallmatrix}\ 3'\end{smallmatrix}}="g";
(-2.5,1.1)*+[o]{  \Gamma :}
\ar@{->} "g"; "f"
\ar@{->} "g"; "e"
\ar@{->}  "f";"a"
\ar@{->}  "e";"a"
\ar@{.}  "g";"a"
\end{xy}
\end{align*}
\vskip .1in
Then the AR-quiver of $\Gamma$ is:
{\footnotesize{$$
\begin{xy}<0.60cm,0cm>:
0*+[o]{3}="a";
(2,1)*+[o]{\begin{smallmatrix}\ 1\ \\3\end{smallmatrix}}="d";
(2,-1)*+[o]{\begin{smallmatrix}\ 2\ \\3\end{smallmatrix}}="e"*\frm{-};
(6,2.5)*+[o]{\begin{smallmatrix}3'\\1\ 2\ \\3\ \end{smallmatrix}}="f";
(4,0)*+[o]{\begin{smallmatrix}\ 1\ 2\ \\3 \end{smallmatrix}}="g";
(6,1)*+[o]{2}="i"*\frm{-};
(6,-1)*+[o]{1}="j";
(8,0)*+[o]{\begin{smallmatrix}3'\\\ 1\ 2\ \end{smallmatrix}}="l";
(10,1)*+[o]{\begin{smallmatrix}3'\\\ 1\ \end{smallmatrix}}="n";
(10,-1)*+[o]{\begin{smallmatrix}3'\\\  2\ \end{smallmatrix}}="p"*\frm{-};
(12,0)*+[o]{3'}="r";
 (13,-2)*+[o]{.}="v",
(10,1.7)*+[o]{.}="p1";(10,1.7)*+[o]{.}="p2",**\crv{(2,1.7)&(1.5,1.7)&(-2,0)&(1.5,-1.7)&(2,-2)&(10,-1.7)&(10.8,-1.7)&(11.5,-1)&(10.5,0)&(11.5,1)&(10.8,1.7)},

\ar@{->}  "a";"d"
\ar@{->}  "a";"e"
\ar@{->}  "e";"g"
\ar@{->}  "g";"f"
\ar@{->}  "d";"g"
\ar@{->}  "f";"l"
\ar@{->}  "g";"i"
\ar@{->}  "g";"j"
\ar@{->}  "i";"l"
\ar@{->}  "j";"l"
\ar@{->}  "l";"n"
\ar@{->}  "l";"p"
\ar@{->}  "n";"r"
\ar@{->}  "p";"r"
\end{xy}
$$ }}
\noindent  
and the fundamental domain of $\mathcal C_H$ corresponds to the region enclosed  by the curve. Let $T =\begin{smallmatrix}2\\3\end{smallmatrix}\oplus 2 \oplus \begin{smallmatrix}3'\\2\end{smallmatrix}$. Then $T\oplus {\begin{smallmatrix}3'\\1\ 2\ \\3\ \end{smallmatrix}} $ is a tilting $\Gamma$-module, so $T$ defines a  cluster tilting object $\overline T$ in $\mathcal C_H$.

We notice that nonzero maps
$\begin{smallmatrix}2\\3\end{smallmatrix} \rightarrow \begin{smallmatrix}3'\\1\ 2\\3\end{smallmatrix} \rightarrow \begin{smallmatrix}3'\\2\end{smallmatrix}$
, or
$\begin{smallmatrix}2\\3\end{smallmatrix} \rightarrow  2 \rightarrow \begin{smallmatrix}3'\\2\end{smallmatrix}$ have always nonzero composition.
However, there are nonzero maps 
$\begin{smallmatrix}2\\3\end{smallmatrix} \rightarrow \begin{smallmatrix}3'\\1\ 2\\3\end{smallmatrix} \oplus 2 \rightarrow \begin{smallmatrix}3'\\2\end{smallmatrix}$ 
with zero composition, and this relation from  $\begin{smallmatrix}2\\3\end{smallmatrix}$  to $\begin{smallmatrix}3'\\2\end{smallmatrix}$ is unique, up to scalar multiples. Therefore $\dim \Hom_{\mathcal C_H}(\begin{smallmatrix}3'\\2\end{smallmatrix},\begin{smallmatrix}2\\3\end{smallmatrix}) =1$.
\vskip .1in

Since $\dim \Hom_\Gamma(\begin{smallmatrix}2\\3\end{smallmatrix},2) =1$, and $\dim \Hom_\Gamma(2,\begin{smallmatrix}3'\\2\end{smallmatrix}) =1$, the ordinary quiver of the cluster tilted algebra $\End_{\mathcal C_H}(\overline T)$  is
$$\begin{xy}<0.60cm,0cm>:
0*+[o]{\begin{smallmatrix}\ 3\end{smallmatrix}}="g"; 
(1,1)*+[o]{\begin{smallmatrix}\ 2\end{smallmatrix}}="e";
(-1,1)*+[o]{\begin{smallmatrix}\ 1\end{smallmatrix}}="f";
(2,0)*+[o]{.}
\ar@{->} "g"; "f"
\ar@{->} "e"; "g"
\ar@{->}  "e";"f"
\end{xy}$$

\end{exmp}
 \bigskip

{\bf Acknowledgement.} This work started during a visit of the third author    in Bah\'ia Blanca in 2005, and she would like to thank her coauthors and the other members of the algebra group for their hospitality. The second author visited Trondheim in 2009 and she would like to thank the hospitality of the members of the representation theory group.

\medskip

\end{document}